\documentclass[a4paper,english,11pt]{article}
\usepackage[margin=24mm]{geometry}

\usepackage{inputenc}
\usepackage{amsmath,amssymb,amsthm}
\usepackage{xcolor}
\usepackage{color}
\usepackage{babel}
\usepackage{graphicx} 
\usepackage{tikz}
\usepackage{stmaryrd}

\usepackage{mathtools}

\usepackage{physics} 
\usepackage{enumerate}
\usepackage{enumitem}
\usepackage{mathrsfs,cancel}
\usepackage[font=small]{caption}
\usepackage[font=small]{subcaption}
\allowdisplaybreaks

\newcommand{\weakstar}{\overset{*}{\rightharpoonup}}

\newtheorem{theorem}{Theorem}[section]

\newtheorem{proposition}[theorem]{Proposition}
\newtheorem{lemma}[theorem]{Lemma}

\newtheorem{definition}[theorem]{Definition}
\newtheorem{remark}[theorem]{Remark}

\newtheorem*{theorem*}{Theorem}

\allowdisplaybreaks

\numberwithin{equation}{section}

\newcommand{\supp}{\operatorname{supp}} 
\newcommand{\divv}{\operatorname{div}}

\newcommand{\rel}{\mathrm{rel}}
\newcommand{\loc}{\mathrm{loc}}
\newcommand{\Om}{\Omega}
\newcommand{\ve}{\varepsilon}
\newcommand{\sm}{\sigma}

\newcommand{\vp}{\varphi}
\newcommand{\ol}{\overline}
\newcommand{\ul}{\underline}
\newcommand{\wt}{\widetilde}
\newcommand{\wh}{\widehat}
\newcommand{\uref}{u^{\textrm{ref}}}

\newcommand{\la}{\langle}
\newcommand{\ra}{\rangle}
\newcommand{\pa}{\partial}

\newcommand{\DC}{A}
\newcommand{\RFL}{\operatorname{\mathfrak{R}}}
\newcommand{\BIL}{\vartheta}
\newcommand{\PP}{P}

\newcommand{\ee}{\mathrm{e}}

\newcommand{\intOsm}{\int_{\Omega_{\sm}}}

\newcommand{\Osm}{\Omega_{\sigma}}
\newcommand{\Osmm}{\Omega_{-\sigma}}
\newcommand{\dxdt}{\,\dd x\dd t}
\newcommand{\dxds}{\,\dd x\dd s}
\newcommand{\dSdt}{\,\dd \mathcal{H}^{d-1}\dd t}
\newcommand{\dSds}{\,\dd \mathcal{H}^{d-1}\dd s}

\newcommand{\frakP}{\mathfrak{P}}
\newcommand{\bfs}{{\bf s}}
\newcommand{\XIH}{\chi}

\makeatletter
\@namedef{subjclassname@2020}{\textup{2020} Mathematics Subject Classification}
\makeatother

\usepackage{fancyhdr}
\usepackage{pdfpages}
\usepackage{datetime2}
\newcommand{\bfu}{\boldsymbol{u}}
\newcommand{\bfv}{\boldsymbol{v}}
\newcommand{\bfU}{\boldsymbol{U}}
\renewcommand{\bra}[1]{\left(#1\right)}

\newcommand{\eps}{\varepsilon}
\newcommand{\bff}{\boldsymbol}

\title{Renormalised solutions to reaction--diffusion systems with interface conditions: 
Global existence and weak--strong uniqueness} 
\author{Katharina Hopf\thanks{Weierstrass Institute for Applied Analysis and Stochastics, Berlin, Germany} \text{ }and Bao Quoc Tang\thanks{Universität Graz, Institut für Mathematik und Wissenschaftliches Rechnen, Graz, Austria}}
    
    \usepackage[backend=biber, style=alphabetic, giveninits, maxnames=6, maxalphanames=4,url=false,isbn=false]{biblatex}
    \usepackage{csquotes}
    
    \usepackage[colorlinks, linkcolor=blue,citecolor=blue!50!cyan,urlcolor=teal]{hyperref}

\newcommand{\revcite}[1]{\citeauthor{#1}~\cite{#1}}

\providecommand{\keywords}[1]
{\small\text{\textit{Keywords and phrases: }} #1
}

\begin{document}
	
	\maketitle
	
	\begin{abstract}
	We introduce an extension of the concept of renormalised solutions for entropy-dissipating reaction--diffusion systems due to J.\ Fischer {\itshape (Arch.\;Ration.\;Mech.\;Anal.\;218, 2015)} to systems coupled by nonlinear interface conditions.
	For this notion of solution, we establish global existence as well as a weak--strong stability estimate. Our framework allows to handle entropy-dissipating interfacial transmission rates without growth restrictions, including power-law nonlinearities as arising in the thermodynamic modelling of dissipative bulk--interface systems via generalised gradient structures.
	Our analysis relies on suitable extensions of the species' densities across the interface as well as on a non-local truncated variant of the relative entropy.
	\end{abstract}
	
	\noindent\keywords{dissipative bulk--interface reaction--diffusion systems, renormalised solutions, relative entropy method, reflection technique, generalised gradient structure.}

    {\small
	\tableofcontents
}

\section{Introduction}

The purpose of this article is to develop a global-in-time solution theory for 
entropy-dissipating reaction--diffusion systems on disjoint bounded Lipschitz domains $\Om_\pm\subset\mathbb{R}^d, d\ge1,$ coupled by conservative interfacial flux conditions on a $C^1$-interface $\Gamma\subseteq\pa\Om_+\cap\pa\Om_-$ taking the form
\begin{subequations}\label{eq:sys.example}
	\begin{alignat}{3}\label{eq:bulk.example}
				\partial_t u_i^\pm - \divv(A_i^\pm\nabla  u_i^\pm)&= f_i^\pm(u^\pm),
		\hspace{3.5cm}&&	\text{ in } (0,\infty)\times\Omega_\pm,\\
			-A_i^\pm\nabla u_i^\pm \cdot \nu^\pm &=r_i^\pm(u^+,u^-):=\pm r_i(u^+,u^-)
			&&	\text{ on } (0,\infty)\times\Gamma,\\
			-A_i^\pm\nabla u_i^\pm \cdot \nu^\pm &= 0 &&\text{ on } (0,\infty)\times(\partial\Omega_{\pm}\setminus\Gamma),
			\label{eq:transmission.example}
	\end{alignat}
\end{subequations}
where $u^\pm:=(u_1^\pm,\dots,u_n^\pm)$ and $\nu^\pm$ denotes the outward unit normal to $\Om_\pm$.

\begin{subequations}\label{eq:example.nonlin}
Prototypical examples for admissible coupling functions $f_i^\pm,r_i$ are power-law nonlinearities of the form
\begin{align}\label{eq:example.f}
	f_i^\pm(u^\pm)&=	
	-k_\pm(\alpha_i{-}\beta_i) \:\bigg( \prod_{j=1}^n
	\big(\ul u_j^\pm
	\big)^{\alpha_j}  
	- \prod_{k=1}^n \big(\ul u_k^\pm
	\big)^{\beta_k}  \bigg),
	\\\label{eq:example.r}
	r_i(u^+,u^-)&=k_\Gamma(\gamma_i-\delta_i)
	\bigg( \prod_{j=1}^n
	\big(\ul u_j^+
	\big)^{\gamma_j}	\big(\ul u_j^-
	\big)^{\delta_j}    
	- \prod_{k=1}^n \big(
	\ul u_k^+
	\big)^{\delta_k} \big(
	\ul u_k^-
	\big)^{\gamma_k}  \bigg),
\end{align}
\end{subequations}
with
 $\ul u_i^\pm:={u_i^\pm}/{u_i^{\text{ref},\pm}}$ where $u_i^{\text{ref},\pm}\in(0,\infty),i=1,\dots,n$,
denote positive reference densities and $k_\pm,k_\Gamma\ge0$ are non-negative reaction resp.\ transmission coefficients.
Furthermore, $\alpha,\beta,\gamma,\delta\in\mathbb{N}_0^n$ and $\alpha=(\alpha_1,\dots,\alpha_n)$ etc. Nonlinearities such as~\eqref{eq:example.nonlin} arise in the thermodynamic modelling of reaction--diffusion processes via generalised gradient flows (cf.\ Section~\ref{ssec:GS}). The bulk reaction rates~\eqref{eq:example.f} are well known to model reversible chemical reactions according to the mass action law (see e.g.~\cite{Feinberg_2019} and the references therein).
Our analysis also covers simple linear transmission conditions
\begin{align}\label{eq:transmission.CP}\tag{\text{\ref*{eq:example.r}'}}
r_i(u^+,u^-)= k_i\big(\ul u_i^+-\ul u_i^-\big)
\end{align}
 with general species-dependent transmission coefficients $k_i>0$ as well as their nonlinear generalisations
 \begin{align}\label{eq:transmission.nonlin.ki}\tag{\text{\ref*{eq:example.r}''}}
 	r_i(u^+,u^-)= k_i(u^+,u^-)\big((\ul u_i^+)^{\gamma_i}-(\ul u_i^-)^{\gamma_i}\big),
 \end{align}
 provided that $k_i\in C^{0,1}_\loc([0,\infty)^{2n}), k_i\ge0$.
Due to the absence of a maximum principle for system~\eqref{eq:sys.example},~\eqref{eq:example.nonlin}, there are no  $L^\infty$ a priori bounds on the densities, and entropic estimates are insufficient to guarantee integrability of the interface transmission and reaction terms, which would be needed for a distributional interpretation. 
In fact, to the best of the authors' knowledge no time-global framework exists for nonlinearities such as~\eqref{eq:example.r},~\eqref{eq:transmission.nonlin.ki} without smallness restrictions on the data. Even for condition~\eqref{eq:transmission.CP} available global existence results appear to be restricted to the case that $k_i\equiv k$ are independent of $i$ (cf.~\cite{CP_2021_existence_membrane}).

\paragraph{Renormalised solutions}
In order to handle interface conditions of the form~\eqref{eq:example.r} or~\eqref{eq:transmission.nonlin.ki}, our strategy is to adapt the concept of renormalised solutions introduced by  \revcite{Fischer_2015} for bulk reaction--diffusion systems,
which is based on truncation functions acting simultaneously on all densities. Renormalised solutions were first introduced by DiPerna \& Lions for the transport and Boltzmann equation~\cite{DiPL_1989_transport,DiPL_1989_Boltzmann}. Since then, this concept has found various applications in PDEs, including elliptic and parabolic equations.
The key point in Fischer's extension is to consider truncation functions $\xi=\xi(u)$ that act on the vector of densities $u=(u_1,\dots,u_n)$.
To explain the idea in the bulk case let us suppose that $u:=u^+$, where $u^+_i\ge0$ for all $i=1,\dots,n$, is a sufficiently regular solution to~\eqref{eq:sys.example} in $(0,\infty)\times\ol\Om_+$ with $A_i:=A_i^+$,
$f_i:=f_{i}^+$, and
 $r_i\equiv0$. Take $\xi\in C^\infty([0,\infty)^n)$ with compactly supported derivative $D\xi\in C^\infty_c([0,\infty)^n)^n$. Then, by the chain rule, the composition $\xi\circ u$ satisfies
 \begin{align}\label{eq:renorm.symb}
 \frac{\dd}{\dd t}\xi(u)=\divv\left(\sum_{i=1}^n D_i\xi(u)A_i\nabla u_i\right)-\sum_{i,j=1}^n D_{ij}\xi(u)\nabla u_j\cdot A_i\nabla u_i+\sum_{i=1}^nD_i\xi(u)f_i(u),
 \end{align}
 where the second term on the right-hand side arises from a commutator.
 The control that follows from entropy dissipation ensures that all terms on the right-hand side are well-defined integrable functions. Requiring identity~\eqref{eq:renorm.symb} to hold  in the distributional sense (in duality with $C^\infty_c([0,\infty)\times\ol\Om_+)$) for all such $\xi$
is sufficient to ensure 
equivalence to the weak formulation in case of integrable reaction rates (cf.~\cite[Theorem~2 \& Lemma 4]{Fischer_2015}). Fischer's renormalised solution concept further enjoys 
a weak--strong uniqueness principle, see~\cite{Fischer_2017}.
Since Fischer's seminal work, the idea of renormalisation in multi-component models of reaction--diffusion type was extended to Shigesawa--Kawasaki--Teramoto
cross-diffusion--reaction systems and energy--reaction--diffusion systems, see~\cite{CJ_existence_2019,CJ_wkstr-unique_2019,DT_2019} and~\cite{FHKM_2022,Hopf_2022}
for global existence, weak--strong uniqueness and exponential equilibration. For existence and long-time behaviour in the case of porous-medium diffusion, see~\cite{FFKT_2023}. 
All these models are pure bulk equations without boundary or interfacial coupling.

\paragraph{Objectives and strategy}
 In the present work, we aim to perform the first step towards an extension of renormalised solutions to reaction--diffusion systems interacting with lower-dimensional spatial structures.
To get a first indication of the new difficulties arising in renormalised formulations for bulk--interface reaction--diffusion systems, pick $\sigma\in\{\pm\}$
and consider the equation for $u^\sigma$ in~\eqref{eq:sys.example}.
A naive direct application of the renormalisation concept in~\cite{Fischer_2015}
would amount to the use of truncation functions $\xi=\xi(u^\sigma)$ for the equation in $\Om_\sigma$.
However, following~\eqref{eq:renorm.symb}, this would lead to interface terms of the form $D_i\xi(u^\sigma)r_i(u^+,u^-)$, whose integrability is not guaranteed by the entropy dissipation inequality for nonlinearities $r_i$ such as~\eqref{eq:example.r},~\eqref{eq:transmission.nonlin.ki}. 
The problem is that $D_i\xi(u^\sigma)$ does not involve simultaneous truncation in $u^+$ and $u^-$.
We therefore need to adjust the renormalisation concept.
Our strategy is to introduce suitable local continuations $\tilde u^{\sm}$ extending the densities $u^{-\sm}$ defined on the neighbouring bulk compartment $\Om_{-\sm}$ across $\Gamma$ to the domain of interest $\Om_\sm$ and
formulate the notion in terms of truncation functions $\xi=\xi(u^\sm,\tilde u^{\sm})$. At interior points of the interface $\Gamma$, which is assumed to be a compact $(d-1)$-dimensional $C^1$ submanifold of $\mathbb{R}^d$ with or without boundary, this can be achieved by the classical reflection method for Lipschitz boundaries, whereas at boundary points $p\in\pa\Gamma$ involving triple junctions (cf.\ Figure~\eqref{fig:junction}) we use a two-step procedure that requires a somewhat more stringent assumption on the geometry, see hypothesis~\ref{hp:C1ext}. In the special case of a flat interface $\Gamma$ with $\Om:=\Om_+\cup\Om_-$ symmetric about a hyperplane $\wt\Gamma$ containing $\Gamma$, the extension to the neighbouring compartment can be defined globally and is simply given by reflection at $\wt\Gamma$.

\begin{figure*}[h!]
	\centering
	\begin{subfigure}[t]{0.5\textwidth}
		\centering
		\includegraphics[scale=.7]{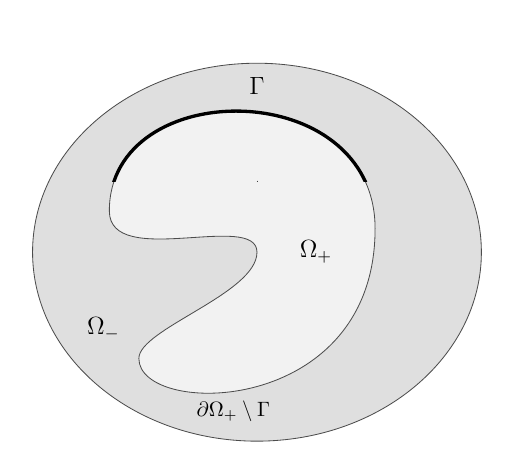}
	\caption{Smooth geometry.}
	\label{fig:annular}
\end{subfigure}%
~ 
\begin{subfigure}[t]{0.5\textwidth}
	\centering
	\includegraphics[scale=.7]{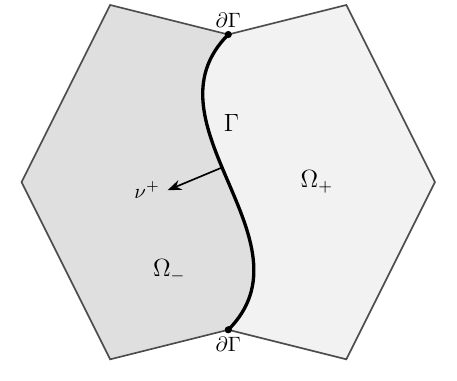}
	\caption{Triple junction structure at $\pa\Gamma$.}
	\label{fig:junction}
\end{subfigure}
\caption{Examples of admissible bulk-interface geometries.}
\end{figure*}

\label{p:strategy.0} To motivate the properties that we want our extension $\tilde u$ to satisfy, let us formally derive the equations appearing in the renormalised formulation (cf.~\eqref{eq:def.renorm.c.gen} for the precise equations) based on truncation functions
$\xi=\xi(s,\tilde s)$ for the compartment $\Om_\sigma\cup\Gamma$, where $\sigma\in\{\pm\}$ is fixed.  
To this end, we consider, for simplicity, a reaction--diffusion equation for a scalar quantity $u:(0,\infty)\times(\Om_+\dot\cup\Om_-)\to[0,\infty)$, $u(t,x)=u^\pm(t,x)$ for $x\in\Om_\pm$, with  flux ${\sf J}^\pm=-a^\pm\nabla u^\pm$, $a^\pm>0$, reaction rate $f\in C([0,\infty))$, and interface transmission rates $r^\pm\in C([0,\infty)^2)$,
\begin{alignat}{3}
	&\pa_tu+\divv({\sf J}^\pm)=f(u)\qquad&&\text{for }t>0,x\in\Om_+\dot\cup\Om_-,
\label{eq:conteq.strong}
	\\&u_{|t=0}=u_0\quad&&\text{for }x\in\Om_+\dot\cup\Om_-\nonumber
\end{alignat}
subject to the flux conditions 
\begin{alignat*}{3}
{\sf{J}}^\pm\cdot\nu^\pm&=r^\pm(\tr_\Gamma u^+,\tr_\Gamma u^-)\qquad&&\text{for }t>0,z\in\Gamma,
\\{\sf{J}}^\pm\cdot\nu^\pm&=0\qquad&&\text{for }t>0,z\in\pa\Om_\pm\setminus \Gamma.
\end{alignat*}
Given a point $\beta\in \Gamma$ and a small open neighbourhood $V=V_\beta$ of $\beta$ that admits a measure-preserving bi-Lipschitz homeomorphism $\Phi:\Om_\sm\cap V\to\Om_{-\sm}\cap V$ (cf.\ Section~\ref{ssec:ext} for details), which thus uniquely extends to $\ol\Om_\sm\cap V$, with the crucial property of keeping $\Gamma\cap V$ fixed in the pointwise sense, we abbreviate $\tilde u:=\tilde u^\beta:=u\circ\Phi$ (where $\circ$ is with respect to space variable), 
and compute
\begin{align*}
	\frac{\dd}{\dd t}\xi(u,\tilde u)=D_1\xi(u,\tilde u)\pa_tu+D_2\xi(u,\tilde u)\pa_t\tilde u \qquad\text{in }\mathcal{D}'([0,\infty)\times(\ol\Om_\sm\cap V)).
\end{align*}
For the first term on the right-hand side, the renormalised formulation is mostly classical~\cite{Fischer_2015}, except for the dependence of $D_1\xi$ on $\tilde u$ required for the interface condition.
To handle the second term on the right-hand side, it is convenient to change variables $y=\Phi^{-1}(x)$ and then use the equation for $u=\tilde u\circ\Phi^{-1}$ in the opposite domain $\Om_{-\sm}$.\footnote{Alternatively, one could have used the fact that $\tilde u$ satisfies, for $x\in \Om_\sm\cap V$, the transformed equation $\pa_t\tilde u+\divv^\Phi(\wt{\sf J})=f(\tilde u)$, where  $\divv^\Phi H:=\divv(D\Phi^{-1}_{|\Phi(\cdot)}H)$ and $\wt{\sf J}={\sf{J}}\circ\Phi$.}
We thus arrive at the following 
integral formulation of~\eqref{eq:conteq.strong} for any $\psi\in C^\infty_c([0,\infty)\times V)$ and (almost) all $T\in(0,\infty)$
\begin{align*}
&\int_{\Om_\sigma\cap V}\psi(T,\cdot)\,\xi(u(T,\cdot),\tilde u(T,\cdot))\,\dd x-\int_{\Om_\sigma\cap V}\psi(0,\cdot)\,\xi(u_0,\tilde u_0)\,\dd x -\int_0^T\!\!\int_{\Om_\sigma\cap V}\pa_t\psi\, \xi(u,\tilde u)\,\dd x\dd t
\\&=\int_0^T\!\!\int_{\Om_\sigma\cap V}\nabla(\psi D_1\xi(u,\tilde u))\cdot{\sf J}^\sm	\,\dd x\dd t
-\int_0^T\!\!\int_{\Gamma_\sigma\cap V}\psi\,D_1\xi(u,\tilde u)r^\sigma(u^+,u^-)\,\dd\mathcal{H}^{d-1}\dd t
\\&
\qquad+\int_0^T\!\!\int_{\Om_\sigma\cap V} \psi D_1\xi(u,\tilde u)f(u) \dxdt 
+\int_0^T\!\!\int_{\Phi(\Om_\sm\cap V)}(\psi D_2\xi(u,\tilde u))\circ\Phi^{-1}\pa_tu\,|\det D\Phi^{-1}|\dd x\dd t.
\end{align*}
Integrals analogous to the first and third terms on the right-hand side also appear in renormalised formulations for pure bulk problems.
Concerning the second term on the right-hand side involving the surface integral, we observe that it is bounded because of the compact support of $D_1\xi$ and since 
\begin{align}\label{eq:trace.tilde}\begin{aligned}
	&\tr_{\Gamma_\sm\cap V}u=\tr_{\Gamma\cap V} u^\sm,
	\\&\tr_{\Gamma_\sm\cap V}\tilde u=\tr_{\Gamma \cap V}u^{-\sm}.
	\end{aligned}
\end{align}
For this last property, it is indispensable that $\Phi$ fixes $\Gamma$ pointwise.
It remains to control the last term on the right-hand side. Since renormalised solutions do not possess a distributional time derivative in the classical sense, our strategy is to rewrite this term using a renormalised version of the equation~\eqref{eq:conteq.strong} for $u^{-\sm}$. To arrive at such an appropriate expression, we insert $\pa_tu^{-\sm}=-\divv{\sf J^{-\sm}}$ in a first step.
Next, since the flux ${\sf J^\pm}$ is not guaranteed to have Sobolev regularity for renormalised solutions, we rely on integration by parts in the divergence term, which requires 
$|\det D\Phi^{-1}|$ to be at least Lipschitz continuous. Adapting ideas of the classical reflection technique at Lipschitz boundaries, we will be able to ensure that $\Phi$ is even measure preserving, so that $|\det D\Phi^{-1}|\equiv 1$. Another point that is essential when using integration by parts is to avoid uncontrolled boundary traces, which can be ensured provided that
$\Phi(\pa\Om_\sm\cap V)=\pa\Om_{-\sm}\cap V$, $\Phi(\Om_\sm\cap \pa V)=\Om_{-\sm}\cap \pa V$.
Assuming that $\Phi$ satisfies all these properties, we can rewrite the last term on the right-hand side of the above equation as 
\begin{multline*}
\int_0^T\!\!\int_{\Om_{-\sm}\cap V}\nabla(\psi\circ\Phi^{-1} D_2\xi(u,\tilde u)\circ\Phi^{-1})\cdot{\sf J}^{-\sm}\,\dd x\dd t
-\int_0^T\!\!\int_{\pa\Om_{-\sm}\cap V}\psi\circ\Phi^{-1} D_2\xi(u,\tilde u)\circ\Phi^{-1}{\sf J}^{-\sm}\cdot \nu^{-\sm}\,\dd \mathcal{H}^{d-1}\dd t
\\+\int_0^T\!\!\int_{\Om_{-\sm}\cap V}\psi\circ\Phi^{-1} D_2\xi(u,\tilde u)\circ\Phi^{-1}f(u)\,\dd x\dd t,
\end{multline*}
where for the boundary integral we used the fact that $\psi(t,\Phi^{-1}(\cdot))$ vanishes on $\pa V\cap\Om_{-\sm}$ for all $t$ as a consequence of $\supp\psi(t,\cdot)\Subset V$ and $\Phi^{-1}(\Om_{-\sm}\cap \pa V)=\Om_\sm\cap \pa V$. Note also that the integrand involving the reaction term in the last line is bounded due to $D_2\xi(u,\tilde u)\circ\Phi^{-1}=D_2\xi(u \circ\Phi^{-1},u)$ and $\supp D_2\xi\Subset[0,\infty)^{2}$.
Finally, to deal with the second integral, we split it into two parts where spatial integration is restricted to $\Gamma\cap V$ and to $(\pa\Om_{-\sm}\setminus\Gamma)\cap V$, respectively. 
For the latter, we can use the no-flux boundary conditions on $\pa\Om_{-\sm}\setminus\Gamma$. For the former, we use the interface flux conditions and~\eqref{eq:trace.tilde}, so that the entire term reduces to
\begin{align*}
-\int_0^T\!\!\int_{\Gamma_{-\sm}\cap V}D_2\xi(\tr u^\sm,\tr u^{-\sm})r^{-\sm}(\tr u^+,\tr u^-)\,\dd \mathcal{H}^{d-1}\dd t, \qquad \text{(with $\tr:=\tr_\Gamma$)},
\end{align*}
whose integrand is bounded, since $D_2\xi$ has compact support and $|(\tr u^\sm,\tr u^{-\sm})|_1=|(\tr u^+,\tr u^-)|_1$  on $\Gamma$.
\label{p:strategy.1}

For the construction of a generalised extension map $\Phi$ with the properties above, see Section~\ref{ssec:ext}. 

\paragraph{State-of-the art}

Most of the literature on global existence results for bulk--interface or bulk--surface reaction--diffusion systems is either based on uniform a priori bounds for the densities that rely on specific assumptions on the nonlinearities or diffusion coefficients or on growth restrictions on the nonlinearities involved. In fact, for bulk reaction--diffusion systems with superquadratic reactions rates, it is known that the property of mass dissipation or mass control does not prevent blow-up of the densities in $L^\infty$~\cite{PS_2023_blow-up,PS_2000}. It remains to be seen whether examples of blow-up solutions can also be constructed under the somewhat stronger structural hypothesis of entropy dissipation. 
Concerning the bulk--interface systems proposed by~\revcite{Mielke_2013_thermomechanical}, Disser~\cite{Disser_2015,Disser_2020_bulk-interface}
is one of the first to provide some rigorous underpinning. 
She focusses on bulk--interface interactions in \textit{scalar} reaction--diffusion equations and uses maximal $L^p$ regularity theory to establish a well-posedness theory assuming only Lipschitz regularity on the domains and $C^1$ regularity on the interface. Her global well-posedness result in~\cite{Disser_2020_bulk-interface} relies on uniform $L^\infty$ bounds that follow by comparison principle arguments, which crucially exploit the fact that the reaction--diffusion equations are scalar.
For a certain class of bulk--surface reaction--diffusion systems in a smooth geometry, \revcite{AB_2024} obtain
global well-posedness via maximal $L^p$ regularity and uniform a priori bounds on the densities. The latter rely on a  triangular structure of the reaction network and additional growth assumptions in the nonlinearities.
Notably, the treatment of the nonlinear normal flux conditions in this work is facilitated by the linear growth bounds on the ad-/desorption rates to and from the surface. 
\revcite{MT_2023} establish the global well-posedness of classical solutions for regular initial data for mass-controlled bulk--surface reaction--diffusion systems assuming that the reaction networks have a triangular structure, which is the key ingredient to deduce uniform spatial bounds. The global well-posedness result applies to a rather general class of nonlinearities with polynomial growth and is obtained from a priori bounds for a family of auxiliary energy type functionals in combination with duality methods.
\revcite{CP_2021_existence_membrane} consider a class of reaction--diffusion systems of the form~\eqref{eq:sys.example} 
with the linear interface conditions~\eqref{eq:transmission.CP} and mass-controlling reactions $f_i^\pm$ of at most quadratic growth. Under the assumption $k_1=\dots=k_n$, the authors establish the global existence of weak solutions by adapting the $L^1$-theory for bulk reaction--diffusion systems (cf.~\cite{Pierre_2010_survey} and references therein). The restriction $k_1=\dots=k_n$ appearing in the main results of~\cite{CP_2021_existence_membrane} is needed to ensure applicability of a duality method. The latter provides
$L^2_{t,x}$-integrability of all densities, which implies integrability of the (subquadratic) reaction rates. An additional $W^{1,1}$-regularity property on each bulk compartment ensures well-defined traces at the interface.
There are various further works that establish well-posedness for bulk--interface reaction--diffusion systems arising in specific applications such as~\cite{HR_2018,BR_2022}, which rely on the explicit form of the nonlinearities. For further details and a discussion of several applications, we refer to the survey in~\cite[Sections~1 \& 2]{AB_2024}.

\medskip
In this paper, by developing the reflection technique outlined above, we establish a first framework of global existence for reaction--diffusion systems with bulk--interface interactions without any restriction on the growth rates of the bulk reactions $f_i^{\pm}$ and interfacial transmission functions $r_i^{\pm}$. Crucially,  within in this framework, we are able to establish a weak--strong uniqueness principle, which implies that any renormalised solution must coincide with the strong solution whenever the latter exist.

\paragraph{Outline of the paper}
In Section~\ref{ssec:GS}, we show how reaction--diffusion systems with interface conditions such as~\eqref{eq:sys.example},~\eqref{eq:example.f} with~\eqref{eq:example.r},~\eqref{eq:transmission.CP}, or~\eqref{eq:transmission.nonlin.ki} arise in the thermodynamic modelling via generalised gradient flows.
In Section~\ref{sec:main.results}, we state the precise model hypotheses, introduce the local extension technique across the interface, specify the concept of renormalised solutions taking into account the interface condition, and formulate our main results.
The subsequent part, Sections~\ref{sec:weakstruniq}--\ref{sec:existence}, is devoted to the proofs of the main results.

\subsection{A gradient structure for reaction--diffusion systems with interface conditions}\label{ssec:GS}
The modelling of entropy-dissipating bulk--interface reaction--diffusion systems via gradient structures goes back to the works~\cite{Mielke_2011,Mielke_2013_thermomechanical,GM_2013}.
The main ingredients in this formalism are a driving functional $\mathcal{H}=\mathcal{H}(\bfu)$, hereafter loosely corresponding to the negative entropy of the physical system, and a dual dissipation potential $\mathcal{R}^*=\mathcal{R}^*(\bfu;\boldsymbol{\Xi})$ leading to 
the abstract gradient-flow equation 
\begin{align}\label{eq:gfe}
	\dot\bfu\in \pa_{\boldsymbol{\Xi}}\mathcal{R}^*(\bfu;-D\mathcal{H}(\bfu)),
\end{align}
where $D\mathcal{H}$ denotes the formal differential or variational derivative of $\mathcal{H}$.
At this stage, the structures $\mathcal{H},\mathcal{R}^*$ and the inclusion~\eqref{eq:gfe} are purely formal in the sense that no function spaces have been specified.
While the works mentioned above are based on Onsager's formalism involving quadratic dual dissipation potentials $\boldsymbol{\Xi}\mapsto\mathcal{R}^*(\bfu;\boldsymbol{\Xi})$, in the modelling of chemical reaction processes non-quadratic potentials $\mathcal{R}^*(\bfu;\Xi)$ were suggested due to their link to microscopic descriptions via large deviation theory~\cite{ADPZ_2011,ADPZ_2013,MPR_2014,LMPR_2017,MPPR_2017,PS_2023}.
Let us also mention the recent work~\cite{CCCE_2024_moving-interface} where the use of {\em generalised} gradient flows based on cosh structures was discussed to model linear interface conditions of Butler--Volmer type driven by a  Boltzmann entropic functional.

Throughout this manuscript, we assume that $\mathcal{H}$ is a relative Boltzmann entropy
\begin{subequations}
\begin{align}\label{eq:entr.int}
	\mathcal{H}(\bfu)=\sum_{\sigma\in\{\pm\}}\int_{\Omega_\sigma}h^\sigma(u^\sigma)\,\dd x,
\end{align}
where 
\begin{align}\label{eq:entr.fn}
	h^\sigma(u):=\sum_{i=1}^nu_i^{{\rm ref},\sigma}
	\mathfrak{B}(u_i/u^{{\rm ref},\sigma}_{i}),\qquad \mathfrak{B}(r):=r\log r-r,\quad  u^{{\rm ref},\sigma}_{i}>0.
\end{align}
\end{subequations}
To capture~\eqref{eq:sys.example}, we  adapt~\cite[Section~4]{Mielke_2013_thermomechanical} and enhance classical gradient structures for bulk reaction--diffusion systems by an additional dissipation mechanism inducing non-trivial interfacial transmission. To this end, we consider dual dissipation potentials of the form
\begin{equation}\begin{split}
	\mathcal{R}^*(\bfu;\boldsymbol{\Xi})&:=\sum_{\sigma\in\{\pm\}}\mathcal{R}^*_{\sigma}(\bfu;\boldsymbol{\Xi})+\mathcal{R}^*_\Gamma(\bfu;\boldsymbol{\Xi})
	\\[1mm]&:=\sum_{\sigma\in\{\pm\}}\big(\mathcal{R}^*_{\sigma,\textrm{diff}}(u^\sigma;\Xi^\sigma)+\mathcal{R}^*_{\sigma,\textrm{react}}(u^\sigma;\Xi^\sigma)\big)+\mathcal{R}^*_\Gamma(\bfu;\boldsymbol{\Xi}),
\end{split}\end{equation}
where $\bfu=(u^+,u^-)$, $\boldsymbol{\Xi}=(\Xi^+,\Xi^-)$
and 
\begin{subequations}\label{eq:dissp.ex.bulk}
	\begin{align}\label{eq:dissp.ex.bulk.diff}
		\mathcal{R}^*_{\sigma,\textrm{diff}}(u;\Xi)=\frac12\int_{\Om_\sigma}\sum_{i=1}^n u_iA_i^\sigma\nabla\Xi_i:\nabla\Xi_i\,\dd x.
	\end{align}
As pointed out above, 
to model entropy-dissipating reactions, cosh structures are often favoured over quadratic dissipation potential due to their link to fluctuation theory. The mass--action reactions~\eqref{eq:example.f} are captured in this approach by
\begin{align}\label{eq:dissp.ex.bulk.react}
	\mathcal{R}^*_{\sigma,\textrm{react}}(u;\Xi)
	= \int_{\Om_\sigma} \kappa_\pm(u)
	\mathfrak{C}((\alpha{-}\beta)\cdot\Xi)
	\,\dd x,
\end{align}\end{subequations}
where $\kappa_\pm(u)=k_\pm\prod_{i=1}^n\big(\frac{u_i}{\uref_i}\big)^{\alpha_i/2}\big(\frac{u_i}{\uref_i}\big)^{\beta_i/2}$
and
\begin{align*}
\mathfrak{C}(r):=4(\cosh(r/2)-1),\;\text{ so that }\mathfrak{C}'(r)=2\sinh(r/2),\;\text{  }\;\mathfrak{C}'(\log r)=\sqrt{r}-1/\sqrt{r}.
\end{align*}

\paragraph{Example for interface conditions}
Linear and nonlinear interface conditions dissipating the entropy functional~\eqref{eq:entr.int}
 naturally arise in the PDE modelling via generalised gradient flows:
\begin{enumerate}
	\item \textsf{Species-dependent linear interface conditions.}\\
	The linear transmission condition~\eqref{eq:transmission.CP} can be modelled by
	\begin{align*}
		\mathcal{R}^*_\Gamma(\bfu;\boldsymbol{\Xi})=\int_\Gamma\sum_{i=1}^nk_i\sqrt{\tfrac{u_i^+u_i^-}{u_i^{\textrm{ref},+}u_i^{\textrm{ref},-}}}
		\,\mathfrak{C}(\llbracket \boldsymbol{\Xi}_i\rrbracket)\,\dd\mathcal{H}^{d-1},
	\end{align*}
	where $\llbracket \boldsymbol{\Xi}\rrbracket:=\Xi^+-\Xi^-$.
	\item\label{it:example.polynomial} \textsf{Polynomial interface condition.}\\
	A generalised gradient system for  the bulk--interface reaction--diffusion equations~\eqref{eq:sys.example},~\eqref{eq:example.nonlin} driven by  the Boltzmann entropy $\mathcal{H}$ 
	 is obtained by taking $\mathcal{R}^*_{\pm}(\bfu;\boldsymbol{\Xi})$ of the form~\eqref{eq:dissp.ex.bulk} and choosing $\mathcal{R}^*_\Gamma$ as
	\begin{align*}
		\mathcal{R}^*_\Gamma(\bfu;\boldsymbol{\Xi})=\int_\Gamma
		\kappa_\Gamma(\bfu)\,\mathfrak{C}((\gamma{-}\delta){\,\cdot\,}\llbracket \boldsymbol{\Xi}\rrbracket)
		\,\dd\mathcal{H}^{d-1},
	\end{align*}
	where $\kappa_\Gamma(\bfu)=
	k_\Gamma\prod_{i=1}^n\Big(\frac{u_i^+u_i^-}{u^{\textrm{ref},+}_i u^{\textrm{ref},-}_i}\Big)^{(\gamma_i+\delta_i)/2}$, $k_\Gamma>0$. 
\item \textsf{Generalisation.}\\ 
The two examples above are special cases of
\begin{align*}
	\mathcal{R}^*_\Gamma(\bfu;\boldsymbol{\Xi})=\int_\Gamma \sum_{\ell=1}^N\kappa^{(\ell)}(\bfu)
	 \mathfrak{C}(\lambda^{(\ell)}{\cdot}\llbracket\boldsymbol{\Xi}\rrbracket)
	\,\dd\mathcal{H}^{d-1},
\end{align*}
where $N\in \mathbb{N}$, $\lambda^{(\ell)}\in \mathbb{N}_0^n$ and $\kappa^{(\ell)}(\bfu)=k_\Gamma^{(\ell)}(\bfu)\prod_{i=1}^n\Big(\frac{u_i^+u_i^-}{u^{\textrm{ref},+}_i u^{\textrm{ref},-}_i}\Big)^{\lambda^{(\ell)}_i/2}$ for $\ell=1,\dots,N$, where $k_\Gamma^{(\ell)} \in C([0,\infty)^{2n}),$ with $k_\Gamma^{(\ell)}\ge0$. (To recover Example~\ref{it:example.polynomial}, choose $N=1$, $\lambda^{(1)}_i=|\gamma_i-\delta_i|$, and $k_\Gamma^{(1)}(\bfu)=k_\Gamma\prod_{j=1}^n(u_j^+u_j^-)^{\min\{\gamma_j,\delta_j\}}$.)
\end{enumerate}
A natural question in this context, which is beyond the scope of this manuscript, is that of a physical foundation of the interface conditions above.
Let us, however, emphasise that, while a significant class of examples is generated by generalised gradient structures, our analysis applies more generally to entropy-dissipating systems, see Section~\ref{sec:main.results} for the precise assumptions.

\section{Main results}\label{sec:main.results}

\subsection{Hypotheses and general model equations}

\paragraph{Assumptions on the geometry}
\begin{enumerate}[label=(G\arabic*)]
	\item\label{hp:geo.basic} $\Om_\pm\subset\mathbb{R}^d$ are disjoint bounded Lipschitz domains (in the sense of~\cite[Definition 1.2.1.1]{Grisvard_1985}) and $\Gamma\subseteq\pa\Om_+\cap\pa\Om_-$.
	\item\label{hp:C1ext}  $\Gamma$ is a compact $(d-1)$-dimensional embedded $C^1$-submanifold of $\mathbb{R}^d$ with or without boundary,
	and there exists a $(d-1)$-dimensional embedded $C^1$-submanifold of $\mathbb{R}^d$ without boundary,  denoted $\wt\Gamma$, which contains $\Gamma$,
	partitions $\mathbb{R}^d$ into two disjoint connected components $G_-\dot\cup G_+=\mathbb{R}^d\setminus\wt\Gamma$, and separates $\Om_\pm$ in the sense that $\Om_\pm\subseteq G_\pm$.
\end{enumerate}
As we will detail below, hypothesis~\ref{hp:C1ext} allows to define a local extension of the densities on $\Om_\pm$ across the interface that has a well-defined trace on $\Gamma$ and preserves the outer no-flux boundary condition. With such an extension at hand, we can introduce a sufficiently stringent generalised solution concept that not only guarantees global existence but also enjoys a weak--strong stability estimate. 

\paragraph{Assumptions on coefficients functions and entropic structure}
For simplicity, we consider the same logarithmic entropy function in each compartment with reference densities $u_i^{\sm,\rm{ref}}\equiv1$ for all $i=1,\dots,n,\sm\in\{\pm\}$, i.e.\ we let $h^\pm(u):=h(u)$ where
\begin{align}\label{eq:entropy.function}
	h(u):=\sum_{i=1}^n\mathfrak{B}(u_i),\quad\qquad \mathfrak{B}(r):=r\log r-r.
	\end{align}
In line with this, we suppose that the reaction rates do not depend on $\pm$, so that $f_i^\pm=f_i$. This is merely to simplify notation, and our analysis can easily be extended to the case that the constant values $u_i^{\sm,\textrm{ref}}>0$ depend on $i,\sm$.
\begin{enumerate}[label=(C\arabic*)]
	\item\label{it:HP.diff} {\sf Diffusion coefficients:} $A_i^\pm\in L^\infty(\Om_\pm,\mathbb{R}_{\textrm{sym}}^{d\times d})$ and there exists $\mathfrak{a}>0$ such that 
	\begin{align*}
	\zeta\cdot A_i^\pm(x)\zeta\ge\mathfrak{a}|\zeta|^2\qquad \text{for all }\zeta\in \mathbb{R}^d\text{ and $\mathcal{L}^d$-almost all }x\in\Om_\pm
	\end{align*}
	for every $i=1,\dots,n$.
	\item \label{it:HP.react} {\sf Reaction rates:} 
$f_i\in C^{0,1}_\loc([0,\infty)^n)$ 
	 are \textit{quasi-positive}, i.e.\ for all $u\in[0,\infty)^n$
	\begin{align}
	u_i=0\implies  f_i(u)\ge0,
	\end{align}
	and \textit{entropy dissipating}, i.e.\ for all $u\in[0,\infty)^n$
	\begin{align}\label{eq:f.diss}
	\sum_{i=1}^n	f_i(u)D_ih(u)\le0,
	\end{align}
	where $h$ is as in~\eqref{eq:entropy.function}.
	\item\label{it:HP.int} {\sf Interface condition:}
	$r_i^\sigma\in C^{0,1}_\loc([0,\infty)^{2n})$ 
	are \textit{quasi-positive}, i.e.\ 
	for all $\bfu=(u^+,u^-)\in [0,\infty)^{2n}$
	\begin{align}
	u_i^\sigma=0\implies r_i^\sigma(\bfu)\le0,\qquad
	\end{align}
	 \textit{entropy dissipating}, i.e.\ for all $\bfu=(u^+,u^-)\in [0,\infty)^{2n}$
	\begin{align}
	\label{eq:R.diss}
	\sum_{\sigma\in\{\pm\}}\sum_{i=1}^n r_i^\sigma(\bfu)D_ih(u^\sigma)\ge0,
	\end{align}
	and \textit{mass preserving}, i.e.\ for all $\bfu=(u^+,u^-)\in [0,\infty)^{2n}$
	\begin{align}\label{eq:mass.pres}
		\sum_{\sigma\in\{\pm\}}r^\sigma_i(\bfu)=0.
	\end{align}
\end{enumerate} 
\begin{remark}
	\begin{enumerate}[label=(\roman*)]
		\item 	The reaction rates~\eqref{eq:example.f} fulfil~\ref{it:HP.react}, and each of 
		the interface conditions~\eqref{eq:example.r},~\eqref{eq:transmission.CP}, and~\eqref{eq:transmission.nonlin.ki} satisfies~\ref{it:HP.int}.
		\item The mass conservation property of the interface conditions in~\eqref{eq:mass.pres} is not essential for the analysis in this manuscript. 
		However, for pure transmission of species through a passive interface, it appears to be a natural property from the modelling viewpoint, cf.~\cite{CP_2021_existence_membrane}.
        	A more detailed study of the qualitative behaviour including  conservation laws and long-time asymptotics for specific classes of bulk-interface systems is deferred to future work.
	\item In the global existence result below (cf.\ Theorem~\ref{thm:ex}), we expect that the local Lipschitz regularity of $f_i^\pm$ and $r_i^\pm$ can be relaxed to continuity by using entropic variables, see e.g.~\cite{CJ_existence_2019}.
	\item We expect that our theory can be extended to a broader class of entropies obtained from a power-law ansatz for the entropy function generalising the Boltzmann logarithmic  choice~\eqref{eq:entropy.function}  (cf.~\cite[Section 2.3.2]{Hopf_2022}).
	\end{enumerate}
\end{remark}

\paragraph{Notations and model equations}
Below, it will be convenient to abbreviate $\Om:=\Om_-\dot\cup\Om_+$. We caution that, with this convention, $\Gamma$ is not contained in $\Om$ and that 
the symbol $\Om$ merely serves notational convenience.

We define
\[	A_i(x)=A_{i}^\pm(x)\quad \text{for } \mathcal{L}^d\text{-a.e.\ }x\in\Om_{\pm}.\]
Similarly, given $u^\pm:(0,T)\times\Om_\pm\to[0,\infty)^n$, we abbreviate for $i=1,\dots,n$
\begin{align}\label{eq:def.ui}
u_i(t,x)=\begin{cases}
	u_i^+(t,x),& x\in\Om_+\\
	u_i^-(t,x),& x\in\Om_-.
\end{cases}
\end{align}
We usually do not explicitly indicate the dependence of $A_i$ on the spatial variable, 
 and simply write $A_i:=A_i(x)$. Finally, throughout this manuscript, unless specified otherwise, we abbreviate $\bfu=(u^+,u^-)$ with $u^\pm=(u_1^\pm,\dots,u_n^\pm)$.

The strong formulation of the reaction--diffusion systems with interface conditions considered in this manuscript then reads as
\begin{equation}\label{eq:sys.sigma}
	\begin{cases}
		\partial_t u_i - \divv(A_i\nabla  u_i) = f_i(u) &\text{ in } (0,\infty)\times\Omega,\\
		-A_i^\pm\nabla u_i^\pm \cdot \nu^\pm =  r_i^\pm(\bfu) &\text{ on } (0,\infty)\times\Gamma,\\
		-A_i^\pm\nabla u_i^\pm \cdot \nu^\pm = 0 &\text{ on } (0,\infty)\times\partial\Omega_\pm\setminus\Gamma,\\
	\end{cases}
\end{equation}
assuming sufficient regularity on the data (including $A_i^\pm$) and the unknown $u$.

Throughout this manuscript, the term $r^\sm_i(\bfu)$ for $\bfu=(u^+,u^-)$ is to be understood in the sense $r^\sm_i(\tr_\Gamma u^+,\tr_\Gamma u^-)$. Observe that here $u^+$ lives on $\Om_+$, while $u^-$ lives on $\Om_-$, so that the traces need to be takes from opposite sides of $\Gamma$.
Furthermore, the terms ``$\Gamma_{\sm}$-trace''  or ``$\Gamma$-trace from $\Om_\sm$" will be used as a short-hand notation for the $\Gamma$-trace of the restriction to  $\Om_{\sm}$ of a given function on $\Om$ (used for $\sm\in\{\pm\}$). Integrals $\int_{\Gamma_\sm}\dots\dd\mathcal{H}^{d-1}$ are to be understood in a similar sense.

Finally, for sufficiently smooth functions $g=g(u_1,\dots,u_m)$, $m\in\mathbb{N}$, we let $D_ig=\frac{\pa g}{\pa u_i}$ and $D_{ij}g=D_iD_jg=\frac{\pa^2 g}{\pa u_j\pa u_i}$, $i,j=1,\dots,m$.

\subsection{Geometric preliminaries}\label{ssec:ext}

The purpose of this section is to devise appropriate local extensions of the species' densities $u_i^\sm$ across the interface to the compartment $\Om_{-\sm}$, which is an indispensable ingredient in our renormalised solution concept coping with higher-order nonlinear interface conditions.
To this end, it is instructive to recall the standard extension technique across Lipschitz boundaries summarised in Appendix~\ref{ssec:reflection}.
In particular, for interior points $\beta\in\Gamma\setminus\pa\Gamma$ of the $C^1$ hypersurface $\Gamma$ with (or without) boundary, it suffices to consider the classical reflection map $\RFL_\beta:V_\beta\to V_\beta$ at $\Gamma$ on a 
sufficiently small neighbourhood $V_\beta\subset\mathbb{R}^d$ of $\beta$ (see~Appendix~\ref{ssec:reflection} for the concrete construction). The map $\RFL_\beta$ is a bi-Lipschitz homeomorphism (and even a $C^1$-diffeomorphism since $\Gamma$ is $C^1$) and measure-preserving, i.e.\ $|\det D\RFL_\beta|=1$ a.e.\ in $V_\beta$. 
\smallskip

\noindent If $\beta\in \pa\Gamma$, a mere extension by reflection at $\wt\Gamma$ will not be sufficient. Loosely speaking, the reason is that we need to preserve the no-flux boundary condition at $\pa\Om_\pm\setminus\Gamma$ for the extended species in order to avoid uncontrolled  traces. Hence, we need a refinement, which bijectively maps the region $V\cap\Om_\sm$ to $V\cap\Om_{-\sm}$ for some neighbourhood $V$ of $\beta$ and respects the relevant boundary structures.

\paragraph{Extension across flat interface}
In this paragraph, we outline the main idea of the extension procedure in the case where the given point $\beta\in\pa\Gamma$ coincides with the origin, i.e.\ $\beta=0_{\mathbb{R}^d}$, $\pa\Gamma$ is flat near $\beta$, and $\wt\Gamma$ is a hyperplane such that 
\begin{align*}
\Gamma\cap B_1^d(\beta)&=\big(\mathbb{R}^{d-2}\times\mathbb{R}_{\le0}\times\{0_{\mathbb{R}}\}\big)\cap B_1^d(\beta),
\\\wt\Gamma\cap B_1^d(\beta)&=B_1^{d-1}(\beta)\times\{0_{\mathbb{R}}\}=\{x_d=0\}\cap B_1^d(\beta).
\end{align*} 
Let $\mathfrak R$ denote the standard reflection at $\{x_d=0\}$ given by $\mathfrak R(x',x_d)=(x',-x_d)$ for all $x=(x',x_d)\in\mathbb{R}^d$. Then, in particular, $\mathfrak R(\beta')=\beta'$ for all $\beta'\in \Gamma\cap B_1^d(\beta)$.
We now assert that, for a suitably chosen small open neighbourhood $V\subseteq B_1^d(\beta)$ of the origin $\beta$, we can define a measure-preserving bi-Lipschitz homeomorphism 
$\Phi_{-}:\Om_-\cap V\to \Om_{+}\cap V$ fixing $\Gamma\cap V$ pointwise in the trace sense
by setting $\Phi_{-}:={\wh\Phi}_{|\Om_-\cap V}$, where
\begin{align}\label{eq:reflection.general}
	\wh\Phi:=O\circ \vartheta_2\circ\vartheta_1^{-1}\circ O^{-1}\circ \mathfrak{R}.
\end{align}
Here, $O\in SO(d)$ is a suitable rotation,
 $\vartheta_i:B^{d-1}_r(0)\times(-r,r)\to W_i\subset\mathbb{R}^d, i=1,2,$ are bi-Lipschitz homeomorphisms defined via $\vartheta_i(y,s)=(y,\eta_i(y)+s)$, where the Lipschitz maps $\eta_1,\eta_2$ are  height functions that locally determine the Lipschitz domains $\mathfrak{R}(\Om_-)$ resp.\ $\Om_+$
 near $\beta$ as hypographs over $\mathbb{R}^{d-1}$ in such a way that 
 \begin{align}\label{eq:common.frame}
 \text{$\eta_1(y)=\eta_2(y)$ for all $y\in B_r^{d-1}(0)$ with $O(\vartheta_2(y,0))\in\Gamma$. }
 \end{align}
 Note that the Lipschitz regularity of $\mathfrak{R}(\Om_-)$ follows from the fact that $\mathfrak R$ is a $C^1$ diffeomorphism.
To show that property~\eqref{eq:common.frame} is achievable under hypothesis~\ref{hp:C1ext} we argue as follows:
     	 since the domains $\Om_+$ and $\mathfrak{R}(\Om_-)$ are Lipschitz regular, lie on one side of the hyperplane $\wt\Gamma$ (let's say the side containing $\{x_d<0\}\cap B_1^d(\beta)$), and since their boundaries both contain the flat portion $\ol{B_{1/2}^d}(\beta)\cap\Gamma\ni\beta$,
     	  it follows that they admit a common interior cone segment with vertex $\beta=0_{\mathbb{R}^d}$, axis of orientation $\vec{n}=(-e_{d-1}-\delta e_{d})/\sqrt{1+\delta^2}$ for $0<\delta\ll1$ small enough, and sufficiently small height $\ell>0$ and aperture $\theta>0$.
     Let $H_{\vec n}$ denote the hyperplane passing through the origin $\beta$ whose normal is $\vec n$. Then, choosing the neighbourhood $V$ of $\beta$ suitably small,  the sets $\Om_+\cap V$ and $\mathfrak{R}(\Om_-\cap V)$ can be written as hypographs over $H_{\vec n}\cap B^{d-1}_{\tilde\ve}(\beta)$ for some small $\tilde\ve>0$. 
     
     We refer to Figure~\ref{fig:ext} for an illustration of this procedure.
     The crucial point is the existence of a \textit{common} direction $\vec n$ (and fixed $O\in SO(d)$) that does not depend on $i$, which guarantees that the map $\wh\Phi$ keeps $\Gamma$ fixed pointwise, as will be required in the renormalisation.  
Let us summarise the properties of $\Phi_{-}$ which we rely on in our analysis and which will equally be obtained for the transformation in the non-flat case: 
\begin{enumerate}[label={\normalfont($\Phi$\arabic*)}]
	\item\label{it:Phi.biLipschitz.homeo} The restriction $\Phi_{-}:=\wh\Phi_{|\Om_-\cap V}$ is a bi-Lipschitz homeomorphism from $\Om_-\cap V$ to $\Om_+\cap V$.
			\item\label{it:Phi.Gamma.fix} Every $\beta'\in\Gamma\cap V$ is a fixed point of $\wh\Phi_{|\ol\Om_-\cap V}$
			and $\wh\Phi((\pa\Om_-\setminus \Gamma)\cap V)=(\pa\Om_+\setminus \Gamma)\cap V$.
	\item\label{it:Phi.measure-pres} $|\det D\Phi_{-}|\equiv1$, $\mathcal{L}^d$-a.e.\ in $\Om_-\cap V$.
	\\	This last property follows from the fact that $\det D\vartheta_i=1$ for $i=1,2$ by the specific choice of $\vartheta_i$ (see also Appendix~\ref{ssec:reflection}) in combination with the fact that $O,\mathfrak R\in O(d)$.
\end{enumerate}
	We set $\Phi_{+}:=\Phi_{-}^{-1}$. The  fact that the construction above is asymmetric w.r.t.\ $\pm$ has no significance. The choice of $\Phi_{+}$ 
	was made to simplify the set-up, ensuring in particular that $\Phi_{+}$ and $\Phi_{-}$ are mutually inverse.

 \begin{figure}[t!]
	\centering
	\begin{subfigure}[t]{0.49\textwidth}
		\centering
		\includegraphics[scale=.7]{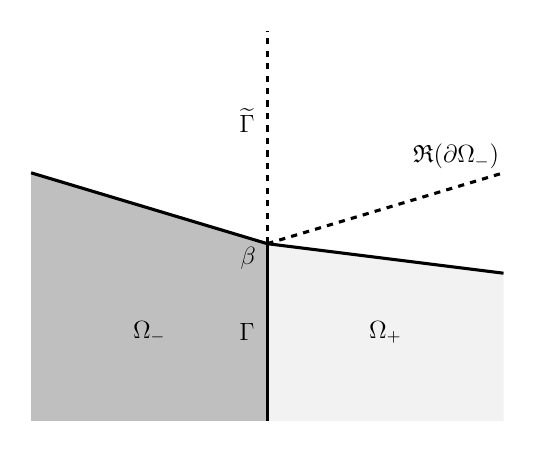}
		\caption{Local reflection of $\pa\Om_-$ near point $\beta\in\pa\Gamma$ at  flat hypersurface $\widetilde\Gamma\supset\Gamma$.}
		\label{fig:step1.bdry-reflect}
	\end{subfigure}%
	~
	\begin{subfigure}[t]{0.49\textwidth}
		\centering
		\includegraphics[scale=1]{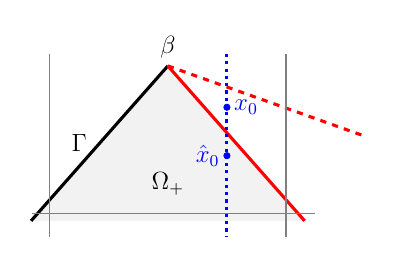}
		\caption{Measure-preserving change using Lipschitzian hypograph representation. The portion of hypograph below $\Gamma$ is kept fixed, while $\hat x_0=\vartheta_2\circ\vartheta_1^{-1}(x_0)$.}
		\label{fig:step2.bdry-reflect}
	\end{subfigure}
	\caption{Two-step extension procedure for $\beta\in\pa\Gamma$ in the case of a flat interface.}\label{fig:ext}
\end{figure}

\paragraph{General extension near interface points}

If $\wt\Gamma$ and $\pa\Gamma$ are not both flat but still $C^{1,1}$ smooth, then given $\beta\in\pa\Gamma$, one option is to simply choose a $C^{1,1}$ diffeomorphism $\Theta_\beta$ that provides a change of coordinates to the flat setting and then set $\wh\Phi_{\beta}:=\Theta_\beta\circ\wh \Phi\circ\Theta^{-1}_\beta$. (While in this case the transformation is no longer measure preserving, the uniform positivity and Lipschitz regularity of $|\det D\wh\Phi_\beta|$ would be sufficient for our theory of renormalised solutions).
However, in order to admit general $C^1$-regular hypersurfaces, we proceed directly as in~\eqref{eq:reflection.general} working with the curved $C^1$ surface. The main point that remains to be verified is the existence of a common interior cone to ensure the analogue of property~\eqref{eq:common.frame}. To this end, it suffices to notice that 
 the image of an open cone segment with vertex $0$ under a $C^1$ diffeomorphism $\Psi$ contains an open cone segment with vertex $\Psi(0)$. Concretely, after applying the standard reflection $\mathfrak R_\beta$ at the curved $C^1$ hypersurface $\wt\Gamma$ in a small open neighbourhood $V$ of the point $\beta$ (cf.\ Appendix~\ref{ssec:reflection}), the common interior cone segment for $\mathfrak R_\beta(\Om_-\cap V)$ and $\Om_+$ with vertex $\beta$ can be obtained by constructing the cone $\mathcal C$ with vertex $0$ in the image under $\wh\Phi^{-1}$, in which $\wt\Gamma$ and $\pa\Gamma$ are flat (see previous paragraph). Then $\Psi(\mathcal C)\subset\mathfrak R_\beta(\Om_-\cap V)\cap\Om_+$ contains a cone segment with vertex $\beta$, whose axis provides us with the normal direction of a hyperplane that can be used for the common hypograph representation of $ \mathfrak R_\beta(\Om_-\cap V),$ $\Om_+\cap V$, possibly after passing to a smaller neighbourhood $V$.

Finally, note that the standard reflection for interior points $\beta\in\Gamma\setminus\pa\Gamma$ can be regarded as a special case of the construction above with $\eta_1=\eta_2$. Thus, below we will use a uniform notation for the associated change of variables $\Phi_{\beta,\sm},\sm\in\{\pm\}$, for any $\beta\in\Gamma$. Importantly, all maps $\Phi_{\beta,\sm}$ satisfy properties~\ref{it:Phi.biLipschitz.homeo}--\ref{it:Phi.measure-pres}.

Given $\beta\in\Gamma$ and $w:\Om\to\mathbb{R}$, we define $\tilde w^\beta:V_\beta\cap\Om\to\mathbb{R}$ via
\begin{align}\label{eq:def.ext.c}
	\tilde w^\beta=\begin{cases}
		w^{-}\circ\Phi_{\beta,+} & \text{ in }\Om_+ \cap V_\beta,
	\\	w^{+}\circ\Phi_{\beta,-} & \text{ in } \Om_{-}\cap V_\beta.
	\end{cases}
\end{align}
Occasionally, it will be convenient to use the notation $\Phi_\beta$ for the measure-preserving bi-Lipschitz homeomorphism $\Phi_\beta:\Om\cap V_\beta\to\Om\cap V_\beta$ determined by 
\begin{align}\label{eq:def.Phi.beta}
\Phi_\beta(x)=\begin{cases}
	\Phi_{\beta,+}(x), \quad & x\in\Om_+ \cap V_\beta,
	\\\Phi_{\beta,-}(x), & x\in\Om_{-}\cap V_\beta.
\end{cases}
\end{align}
Notice that, just like standard reflection mappings, $\Phi_\beta$ is an involution, i.e.\ $\Phi_\beta^{-1}=\Phi_\beta$.

For simplicity, we use the same notation for time-dependent functions $w^\pm$, where the composition~\eqref{eq:def.ext.c} is to be understood with respect to the spatial variable, pointwise in time.
The renormalised formulation will make use of the local (generalised) reflection $\tilde u^\beta_i$ of the functions $u_i$ (cf.~\eqref{eq:def.ui}) near $\beta\in\Gamma$ described above. 

\subsection{Concept of renormalised solutions and main results}

\begin{definition}[Dissipative renormalised solutions]\label{def:renormalised.gen} Suppose~\ref{hp:geo.basic},~\ref{hp:C1ext} and~\ref{it:HP.diff}--\ref{it:HP.int} hold true.
 Let $\bfu_0=(u_{0,1}^+,\dots,u_{0,n}^+,u_{0,1}^-,\dots,u_{0,n}^-)$ with $u_{0,i}^\sm\in L^1(\Om_\sm), u_i^\sm\ge0,$ and $h(u^\sm)=\sum_{i=1}^n\mathfrak{B}(u_{0,i}^\sm)\in L^1(\Om_\sm)$ for $\sm=\{\pm\}$. 
	We call	$\bfu =(u^+,u^-)$ a renormalised solution to \eqref{eq:sys.sigma} with initial value $\bfu_0$ if, for all $\sigma\in\{\pm\}$, the following items 1.\ and 2. hold true:
	\begin{enumerate}
		\item Regularity: For $i=1,\dots,n$ it holds that 
		\begin{equation}\label{eq:reg.g}
			u_i^{\sigma} \in L^{\infty}([0,\infty); (L\log L)(\Omega_{\sigma})), \quad \nabla \sqrt{u_i^{\sigma}}\in L^2([0,\infty);L^2(\Omega_{\sigma})).
		\end{equation}
		\item Evolution equations
		\begin{enumerate}
			\item Bulk and outer boundary:
			for all $\zeta\in C^\infty([0,\infty)^n)$ with $D\zeta\in C^\infty_c([0,\infty)^n)^n$,
			 all $\psi\in C^\infty_c([0,\infty)\times(\ol\Om_\sigma\setminus\Gamma))$, and almost all $T\in(0,\infty)$,
			\begin{align}\label{eq:def.renorm.out.gen}
				\begin{aligned}
				\int_{\Om_\sigma}\psi(T,\cdot)&\,\zeta(u(T,\cdot))\,\dd x	-\int_{\Om_\sigma}\psi(0,\cdot)\,\zeta(u_0)\,\dd x-\int_0^T\!\!\int_{\Om_\sigma}\pa_t\psi\, \zeta(u)\,\dd x\dd t=
					\\&-\sum_{i=1}^n\int_0^T\!\!\int_{\Om_\sigma}\nabla(\psi  D_i\zeta(u))\cdot A_i\nabla u_i
					\,\dd x\dd t
					+\sum_{i=1}^n\int_0^T\!\!\int_{\Om_\sigma}\psi\, D_i\zeta(u)f_i(u)\,\dd x\dd t,
				\end{aligned}
			\end{align}
			where $u=(u_1,\dots,u_n)$ and $u_0$ are defined via~\eqref{eq:def.ui} from $\bfu=(u^+,u^-)$ resp.\ $\bfu_0=(u^+_0,u_0^-)$. 
			\item\label{it:renorm.int} Interface:
				for all $\xi\in C^\infty([0,\infty)^{2n})$ with $D\xi\in C^\infty_c([0,\infty)^{2n})^{2n}$, all $\beta\in\Gamma$
			with associated (generalised) local reflection map $\Phi_{\beta,\sigma}:\Om_\sigma\cap V_\beta\to \Om_{-\sigma}\cap V_\beta$ (cf.\ Section~\ref{ssec:ext}),
			 all $\psi\in C^\infty_c([0,\infty)\times V_\beta)$, and almost all $T\in(0,\infty)$,
			\begin{align}
			\int_{\Om_\sigma\cap V_\beta}&\psi(T,\cdot)\,\xi(\wt\bfu^\beta(T,\cdot))\,\dd x	-\int_{\Om_\sigma\cap V_\beta}\psi(0,\cdot)\,\xi(\wt\bfu^\beta_0)\,\dd x-\int_0^T\!\!\int_{\Om_\sigma\cap V_\beta}\pa_t\psi\, \xi(\wt\bfu^\beta)\,\dd x\dd t=
					\nonumber
					\\&-\sum_{i=1}^n\int_0^T\!\!\int_{\Om_\sigma\cap V_\beta}\nabla(\psi D_i\xi(\wt\bfu^\beta))\cdot A_i\nabla u_i	\,\dd x\dd t\label{eq:def.renorm.c.gen}
					\\&-\sum_{i=1}^n\int_0^T\!\!\int_{\Gamma_\sigma\cap V_\beta}\psi\,D_{i}\xi(\wt\bfu^\beta)r_i^\sigma(\bfu)\,\dd\mathcal{H}^{d-1}\dd t\nonumber
					\\& +\sum_{i=1}^n\int_0^T\!\!\int_{\Om_\sigma\cap V_\beta}\psi\, D_i\xi(\wt\bfu^\beta)f_i(u)\,\dd x\dd t\nonumber
					\\&-\sum_{i=1}^n\int_0^T\!\!\int_{\Om_{-\sigma}\cap V_\beta}
					\nabla \big((\psi D_{i+n}\xi(\wt\bfu^\beta)\circ\Phi_{\beta,-\sigma})\big)\cdot A_i\nabla u_i\, \dd x\dd t
					\nonumber
					\\&-\sum_{i=1}^n\int_0^T\!\!\int_{\Gamma_{-\sigma}\cap V_\beta}(\psi D_{i+n}\xi(\wt\bfu^\beta))\circ\Phi_{\beta,-\sigma}
					 r_i^{-\sigma}(\bfu)\,\dd\mathcal{H}^{d-1}\dd t\nonumber
					\\&	+\sum_{i=1}^n\int_0^T\!\!\int_{\Om_{-\sigma}\cap V_\beta} (\psi D_{i+n}\xi(\wt\bfu^\beta)\big)\circ\Phi_{\beta,-\sigma} f_i(u)\,\dd x\dd t\nonumber
			\end{align}				
			with $\wt\bfu^\beta:=(u,\tilde u^\beta)$, where $\tilde u^\beta$ is defined according to~\eqref{eq:def.ext.c} and $u=(u_1,\dots,u_n)$ via~\eqref{eq:def.ui}. 
		\end{enumerate} 
	\end{enumerate}
	If, in addition, item 3.\ below is satisfied, we call $\bfu$ a {\em dissipative renormalised solution} to~\eqref{eq:sys.sigma}:
	\begin{enumerate}[resume]
		\item Entropy dissipation inequality: for $\mathcal{L}^1$-almost all $t> 0$
		\begin{align}\label{eq:edi}
			H(u(t))+\int_0^t\Big(\sum_{\sigma\in\{\pm\}}\mathcal{D}_{\sigma}(u)+\mathcal{D}_{\textrm{\normalfont int}}(\bfu)\Big)\,\dd s\le H(u_0),
		\end{align}
		where $H(u(t))=\int_{\Om}h(u(t))\,\dd x=\sum_{\sigma\in\{\pm\}}\int_{\Om_\sm}h(u^\sm(t))\,\dd x$ and
		\begin{align*}
			\mathcal{D}_{\sigma}(u)&=\int_{\Om_\sigma}\sum_{i=1}^n4|\sqrt{A_i}\nabla\sqrt{u_i}|^2\,\dd x+\int_{\Om_\sigma}\sum_{i=1}^n\Big(-f_i(u)D_ih(u)\Big)\,\dd x,
			\\\mathcal{D}_{\textrm{\normalfont int}}(\bfu)&=\int_{\Gamma}\sum_{\sigma\in\{\pm\}}\sum_{i=1}^n r_i^\sigma(\bfu)D_{i}h(u^\sigma)\,\dd\mathcal{H}^{d-1}.
		\end{align*}
	\end{enumerate}
\end{definition}
\begin{remark}
	\begin{enumerate}[label=(\roman{*})]
		\item 	The notion of renormalised solutions introduced above is well defined, since $\wt\bfu^\beta\circ\Phi_{\beta,-\sigma}=(u,\tilde u^\beta)\circ\Phi_{\beta,-\sigma}=(\tilde u^\beta,u)$ in $\Om_{-\sigma}\cap V_\beta$
		for all $\beta \in \Gamma$.
		\item The concept of {\em dissipative} renormalised solutions to reaction--diffusion systems was first introduced in~\cite{Hopf_2022}. The point is that while along the construction of solutions the entropy-dissipation inequality (cf.~\eqref{eq:edi}) can typically be derived, it may not always be possible a posteriori. For the present model, 
		the interface conditions appear to prevent such a derivation.
		\item Let us provide a basic consistency check (assuming smooth data): any positive classical solution $\bfu=(u^+,u^-)$ to~\eqref{eq:sys.sigma} is a dissipative renormalised solution in the sense of Definition~\ref{def:renormalised.gen} with initial value $\bfu_{|t=0}$. This can be seen by a direct calculation as sketched in the introduction (cf.\ pages~\pageref{p:strategy.0}--\pageref{p:strategy.1}) using the properties of the generalised reflection mappings $\Phi_{\beta,\sm}$, cf.~\ref{it:Phi.biLipschitz.homeo}--\ref{it:Phi.measure-pres}. On the other hand, if $\bfu$ is a renormalised solution in the sense of Definition~\ref{def:renormalised.gen} that is additionally bounded, say $u^\sm_i\le E$ for all $i,\sm$, then upon choosing $\xi\in C^2_c([0,\infty)^{2n})$, $\zeta\in C^2_c([0,\infty)^{n})$ with $\xi(s,\tilde s)=s_k$, $\zeta(s)=s_k$ for $s,\tilde s\in [0,E]^{n}$,  $k=1,\dots,n$,
		in equations~\eqref{eq:def.renorm.c.gen},~\eqref{eq:def.renorm.out.gen}, glued together with a partition of unity (cf.~\eqref{eq:partition1}),  one easily arrives at a standard weak formulation of~\eqref{eq:sys.sigma}. \\We expect that the tools that we will develop below for the weak--strong uniqueness principle (including the "partition-of-unity" device) allow to conclude further, refined consistency and qualitative properties for the above solution concept. 
		For the sake of brevity, such side questions will, however, not be explicitly discussed in the present manuscript.
	\end{enumerate}
\end{remark}

Global existence based on this solution concept is fundamental to our theory.

\begin{theorem}[Existence of dissipative renormalised solutions]\label{thm:ex}
	Assume hypotheses~\ref{hp:geo.basic},~\ref{hp:C1ext} and~\ref{it:HP.diff}--\ref{it:HP.int}. Then for any initial data $\bfu_0$ that is componentwise non-negative with $H(u_0)=\sum_{\sm\in\{\pm\}}\int_{\Om_\sm}h(u^\sm_0)\,\dd x<\infty$,
	there exists a global dissipative renormalised solution $\bfu$ in the sense of Definition~\ref{def:renormalised.gen}. 
\end{theorem}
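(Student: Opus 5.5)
We follow Fischer's strategy~\cite{Fischer_2015}: solve a regularised problem with bounded nonlinearities, derive entropy estimates uniform in the regularisation, extract compactness, and pass to the limit in the renormalised formulations~\eqref{eq:def.renorm.out.gen} and~\eqref{eq:def.renorm.c.gen}, where every nonlinear term is multiplied by a compactly supported derivative of a truncation.

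\textbf{Approximation.} For $\ve>0$ we replace $f_i$ and $r_i^\sm$ by
\[
f_i^\ve(u)=\frac{f_i(u)}{1+\ve\sum_{j}|f_j(u)|},\qquad r_i^{\sm,\ve}(\bfu)=\frac{r_i^\sm(\bfu)}{1+\ve\sum_{j,\tau}|r_j^\tau(\bfu)|}.
\]
Both are bounded and locally Lipschitz. They keep quasi-positivity, entropy dissipation~\eqref{eq:f.diss},~\eqref{eq:R.diss} (a positive scalar rescaling) and mass preservation~\eqref{eq:mass.pres}. We also use mollified or truncated data $\bfu_0^\ve$ with $u_{0,i}^{\sm,\ve}\ge\ve$ and $H(u_0^\ve)\to H(u_0)$. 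Bounded nonlinearities make existence of a global non-negative weak solution $\bfu^\ve$ with $\nabla u_i^{\sm,\ve}\in L^2_{\loc}L^2$ standard, for instance by Galerkin or Schauder on $H^1(\Om_+)\times H^1(\Om_-)$ with the trace terms. Non-negativity follows from quasi-positivity by testing with negative parts. Testing formally with $\log u_i^{\ve}$, justified via $\log(u+\delta)$ and $\delta\to0$, gives~\eqref{eq:edi} for $\bfu^\ve$ with $f^\ve,r^\ve$. This yields, uniformly in $\ve$, bounds for $u^\ve$ in $L^\infty L\log L$, for $\nabla\sqrt{u^\ve}$ in $L^2L^2$, and for the nonnegative dissipation integrands $-f^\ve\cdot Dh$ and $\sum r^{\sm,\ve}\cdot Dh$ in $L^1$.

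\textbf{Compactness.} As in Fischer's argument, for each fixed truncation $\zeta$ the functions $\zeta(u^\ve)$ are bounded in $L^2H^1$. Their time derivatives are bounded in $L^1((H^m)^*)$, using the weak formulation and the boundedness of $D\zeta(u^\ve)f^\ve(u^\ve)$. Aubin--Lions then gives strong compactness of $\zeta(u^\ve)$, and a diagonal argument over truncation levels gives a.e.\ convergence $u^\ve\to u$ on $(0,T)\times\Om_\sm$.

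For the interface terms we additionally need a.e.\ convergence of the traces $\tr_\Gamma u^{\sm,\ve}$. We get it from the strong $L^2H^{1}$-type bound on $\zeta(u^\ve)$ together with compactness of the trace map from $H^1$ into $L^2(\Gamma)$, applied to truncations. Weak lower semicontinuity and Fatou then pass~\eqref{eq:edi} to the limit; for the interface dissipation we use the a.e.\ convergence of traces and the nonnegativity of the integrand. The regularity~\eqref{eq:reg.g} follows from the same bounds.

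\textbf{Limit in the renormalised equations.} For $\bfu^\ve$, which are weak solutions, the chain rule together with properties~\ref{it:Phi.biLipschitz.homeo}--\ref{it:Phi.measure-pres} of $\Phi_{\beta,\sm}$ yields exactly~\eqref{eq:def.renorm.c.gen}, with $f^\ve,r^\ve$ in place of $f,r$. The computation is the one sketched in the introduction. Since $\Phi_\beta$ is bi-Lipschitz and measure preserving, $\tilde u^{\beta,\ve}\in L^2H^1$, the change of variables has Jacobian one, and the pointwise fixing of $\Gamma$ gives the trace identities~\eqref{eq:trace.tilde}.

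Passing $\ve\to0$, every reaction and interface term contains $D\xi(\wt\bfu^{\beta,\ve})$, which has compact support. On that support $f^\ve$ and $r^\ve$ are uniformly bounded and converge locally uniformly. On $\Gamma$ the arguments of $D_i\xi$ and of $r^\sm_i$ are the same pair of traces $(\tr u^+,\tr u^-)$, up to ordering, so dominated convergence applies once the traces converge a.e.

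The diffusion terms expand into $\psi D_{ij}\xi\,\nabla u_j\cdot A_i\nabla u_i$, plus $D_i\xi\,\nabla\psi\cdot A_i\nabla u_i$, plus the corresponding terms for $\tilde u$ on the reflected domain. On the support of $D\xi$ the densities are bounded, so $\nabla u_i^\ve=2\sqrt{u_i^\ve}\nabla\sqrt{u_i^\ve}$ is bounded in $L^2$ there, and the weak $L^2$ limit of the gradients of truncations is identified.

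\textbf{Main obstacle.} The hardest step is the product of two gradients in the commutator term $D_{ij}\xi\,\nabla u_j\cdot A_i\nabla u_i$, which only converges weakly. We handle it as Fischer does. Products of gradients coupled through $D_{ij}\xi$ are expressed via $\nabla(\text{truncation})$, we obtain strong convergence of $\nabla\zeta(u^\ve)$ in $L^2$ from the energy equality for truncated quantities, and we control the defect measure by choosing truncations $\xi_E(\cdot)=\xi(\cdot)\,\theta(|\cdot|/E)$ with $E\to\infty$. The renormalised equation is first obtained up to an error bounded by the dissipation on $\{|\wt\bfu|\sim E\}$, and this error vanishes as $E\to\infty$ by the entropy bound.

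The new difficulty compared with the bulk case is that the truncation acts on $(u,\tilde u^\beta)$ jointly, across two compartments. We handle it by working on $\Om_\sm\cap V_\beta$ and pulling the $\tilde u$ parts back to $\Om_{-\sm}$ via $\Phi_\beta$, where the bulk arguments apply verbatim. Finally, the renormalised identities hold for a.e.\ $T$ by the strong convergence of $\xi(\wt\bfu^{\beta,\ve}(T))$ for a.e.\ $T$.
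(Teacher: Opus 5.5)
Your overall architecture matches the paper: a bounded regularisation of $f$ and $r^\sm$, uniform entropy bounds, a.e.\ convergence of $u^\ve$ and of its $\Gamma$-traces, and the observation that on $\supp D\xi$ both traces $\tr_\Gamma u^+,\tr_\Gamma u^-$ are bounded, so interface and reaction terms pass to the limit by dominated convergence. The gap is in the step you yourself call the main obstacle. You assert strong $L^2$ convergence of $\nabla\zeta(u^\ve)$ ``from the energy equality for truncated quantities'', but you give no argument, and none is readily available:
\begin{itemize}
\item An identity for truncations holds at the $\ve$-level. Turning it into strong convergence needs the corresponding identity for the limit, and that identity contains exactly the quadratic gradient terms whose limit is unknown. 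Only the entropy \emph{inequality} survives $\ve\downarrow0$.
\item With species-dependent $A_i$, the flux $\sum_iD_i\zeta(u)A_i\nabla u_i$ of $\zeta(u)$ is not of the form $A\nabla\zeta(u)$ for a single elliptic matrix. So the scalar arguments (Landes' time regularisation, Boccardo--Murat) do not carry over.
\end{itemize}
Avoiding this strong convergence is the whole point of Fischer's method. Without it, your second device does not help either. For a fixed admissible $\xi$, the commutator $\psi D_{ij}\xi(\wt\bfu^{\beta,\ve})\nabla u^\ve_j\cdot A_i\nabla u^\ve_i$ lives on a fixed bounded region of state space. There $\xi\,\theta(|\cdot|/E)=\xi$ once $E$ is large, so its defect does not vanish as $E\to\infty$.

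The missing idea is the two-stage argument of Lemmas~\ref{l:equation.defect-meas} and~\ref{lem:chain_rule}.
\begin{itemize}
\item First, pass $\ve\downarrow0$ only in the equations for the truncated \emph{identities} $\xi^E_i(u_\ve,\tilde u^\beta_\ve)$, storing the weak-$*$ limits of all quadratic gradient terms in signed measures $\mu^{E,\beta,\sm}_i$.
\item These defects vanish as $E\to\infty$. One has $\sqrt{u_ju_k}\,|D_{jk}\xi^E_i|\lesssim1$ uniformly and $D^2\xi^E_i\to0$ locally uniformly. Splitting the Fisher information into level sets $\{K-1\le|\wt\bfu^\beta|_1<K\}$, taking $\ve\downarrow0$ first and then applying dominated convergence in $K$, gives $|\mu^{E,\beta,\sm}_i|([0,T)\times\ol{V_\beta\cap\Om_\sm})\to0$.
\item The equations for the components $i+n$ are obtained by transporting the $i$-th equation via $\Phi_\beta$ and pushing forward the measure.
\item Then a weak chain rule is applied to the \emph{limit} system for $\boldsymbol\xi^E(\wt\bfu^\beta)\in L^2H^1$. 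For every admissible $\xi$, the function $\xi(\boldsymbol\xi^E(\wt\bfu^\beta))$ satisfies the renormalised identity up to an error $\lesssim\|\psi\|_\infty\|D\xi\|_\infty\sum_{i,\pm}|\mu^{E,\beta,\pm}_i|$. The commutator is now built from limit functions only, and letting $E\to\infty$ gives~\eqref{eq:def.renorm.c.gen}.
\end{itemize}

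A smaller point concerns compactness. Your time-derivative bound for $\zeta(u^\ve)$ ignores the interface term $\int_\Gamma\psi D_i\zeta(u^{\sm,\ve})r^{\sm,\ve}_i(\bfu^\ve)$. This term is not uniformly bounded, because $D\zeta(u^\sm)$ does not truncate $u^{-\sm}$; this is exactly the issue raised in the introduction. You can repair it by testing only with functions vanishing on $\Gamma$ (Aubin--Lions with $(H^m_0(\Om_\sm))^*$). Alternatively, as the paper does, use the joint truncations $\xi^E_i(u_\ve,\tilde u^\beta_\ve)$ near $\Gamma$.
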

The proof of Theorem~\ref{thm:ex}, which adapts~\cite{Fischer_2015}, is postponed to Section~\ref{sec:existence}. We remark that, along the construction, it is possible to derive a stronger version of the entropy-dissipation inequality, that is additionally valid for a.e.\ $0<s<t$, which is important for obtaining convergence rates to equilibrium (see e.g.~\cite[Propositions 2.1, 2.2]{Hopf_2022}).

\begin{remark}[Regularity hypotheses on the geometry]$ $
	The extra assumption of $C^1$-regularity of the interface was made for simplicity and can be relaxed.  In principle, it is only needed in a neighbourhood of points $p\in\pa\Gamma$ to ensure the existence of the generalised reflection map $\Phi_{
		\beta}$ enjoying the required hypotheses. Moreover,
	in the topological setting of Figure~\ref{fig:annular}, we can use the standard reflection technique at the Lipschitz boundary of $\Om_+$ at any point $z\in\Gamma$. Hence, in this case our main results, Theorems~\ref{thm:unique},~\ref{thm:ex}, remain valid without assumption~\ref{hp:C1ext}.
\end{remark}	

A key motivation for the notion of a dissipative renormalised solution introduced above is the fact that it not only allows for global solvability but also enjoys a weak--strong uniqueness principle.

\begin{definition}[Strong solution]\label{def:strong}
We call $\bfU:=(U^+,U^-)$, $U^\sigma\in C^{0,1}_{\loc}([0,T^*)\times\ol\Om_\sigma)$, $\sigma\in\{\pm\}$, $U_{|\Om_\pm}:=U^\pm$, a strong solution to~\eqref{eq:sys.sigma} on $(0,T^*)$ if $\min_{[0,T]\times\ol\Om_\pm} U^\pm>0$ for all $T<T^*$
and if
 $\boldsymbol{U}$ satisfies~\eqref{eq:sys.sigma} in the weak sense on $(0,T^*)$, i.e.\ for $i=1,\dots,n$, for $\sm\in\{\pm\}$, and all $\psi\in L^1(0,T;W^{1,1}(\Om_\sm))$
\begin{align}\label{eq:strongsol.weak}
	\int_{\Om_\sigma}\psi\,\pa_tU_i\,\dd x\dd t=
	-\int_0^T\!\int_\Gamma \psi\,r_i^\sigma(\bfU)\,\dd x\dd t
-\int_0^T\!\int_{\Om_\sigma}\nabla\psi\cdot A_i\nabla U_i
\,\dd x\dd t
+\int_0^T\!\int_{\Om_\sigma}\psi\, f_i(U)\,\dd x\dd t
\end{align}
for every $T\in(0,T^*)$.
\end{definition}

\begin{theorem}[Weak--strong uniqueness]\label{thm:unique}
	Assume hypotheses~\ref{hp:geo.basic},~\ref{hp:C1ext} on the geometry and~\ref{it:HP.diff}--\ref{it:HP.int} on the diffusion coefficients and nonlinearities, and let $\bfu_0$ be measurable, non-negative componentwise, and satisfy $H(u_0)=\sum_{\sm\in\{\pm\}}\int_{\Om_\sm}h(u^\sm_0)\,\dd x<\infty$.  Further adopt the notations in Section~\ref{ssec:ext}.
	Let $\bfu$ be a dissipative renormalised solution with initial data $\bfu_0$ in the sense of Definition~\ref{def:renormalised.gen}, let $T^*\in(0,\infty]$ and let  $\boldsymbol{U}:=(U^+,U^-)$, $U^\sigma\in C^{0,1}_{\loc}([0,T^*)\times\ol\Om_\sigma)$, $\sigma\in\{\pm\}$, $U_{|\Om_\pm}:=U^\pm$, be a strong solution with $U(0)=u_0$. Then $u=U$ $\mathcal{L}^{1+d}$-a.e.\ in $(0,T^*)\times\Om$.
\end{theorem}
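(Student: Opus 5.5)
The proof is a relative entropy argument in the spirit of Fischer's weak--strong uniqueness for renormalised solutions. We compare $u$ and $U$ through a truncated, non-local variant of the relative entropy
\[
\mathcal{H}(u|U)=\int_\Om\sum_{i=1}^n\big(u_i\log\tfrac{u_i}{U_i}-u_i+U_i\big)\,\dd x .
\]
Write it as $H(u)-\int_\Om\sum_i(u_i\log U_i - U_i)\,\dd x$. The term $H(u(t))$ is controlled by the entropy dissipation inequality~\eqref{eq:edi}, so only the linear cross term $\int_\Om u_i\log U_i\,\dd x$ needs an evolution identity. Since renormalised solutions have no weak formulation, we obtain it from the renormalised equations.

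\textbf{Truncation.} Fix $E\gg1$ and choose truncations $\zeta^E(s)=\sum_i \log U_i\,\theta_E(s)s_i$ in the bulk, and $\xi^E(s,\tilde s)$ at the interface. Here $\theta_E$ is a cut-off equal to $1$ when $|s|_1\le E$ and vanishing when $|s|_1\ge 2E$; at the interface it acts simultaneously on $(s,\tilde s)$. Because $\log U_i$ depends on $(t,x)$, we absorb it into the test function $\psi$; the resulting extra time and space derivatives of $\log U_i$ are bounded on $[0,T]\times\ol\Om$ for $T<T^*$ by the Lipschitz regularity and strict positivity of $U$.

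\textbf{Gluing.} Cover $\Gamma$ by finitely many neighbourhoods $V_{\beta_k}$ on which the generalised reflections $\Phi_{\beta_k}$ exist (compactness of $\Gamma$ and (G2)). Add one set covering $\ol\Om_\sm\setminus\Gamma$, and take a subordinate partition of unity $\{\psi_k\}$. Apply~\eqref{eq:def.renorm.c.gen} with $\psi=\psi_k\log U_i$ and $\xi=\xi^E$ near the interface, and~\eqref{eq:def.renorm.out.gen} away from it. Summing over $k$ and $\sm$ yields an identity for $\int_\Om\theta_E\, u_i\log U_i$. The crucial point is that the terms involving $D_{i+n}\xi^E$, transported back via $\Phi_{\beta,-\sm}$, recombine with the terms from the other compartment. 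The measure-preservation property~\ref{it:Phi.measure-pres} and the involution property $\Phi_\beta^{-1}=\Phi_\beta$ are used here, so that on $\{|u|_1+|\tilde u|_1\le E\}$ one obtains exactly the weak formulation tested with $\log U_i$. This includes the interface term $\sum_\sm\int_\Gamma r_i^\sm(\bfu)\log U_i^\sm$.

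\textbf{Combination.} Combine this with~\eqref{eq:edi} and with the weak formulation~\eqref{eq:strongsol.weak} for $U$, tested with $\psi=u_i/U_i$ (suitably truncated) and with $\psi=1$. The standard algebra gives
\begin{align*}
\mathcal{H}(u|U)(t)+\int_0^t\!\!\int_\Om\sum_i \mathfrak a\,u_i\Big|\nabla\log\tfrac{u_i}{U_i}\Big|^2
\le \int_0^t\Big(\mathcal{R}_{\rm bulk}+\mathcal{R}_\Gamma+\mathcal{R}_E\Big)\,\dd s .
\end{align*}
\begin{itemize}
\item $\mathcal R_{\rm bulk}$ collects the reaction terms $-\sum_i\big(f_i(u)-f_i(U)\big)(\log u_i-\log U_i)$ plus lower-order differences.
\item $\mathcal R_\Gamma$ is the analogous interface term, $-\sum_{\sm,i}\big(r_i^\sm(\bfu)-r_i^\sm(\bfU)\big)\big(\log u_i^\sm-\log U_i^\sm\big)$ plus linear corrections.
\item $\mathcal R_E$ collects the truncation errors.
\end{itemize}

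\textbf{Estimating the nonlinear remainders.} We argue as in Fischer. On the set where $|u|$ is bounded by a large constant $M$, the local Lipschitz continuity of $f_i$, $r_i^\sm$ and the bounds on $U$ give estimates by $C_M\,\mathcal H(u|U)$ in the bulk. At the interface they give $C_M\int_\Gamma|\sqrt{u}-\sqrt U|^2$. This trace term is bounded by $\delta\|\nabla(\sqrt u-\sqrt U)\|_{L^2}^2+C_\delta\|\sqrt u-\sqrt U\|_{L^2}^2$, and is then absorbed by the dissipation together with $\mathcal H(u|U)$. On the set where $|u|>M$, we use the entropy dissipation itself. The dissipation terms $-\sum f_i(u)D_ih(u)$ and $\sum r_i^\sm D_ih$ in~\eqref{eq:edi} are nonnegative and appear with their full size. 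Meanwhile $\log U$ is bounded, so $|f_i(u)\log U_i|\le C(-f(u)\cdot D h(u))$ up to an error supported where $u$ is large. That error is in turn controlled by $\mathcal H(u|U)$, since the relative entropy is superlinear in $u$ for bounded $U$. The same holds at the interface, by quasi-positivity and the sign property~\eqref{eq:R.diss}.

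\textbf{Main obstacle.} I expect the hardest step to be $\mathcal R_E\to0$ as $E\to\infty$ at the interface. The derivative $D\xi^E$ produces terms $\nabla\theta_E\cdot A\nabla u\,u_i$ on $\{E\le |u|+|\tilde u|\le2E\}$, which are handled via $\nabla\sqrt u\in L^2$ as in Fischer. It also produces interface terms $\theta_E'(\cdots) r_i^\sm(\bfu)$, which are only controlled because the interface dissipation in~\eqref{eq:edi} gives integrability of $r_i^\sm(\bfu)\log u_i$ on $\Gamma$. We would first try to bound $|r_i^\sm|\mathbf 1_{\{|\bfu|\ge E\}}$ by the dissipation density $\sum r^\sm D h$ plus $C(1+|\bfu|)$, using a Lipschitz-in-$\log$ structure, and then pass to the limit by dominated convergence.

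\textbf{Conclusion.} The resulting inequality is $\mathcal H(u|U)(t)\le C_T\int_0^t\mathcal H(u|U)\,\dd s$ with $\mathcal H(u|U)(0)=0$. Gronwall's inequality gives $u=U$ a.e.\ on $(0,T)\times\Om$ for every $T<T^*$, which proves Theorem~\ref{thm:unique}.
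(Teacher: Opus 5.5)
\textbf{There is a genuine gap.} Your plan removes the truncation by letting $E\to\infty$, and this is exactly what is not available for renormalised solutions.

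The entropy dissipation inequality~\eqref{eq:edi} gives integrability of $-\sum_i f_i(u)\log u_i$ and of $\sum_{\sm,i}r_i^\sm(\bfu)\log u_i^\sm$ on $\Gamma$. It does not give $f_i(u)\in L^1$ or $r_i^\sm(\bfu)\in L^1$. Away from $\{|u|_1+|\tilde u|_1\le E\}$, the terms $\int\!\int f_i(u)\log U_i$ and $\int\!\int_\Gamma r_i^\sm(\bfu)\log U_i^\sm$ in your ``weak formulation tested with $\log U_i$'' therefore need not be defined. Likewise, the truncation errors in $\mathcal R_E$ from $D\xi^E$ on the layer $\{E\le|\cdot|_1\le 2E\}$ are of size $|f(u)|$ resp.\ $|r(\bfu)|$ there, and need not tend to zero.

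The pointwise bounds you propose as a rescue are false under \ref{it:HP.react}--\ref{it:HP.int}. Take a mass-action rate $f\propto a-b$ with monomials $a=u^\alpha$, $b=u^\beta$. The dissipation density is $\propto(a-b)\log(a/b)$, which stays of order one along $a=b+\sqrt b$, while $|f|\propto\sqrt b$ is superlinear once $b$ has total degree larger than two. So $|f|$ is not controlled by $-f\cdot Dh(u)$ plus an error bounded by $\mathcal H(u|U)$, since $\mathcal H(u|U)$ only controls $u\log u$. The same construction shows that $|r_i^\sm|$ is not bounded by the interface dissipation plus $C(1+|\bfu|)$. Your estimates of $\mathcal R_{\rm bulk}$, $\mathcal R_\Gamma$, $\mathcal R_E$ therefore do not close for the superlinear rates the theorem is designed for.

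\textbf{What is missing.} You need to keep the truncation at a fixed large level and run Gronwall on the truncated functional $H_\rel(\wh\bfu|U)$ itself. In it, $\log U_j$ multiplies $\chi(x,\wh\bfu)u_j$ with $\chi=\vp_{\rm out}\zeta^*(u)+\sum_\beta\vp_\beta\xi^*(u,\tilde u^\beta)$. Two ingredients absent from your plan are then required.

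First, the cut-off must be logarithmic between $E$ and $E^N$, so that $|D\xi^*|\lesssim 1/(N|\wt\bfu|_1)$ and $|D^2\xi^*|\lesssim 1/(N|\wt\bfu|_1^2)$.
\begin{itemize}
\item With your linear cut-off between $E$ and $2E$, the commutator terms are of the same order as the diffusive dissipation, with a constant that is not small. They cannot be absorbed at fixed $E$.
\item With the logarithmic cut-off they are $\frac CN\big(d_\rel(u|U)+d_\rel(u|U)\circ\Phi_\beta\big)$. The reflected part is absorbed after the measure-preserving change of variables.
\item All other terms live where $|\wt\bfu^\beta|_1\le E^N$, so $f$ and $r$ are bounded there. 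Off the good set they are bounded by a small multiple of the dissipation plus $C(E,N)\mathcal L^d(\Om\setminus S_g^{\wh\bfu})$. On $\Gamma$ they are bounded by $C(E,N)\sum_{\sm,i}|\sqrt{u_i^\sm}-\sqrt{U_i^\sm}|^2$, using the sign of the interface dissipation.
\end{itemize}

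Second, and this is the genuinely new interface difficulty, you need coercivity of the truncated functional.
\begin{itemize}
\item Because $\chi$ depends on $\tilde u^\beta$, the density $h_\rel$ can be negative where $u$ is small but $\tilde u^\beta$ is large: there the term $-u_j\log U_j$ is switched off.
\item The paper handles this non-locally. $\Phi_\beta$ maps this bad set measure-preservingly into $\{|u|_1>E/2\}$, where $h_\rel\gtrsim E\log E$.
\item This gives $H_\rel\gtrsim E\log E\,\mathcal L^d(S_b^{\wh\bfu})$, and hence $H_\rel\gtrsim\int_\Om\sum_i|\sqrt{u_i}-\sqrt{U_i}|^2$ and $H_\rel\gtrsim E\log E\,\mathcal L^d(\Om\setminus S_g^{\wh\bfu})$.
\end{itemize}

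These bounds, together with the trace interpolation, close the Gronwall argument. Since $u_0=U_0$ is bounded, $H_\rel(\wh\bfu_0|U_0)=0$ for $E$ large, and coercivity then yields $u=U$.
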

Theorem~\ref{thm:unique} will be obtained as a consequence of a stability estimate that controls the deviation from a strong solution in terms of the distance of the initial data up to a multiplicative constant that only depends on the Lipschitz norm and distance from zero of the strong solution, see Proposition~\ref{prop:stab.gen}. As a generalised metric, we will use a suitably truncated version of the classical relative entropy $H_{\rel}^s(u|U):=H(u)-H(U)-\la DH(U),u{-}U\ra$.
Recalling that a distributional formulation is generally not available for renormalised solutions,
just like in the renormalisation of bulk processes a truncation needs to be introduced in the linear correction term to $H(u)$ arising in $H_{\rel}^s(u|U)$ to give a meaning to the evolution of this functional along renormalised solutions $u$ with respect to the strong solution $U$ (cf.~\cite{Fischer_2017,CJ_wkstr-unique_2019,Hopf_2022}), at least in the sense of a (sharp) inequality. 
The main new point in the case of nonlinear interface conditions is that the truncation must also take into account the extensions $\tilde u^\beta$ from the neighbouring compartment, which are only defined locally. We address this by means of a suitable partition of unity.
Furthermore, to establish the necessary coercivity properties of 
the adjusted relative entropy functional, we need to adopt a partially non-local perspective, since the possible occurrence of large values of $\tilde u^\beta$ prevents us from relying on the local convexity of the standard relative entropy density. These core preliminary results will be developed in Section~\ref{ssec:adjrelen.coer}.

\begin{remark}[Existence of a strong solution]
	In order to ensure the short-time existence of a strong solution in the sense of Definition~\ref{def:strong}, stronger regularity hypotheses generally need to be imposed on the data including the domains $\Om_\pm$. 
	It is an open question whether a version of the weak--strong uniqueness principle persists under less restrictive hypotheses on the strong solution such as those arising in Disser's framework~\cite{Disser_2020_bulk-interface}, which covers Lipschitz domains.
\end{remark}

\section{Weak--strong uniqueness}\label{sec:weakstruniq} 
Our weak--strong uniqueness principle extends the strategy in~\cite{Fischer_2017,Hopf_2022}.
Throughout this section, we assume hypotheses~\ref{hp:geo.basic},~\ref{hp:C1ext} and~\ref{it:HP.diff}--\ref{it:HP.int}, and suppose that $\bfu_0=(u^+_0,u^-_0)$ is non-negative componentwise, i.e.\ $u^\sm_{0,i}\ge0$ for $i=1,\dots,n$ and $\sm\in\{\pm\},$ and satisfies $\sum_{\sm\in\{\pm\}}\int_{\Om_\sm}h(u^\sm_0)\,\dd x<\infty$. 
Moreover, {\em unless specified otherwise}, we assume that $\bfU=(U^+,U^-)$ 
 is a  strong solution in the sense of Definition~\ref{def:strong} for some $T^*\in(0,\infty]$.
  We let $\ol T\in(0,T^*)$ be fixed but arbitrary. 
We then set \[\iota:=\min_i\inf_{[0,\ol T]\times\Om} U_i>0\qquad\text{ and }\qquad\mathfrak I:=\max_{\sm\in\{\pm\}}\max_{i=1,\dots,n}\sup_{t\in[0,\ol T]}\|U_i^\sm(t)\|_{C^{0,1}(\Om_\sm)}<\infty.\]
The dependence on $\iota,
\mathfrak I$ of (hidden) constants in the estimates below will not be explicitly indicated.
Note that 
$\max_{i,\sigma}\|\nabla \sqrt{U_i^\sigma}\|_{L^\infty([0,\ol T]\times\Om_\sigma)}\le C,$ where $C=C(\iota,\mathfrak{I})<\infty$, which follows from the identity $\nabla \sqrt{U_i^\sigma}=\frac{1}{2\sqrt{U^\sigma_i}}\nabla U_i$. 

\subsection{Relative entropies and coercivity estimates}\label{ssec:adjrelen.coer}

\paragraph{Partition of unity}

As we have shown in Section~\ref{ssec:ext}, for all $\beta\in\Gamma$, there exists an open neighbourhood $V_\beta\subset\mathbb{R}^d$ of $\beta$ that admits generalised local reflection maps $\Phi_{\beta,\sm}:\Om_{\sm}\cap V_\beta\to \Om_{-\sm}\cap V_\beta$, $\sm\in\{\pm\}$, with the property that  $\Phi_{\beta,-\sm}=\Phi_{\beta,\sm}^{-1}$.
Since $\Gamma$ is compact, the covering $\cup_{\beta}V_\beta\supset\Gamma$ has a finite subcovering,  i.e.\ there exists a finite set of points $\frakP=\{\beta_1,\dots,\beta_P\}\subset \Gamma$ such that 
$\Gamma\subset\cup_{\beta\in\frakP}V_\beta$.
We then choose smooth non-negative functions $0\le\vp_{\beta}\le1,\beta\in\frakP$, and $\vp_{\text{\rm out}}$, supported in $V_{\beta}$ and in $\ol\Om\setminus\Gamma$ respectively, in such a way that we obtain a partition of unity subordinate to the above covering of $\ol\Om=(\ol\Om\setminus\Gamma)\bigcup(\cup_{\beta\in\frakP}V_\beta):$
\begin{align}\label{eq:partition1}
	\vp_{\text{\rm out}}+\sum_{\beta\in\frakP}\vp_{\beta}
	\equiv 1\quad\text{on }\ol\Om=\ol\Om_+\cup\ol\Om_-.
\end{align}

\paragraph{Adjusted relative entropy}Given two sufficiently large parameters $E,N\ge2$ to be determined later,
define the function $\hat\xi:\mathbb{R}\to[0,1]$ via
\begin{align*}
	\hat\xi(r)=\eta\left(\frac{\log r-\log E}{\log E^N-\log E}\right),
\end{align*}
where $\eta\in C^\infty(\mathbb{R})$ is a fixed function with $\eta(s)=1$ for $s\le0$, $\eta(s)=0$ for $s\ge1$, and $\eta'\le0$ with $\eta'(s)<0$ for all $s\in(0,1)$.

We can then define $\xi^*\in C^\infty_c([0,\infty)^{2n})$ by
\begin{align}\label{eq:regions}
	\xi^*(\wt\bfu)=\begin{cases}
		1\qquad&\text{if }\wt \bfu\in \mathcal{A}:=\{\wt \bfv=(v,\tilde v)\in[0,\infty)^{2n}\mid|\wt \bfv|_1:=\sum_{i=1}^nv_i+\sum_{i=1}^n\tilde v_i\le E\},
		\\ \hat\xi(|\wt \bfu|_1)&\text{if }\wt\bfu\in \mathcal{B}:=\{E<|\wt\bfv|_1\le E^N\}, 
		\\0&\text{if }\wt \bfu\in \mathcal{C}:=\{|\wt \bfv|_1>E^N\},
	\end{cases}
\end{align}
and 
$\zeta^*\in C^\infty_c([0,\infty)^{n})$ via
\begin{align*}
	\zeta^*(u)=\begin{cases}
		1\qquad&\text{if }|u|_1:=\sum_{i=1}^nu_i\le E,
		\\ \hat\xi(|u|_1)&\text{if }E<|u|_1\le E^N,
		\\0&\text{if }|u|_1>E^N.
	\end{cases}
\end{align*}

Observe that for all $\wt\bfu\in [0,\infty)^{2n}$ and $u\in [0,\infty)^{n}$
\begin{subequations}\label{eq:decay.trunc}
	\begin{align}\label{eq:decay.xi}
		|D\xi^*(\wt\bfu)|\lesssim\frac{1}{N|\wt\bfu|_1}, \qquad |D^2\xi^*(\wt\bfu)|\lesssim\frac{1}{N|\wt\bfu|_1^2},
		\\\label{eq:decay.zeta}
		|D\zeta^*(u)|\lesssim\frac{1}{N|u|_1}, \qquad |D^2\zeta^*(u)|\lesssim\frac{1}{N|u|_1^2}.
	\end{align}
\end{subequations}
where the constants hidden in $\lesssim$ are independent of $E,N\ge2$.
 Furthermore, the strict decrease of $\eta$ on $(0,1)$ implies the technical properties
	\begin{subequations}
\begin{alignat}{3}\label{eq:xi.less.1}
		&\xi^*(\wt\bfu)<1\qquad&&\text{for }\wt\bfu\in\mathcal{B}\cup\mathcal{C},
		\\&\zeta^*(u)<1&&\text{for }|u|_1>E.\label{eq:zeta.less.1}
\end{alignat}
	\end{subequations}

We are now in a position to define an appropriately truncated relative entropy density
\begin{align*}
h_\rel(x;\wh\bfu|U):=
h(u)-\sum_{j=1}^n\log U_j\XIH(x,\wh\bfu)u_j
+\sum_{j=1}^nU_j,\quad x\in\Om,
\end{align*}
where here $U\in[\iota,\ol U]^n$, and 
\begin{subequations}\label{eq:def.whbfu.XIH}
\begin{align}
\wh\bfu&:=(u,(\tilde u^\beta)_{\beta\in\frakP})\in [0,\infty)^{n(1+\PP)},
\\\XIH(x,\wh\bfu)=\XIH(x,
(u,(\tilde u^\beta)_{\beta\in\frakP}))\label{eq:def.whbfu}
&:=\vp_{\text{\rm out}}(x)\zeta^*(u)+
\sum_{\beta\in\frakP}\vp_\beta(x)\xi^*(u,\tilde u^\beta)
\end{align}
\end{subequations}
with $\PP=\#\frakP$.

\paragraph{Coercivity properties}
We partition $\Om\times[0,\infty)^{n(1+\PP)}$ into the three sets
\begin{align*}
	S_g&:=\{(x,\wh\bfu)\in\Om\times[0,\infty)^{n(1+\PP)}\mid \XIH(x,\wh\bfu)=1\},
	\\S_p&:=\{(x,\wh\bfu)\in\big(\Om\times[0,\infty)^{n(1+\PP)}\big)\setminus S_g\mid |u|_1\ge E/2\}=\big(\Om\times
	\{ |u|_1\ge E/2\}
	\times[0,\infty)^{n\cdot\PP}\big)\setminus S_g,
	\\S_b&:=\{(x,\wh\bfu)\in\big(\Om\times[0,\infty)^{n(1+\PP)}\big)\setminus S_g\mid |u|_1< E/2\}
	=\big(\Om\times\{ |u|_1<E/2\}\times[0,\infty)^{n\cdot\PP}\big)\setminus S_g,
\end{align*}
where we continue to use the notation~\eqref{eq:def.whbfu.XIH}.
 As we will detail below, for points $(x,\wh\bfu)$ in the `good set' $S_g$, the adjusted relative entropy density $h_\rel(x;\wh\bfu|U)$ agrees with the standard relative entropy function
$h_\rel^s(u|U)$, from which the relevant coercivity properties are inherited. For points $(x,\wh\bfu)$ in $S_p$, where $\sum_{i=1}^nu_i$ takes large positive values, coercivity is a consequence of the superlinear growth of $h(u)$ as $|u|_1\to\infty$. When $(x,\wh\bfu)$ lies in the `bad set' $S_b$, we only know that the reflected density cannot be small, which requires a non-local argument to conclude an appropriate coercivity-type bound (cf.\ Proposition~\ref{prop:coerc.relen-functional}). This is the main new source of difficulty in the development of the relative entropy method and the derivation of a weak--strong stability estimate.

 We first gather some preliminary observations.
\begin{lemma}[Partial coercivity of truncated relative entropy density]\label{l:coerc.relen}$ $Let  $0<\iota<\ol U<\infty$ and $U\in [\iota,\ol U]^{n}$.
There exists a threshold $\ul E\gg 1$ (depending on $\ol U$, $\iota$, and fixed data) 
such that for $E\ge \ul E$
	\begin{enumerate}[label={\rm(\roman*)}]
		\item
		 For all $(x,\wh\bfu)\in S_g:$ $h_\rel(x;\wh\bfu|U)=h_\rel^s(u|U)\gtrsim\sum_{i=1}^n|\sqrt{u_i}-\sqrt{U_i}|^2$.
		\item For all 
		 $(x,\wh\bfu)\in S_p:$ $h_\rel(x;\wh\bfu|U)\ge\frac12\sum_iu_i\log u_i 	\gg 1$.
	\end{enumerate}
\end{lemma}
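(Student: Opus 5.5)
For (i), I would start from the observation that on $S_g$ we have $\XIH(x,\wh\bfu)=1$. In that case $h_\rel(x;\wh\bfu|U)=h(u)-\sum_j\log U_j\,u_j+\sum_jU_j$. Since $h(U)-\sum_j D_jh(U)U_j=-\sum_jU_j$ and $D_jh(U)=\log U_j$, this is exactly $h^s_\rel(u|U)=\sum_j\big(u_j\log(u_j/U_j)-u_j+U_j\big)$. The claimed lower bound is then the classical scalar inequality $a\log(a/b)-a+b\ge(\sqrt a-\sqrt b)^2$ for $a\ge0$, $b>0$. To see it, put $a=bt^2$; the inequality reduces to $g(t):=2t^2\log t-t^2+1-(t-1)^2\ge0$. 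This holds because $g(1)=0$, and $g'(t)=4t\log t-2+2/t\cdot t$-type terms give $g'(t)=4t\log t+2-2t\cdot 1\ldots$; more simply, $\phi(t)=t^2\log t^2-t^2+1-(t-1)^2$ satisfies $\phi'(t)=4t\log t-2t+2$, which is $\ge0$ for $t\ge1$ and $\le0$ for $t\le1$, since $4t\log t\ge 2t-2$ follows from $\log t\ge1-1/t$ (used with the appropriate sign in each regime). Summing over $j$ gives (i), with a constant independent of $E$. This part needs no threshold.

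For (ii), I would use $0\le\XIH\le1$: since $\zeta^*,\xi^*\in[0,1]$ and the $\vp$'s form a partition of unity, $\XIH$ is a convex combination of values in $[0,1]$. Therefore $-\log U_j\,\XIH\,u_j\ge-|\log U_j|u_j\ge -L u_j$ with $L:=\max\{|\log\iota|,|\log\ol U|\}$, and trivially $\sum_jU_j\ge0$. This yields
\[
h_\rel\ge\sum_j\big(u_j\log u_j-(1+L)u_j\big).
\]
It remains to absorb the linear part. On $S_p$ we have $|u|_1\ge E/2$, so some component satisfies $u_k\ge E/(2n)$. For components with $u_j\le e^{2(1+L)}$, the term $u_j\log u_j-(1+L)u_j$ is bounded below by a constant $-c_0$ that depends only on $L$. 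For components with $u_j\ge e^{2(1+L)}$, we get $u_j\log u_j-(1+L)u_j\ge\frac12u_j\log u_j$. Hence
\[
h_\rel\ge\tfrac12\sum_ju_j\log u_j-n c_1,
\]
where $c_1$ additionally absorbs the bounded quantity $\frac12 u_j\log u_j$ for small $u_j$ (which is $\ge -1/(2e)$ and $\le$ const). A constant loss of this kind does not give the stated inequality directly. To fix it, I would refine the absorption using the large component: $\tfrac12\sum_j u_j\log u_j\ge \tfrac12 u_k\log u_k-\tfrac{n}{2e}\ge \tfrac{E}{4n}\log\tfrac{E}{2n}-\tfrac n{2e}$, which tends to infinity as $E\to\infty$. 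So I would split off a margin: for $E\ge\ul E$ large, use $\sum_j(u_j\log u_j-(1+L)u_j)\ge \tfrac12\sum_ju_j\log u_j + \big[\tfrac12\sum_j u_j\log u_j-(1+L)|u|_1\big]$. The bracket is nonnegative once $E$ is large. Indeed, the large components have $\log u_j\ge 2(1+L)$ and contribute nonnegatively, while the small ones contribute at least $-C(L)$. This loss is compensated because $u_k\ge E/(2n)$ with $\log u_k\gg 1$: the excess $\tfrac12u_k(\log u_k-2(1+L))\ge \tfrac{E}{4n}$ exceeds $nC(L)$ for $E\ge\ul E$. The conclusion $\gg1$ then follows from the lower bound on $\frac12 u_k\log u_k$ shown above. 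The only mildly delicate step is this bookkeeping of the constant loss against the large component; everything else is elementary.
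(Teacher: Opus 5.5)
Your proposal is correct in substance. For (ii) it follows the paper's argument: use $0\le\chi\le1$ to bound the truncated linear term by $C(\iota,\overline U)|u|_1$, then absorb it via the superlinearity of $s\log s$ on $\{|u|_1\ge E/2\}$. Your bookkeeping with the largest component $u_k\ge E/(2n)$ delivers exactly the stated form $\frac12\sum_iu_i\log u_i$, whereas the paper only records a bound $c_n|u|_1\log|u|_1$. Jensen's inequality $\sum_ju_j\log u_j\ge|u|_1\log(|u|_1/n)$ would reduce your bracket estimate to one line.

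For (i) your route genuinely differs from the paper's. The paper observes that $\chi=1$ forces $|u|_1\le E$. It then uses strong convexity of $h$ on that bounded set to get $h^s_{\mathrm{rel}}(u|U)\gtrsim_E|u-U|^2$. Finally it compares $|u-U|^2$ with $\sum_i|\sqrt{u_i}-\sqrt{U_i}|^2$ by separating the cases $u\in[\iota/2,\infty)^n$ and $u_{i_0}<\iota/2$, so its constant depends on $E$. Your Hellinger-type inequality $a\log(a/b)-a+b\ge(\sqrt a-\sqrt b)^2$ gives the bound pointwise with constant $1$, uniformly in $E,\iota,\overline U$, and without needing $|u|_1\le E$. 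This is simpler, and it fits better with the later claim in Proposition~\ref{prop:coerc.relen-functional} that the hidden constants are independent of $E,N$.

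One step in your verification of that scalar inequality is false, although the inequality itself is true. The derivative $\phi'(t)=4t\log t-2t+2$ is not $\le0$ on all of $(0,1]$: $\phi'(0^+)=2>0$. The bound $\log t\le t-1$ only gives $\phi'(t)\le 2(2t-1)(t-1)$, hence $\phi'\le0$ only on $[1/2,1]$. Replace the monotonicity argument by the factorisation $\phi(t)=2t^2\log t-2t^2+2t=2t\,(t\log t-t+1)$. This is nonnegative because $t\log t-t+1\ge0$ for $t\ge0$. Also drop the garbled line defining $g'$.
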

\begin{proof}[Proof of Lemma~\ref{l:coerc.relen}]
	Re (i): the identity
	\[h_\rel(x,\wh\bfu|U)=h_\rel^s(u|U):=h(u)-\sum_{j=1}^nD_jh(U)(u_j-U_j)-h(U)
	=h(u)-\sum_{j=1}^nD_jh(U)u_j+\sum_{j=1}^nU_j\]
	for $(x,\wh\bfu)\in S_g$ follows by construction. 
	Since $h$ is strongly convex on bounded subsets of $[0,\infty)^n$, the classical relative entropy $h_\rel^s$ satisfies $h_\rel^s(u|U)\gtrsim_E|u-U|^2$ if $|u|_1\le E$. 
	For the lower bound $|u-U|^2\gtrsim \sum_{i=1}^n|\sqrt{u_i}-\sqrt{U_i}|^2$ we distinguish two cases
	\begin{itemize}
		\item if $u\in\{v\in[\iota/2,\infty)^n\mid |v|_1\le E\}$, it follows from the uniform Lipschitz continuity of $s\mapsto\sqrt{s}$ on $[\iota/2,\infty)$
		\item if $u_{i_0}\in[0,\iota/2)$ for some $i_0$, it follows from the lower bound $U_i\ge\iota$ for all $i$
	\end{itemize} 
	
	\smallskip
	
	\noindent Re (ii): since $0\le\XIH\le1$, it follows for $(x,\wh\bfu)\in S_p$, if $E\gg1$ is chosen large enough, that
	$$h_\rel(x;\wh\bfu|U)\ge h(u)- C_1|u|_1-C_2\ge c_n|u|_1\log(|u|_1)\gg1,$$
	 where the last two inequalities follow from the superlinearity at infinity of $s\mapsto s\log s$.
\end{proof}

Given $\bfu=(u^+,u^-), u^\sm=u^\sm(x)\in [0,\infty)^{n}$ $\mathcal{L}^{d}$-measurable with $h(u^\sm)\in L^1(\Om_\sm),\sm\in\{\pm\}$, and $U=U(x)\in[\iota,\ol U]^n$, define $u(t,\cdot)$ via~\eqref{eq:def.ui} and $\tilde u^\beta(t,\cdot)$ via~\eqref{eq:def.ext.c}.
We then introduce the relative entropy functional via
\begin{align*}
	H_\rel(\wh\bfu|U):=\int_\Om h_\rel(x;\wh\bfu(x)|U(x))\,\dd x, \quad\text{where }\wh\bfu=(u,(\tilde u^\beta)_{\beta\in\frakP}), \;\;\tilde u^\beta=u\circ\Phi_\beta,
\end{align*}
where $\Phi_\beta$ is given by~\eqref{eq:def.Phi.beta}.
Later on, $\bfu$ will be chosen a renormalised solution $\bfu=\bfu(t)$ and $U$ corresponds to a strong solution $\bfU$.

We further define, up to $\mathcal{L}^d$-negligible sets, three measurable sets $S_a^{\wh\bfu}\subseteq\Om$, $a\in\{g,p,b\}$, satisfying $S_g^{\wh\bfu}\dot\cup S_p^{\wh\bfu}\dot\cup S_b^{\wh\bfu}=\Om$ via
\begin{align*}
&S_g^{\wh\bfu}:=\{x\in\Om\mid \XIH(x,\wh\bfu(x))=1\},
\\&S_p^{\wh\bfu}:=\{x\in\Om\setminus S_g^{\wh\bfu}\mid |u(x)|_1\ge E/2\},
\\&S_b^{\wh\bfu}:=\{x\in\Om\setminus S_g^{\wh\bfu}\mid |u(x)|_1<E/2\}.
\end{align*}
Loosely speaking, this means that $x\in S_a^{\wh\bfu}$ whenever ``$(x,\wh\bfu(x))\in S_a$" (here, for $x\not\in V_\beta$, $\tilde u^\beta(x)$ can be set to an arbitrary value, which does not change the definition since then $\vp_\beta(x)=0$).

The following proposition is the main result of this subsection. In particular, estimate~\eqref{eq:Hrel.coerc.Sg} below shows the distance-like properties of $H_\rel(\wh\bfu|U)$ in the sense that $H_\rel(\wh\bfu|U)\ge0$ with equality if and only if $u_i=U_i$ $\mathcal{L}^d$-a.e.\ in $\Om$ for all $i=1,\dots,n$. We emphasise that the pointwise analogue of this estimate fails, since $h_\rel(;\wh\bfu|U)$ may take negative values in $S_b^{\wh\bfu}$.
\begin{proposition}[Coercivity of relative entropy functional]\label{prop:coerc.relen-functional}
	If $E\gg_{\ol U,\iota}1$ is large enough, then
\begin{align}\label{eq:coerc.Sb}
			H_\rel(\wh\bfu|U)\gtrsim  E\log\!E\cdot \mathcal{L}^d(S_b^{\wh\bfu}).
			\end{align} 
			As a consequence,
	\begin{align}\label{eq:Hrel.coerc.Sg}
		&H_\rel(\wh\bfu|U)\gtrsim 
		\int_\Om	\sum_{i=1}^n|\sqrt{u_i}-\sqrt{U_i}|^2\,\dd x,
		\\&\label{eq:Hrel.coerc.complSg}H_\rel(\wh\bfu|U)\gtrsim E\log\!E\cdot \mathcal{L}^{d}(\Om\setminus S_g^{\wh\bfu}),
	\end{align}
	where all constants hidden in $\gtrsim$ are independent of $E,N$.
\end{proposition}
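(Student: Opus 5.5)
The plan is to split $H_\rel(\wh\bfu|U)=\int_{S_g^{\wh\bfu}}+\int_{S_p^{\wh\bfu}}+\int_{S_b^{\wh\bfu}}$. By Lemma~\ref{l:coerc.relen}, the integrand is nonnegative on $S_g^{\wh\bfu}$ and is at least $\tfrac12\sum_iu_i\log u_i\gtrsim E\log E$ on $S_p^{\wh\bfu}$. On $S_b^{\wh\bfu}$ it is bounded below by a constant: since $|u|_1<E/2$ and $0\le\XIH\le1$,
\[
h_\rel\ge h(u)-C|u|_1-C\ge -C(1+E)\quad\text{at worst},
\]
and in fact $h_\rel\ge -C_0$ independently of $E$ after using $h(u)-\log U_j\XIH u_j\ge h(u)-(\log\ol U)_+|u|_1$ and minimising $s\log s-s-cs$ over $s\ge0$. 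So the negative contribution is at most $C_0\mathcal{L}^d(S_b^{\wh\bfu})$.

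The key point is a non-local mass argument. If $x\in S_b^{\wh\bfu}$, then $\XIH(x,\wh\bfu(x))<1$. Since $|u|_1<E/2$ gives $\zeta^*(u)=1$, by \eqref{eq:xi.less.1} there is some $\beta$ with $\vp_\beta(x)>0$ and $(u,\tilde u^\beta)(x)\notin\mathcal{A}$, that is $|\tilde u^\beta(x)|_1>E/2$. Hence
\[
S_b^{\wh\bfu}\subseteq\bigcup_{\beta\in\frakP}\bigl\{x\in V_\beta\cap\Om:|u(\Phi_\beta(x))|_1>E/2\bigr\}=\bigcup_{\beta}\Phi_\beta\bigl(\{y\in V_\beta\cap\Om:|u(y)|_1>E/2\}\bigr).
\]
Because $\Phi_\beta$ is measure preserving (\ref{it:Phi.measure-pres}) and an involution, this gives
\[
\mathcal{L}^d(S_b^{\wh\bfu})\le \PP\,\mathcal{L}^d(\{|u|_1>E/2\}).
\]

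Next, split $\{|u|_1>E/2\}\subseteq S_p^{\wh\bfu}\cup(S_g^{\wh\bfu}\cap\{|u|_1>E/2\})$. On the second set, part (i) of Lemma~\ref{l:coerc.relen} gives $h_\rel=h^s_\rel(u|U)\ge c\,|u|_1\log|u|_1\gtrsim E\log E$ for $E\gg_{\ol U,\iota}1$, by the superlinearity of $h$, since $U$ is bounded. Thus
\[
\int_{S_g\cup S_p}h_\rel\ge c\,E\log E\,\mathcal{L}^d(\{|u|_1>E/2\})\ge \frac{c}{\PP}\,E\log E\,\mathcal{L}^d(S_b^{\wh\bfu}).
\]
Combining this with the $S_b$ lower bound yields
\[
H_\rel\ge\Bigl(\frac{c}{\PP}E\log E-C_0\Bigr)\mathcal{L}^d(S_b^{\wh\bfu})\gtrsim E\log E\,\mathcal{L}^d(S_b^{\wh\bfu})
\]
for $E$ large, which is \eqref{eq:coerc.Sb}. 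The same bound also shows that half of the $S_g\cup S_p$ contribution alone controls $E\log E\,\mathcal{L}^d(S_p^{\wh\bfu})$ as well as $\int_{S_g}$. Together these give \eqref{eq:Hrel.coerc.complSg}, since $\Om\setminus S_g^{\wh\bfu}=S_p^{\wh\bfu}\cup S_b^{\wh\bfu}$.

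For \eqref{eq:Hrel.coerc.Sg}, use Lemma~\ref{l:coerc.relen}(i) on $S_g^{\wh\bfu}$. On $\Om\setminus S_g^{\wh\bfu}$, bound $|\sqrt{u_i}-\sqrt{U_i}|^2\le 2u_i+2\ol U$. On $S_b$ this is $\lesssim E$, which is dominated by $E\log E\,\mathcal{L}^d(S_b)$. On $S_p$ it is $\lesssim |u|_1\lesssim u\log u\le 2h_\rel$.

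The main obstacle is the non-local step. One must make sure that the correspondence $x\mapsto\Phi_\beta(x)$ really lands in $V_\beta\cap\Om$ with preserved measure, and that the $S_b$ lower bound $-C_0$ is uniform in $E$ and $N$. The latter relies on $\XIH\in[0,1]$ and $U\le\ol U$.
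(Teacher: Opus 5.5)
Your proposal is correct and follows essentially the same route as the paper. It uses the same ingredients: the inclusion of $S_b^{\wh\bfu}$ into the reflected large-mass sets $\Phi_\beta(\{|u|_1>E/2\}\cap V_\beta)$, measure preservation of $\Phi_\beta$, the pointwise bound $h_\rel\gtrsim E\log E$ wherever $|u|_1>E/2$, and control of the negative part of $h_\rel$, which can only occur in $S_b^{\wh\bfu}$.

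The differences are minor. On $S_b^{\wh\bfu}$ you use the sharper $E$-independent bound $h_\rel\ge -C_0$, where the paper uses $-h_\rel\lesssim |u|_1+1\lesssim E$, which is still beaten by $E\log E$. You also spell out how \eqref{eq:Hrel.coerc.Sg} and \eqref{eq:Hrel.coerc.complSg} follow, including the control of $\mathcal{L}^d(S_p^{\wh\bfu})$ via half of the $S_g\cup S_p$ contribution, which the paper leaves implicit.
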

\begin{proof}For the proof, we fix a representative of $u\in L^1(\Om;[0,\infty)^n)$, determined pointwise everywhere in $\Om$, which we again denote by $u$, and define the measurable sets
	\begin{align*}
	S_{b,\beta}^{\wh\bfu}=\{x\in V_\beta\cap\Om\mid |\tilde u^\beta(x)|_1>E/2\},\qquad \beta\in\frakP. 
	\end{align*}
	We first assert that 
	\begin{align}\label{eq:union.Sb.beta}
		S_b^{\wh\bfu}\subseteq	\bigcup_{\beta\in\frakP} S_{b,\beta}^{\wh\bfu}.
	\end{align}
	To show~\eqref{eq:union.Sb.beta}, let $x\in S_b^{\wh\bfu}$. Then $\XIH(x,\wh\bfu(x))<1$ and $|u(x)|_1<E/2$. The second inequality implies that $\zeta^*(u(x))=1$, so that from the	
	first inequality we can infer the existence of $\beta'\in\frakP$ with $|\wt\bfu^{\beta'}(x)|_1>E$. We therefore have $|\tilde u^{\beta'}(x)|_1=|\wt\bfu^{\beta'}(x)|_1-|u(x)|_1>E-E/2=E/2$, 
	and hence $x\in S_{b,\beta'}^{\wh\bfu}$.
	This proves~\eqref{eq:union.Sb.beta}.
	
	From~\eqref{eq:union.Sb.beta} and the fact that $\Phi_\beta$ is measure preserving, it follows that 
		\begin{align}\label{eq:sum.b.beta}
		\mathcal{L}^d(S_b^{\wh\bfu})\le \sum_{\beta\in\frakP}\mathcal{L}^d(S_{b,\beta}^{\wh\bfu})=\sum_{\beta\in\frakP}\mathcal{L}^d(\Phi_\beta(S_{b,\beta}^{\wh\bfu})).
	\end{align}
	Next, from the definition of $\tilde u^\beta$ and $S_{b,\beta}^{\wh\bfu}$ we deduce  the lower bound
	\[|u(x)|_1=|\tilde u^\beta(\Phi_\beta^{-1}(x))|_1>E/2\qquad\text{ for all }x\in \Phi_\beta(S_{b,\beta}^{\wh\bfu})).\]
	Therefore,  if $E\gg1$ is large enough, arguing as in the proof of  Lemma~\ref{l:coerc.relen}~(ii), there exists $c_n>0$ such that $h_\rel(x;\wh\bfu(x)|U(x))\ge c_n E\log E$ for all $x\in  \Phi_\beta(S_{b,\beta}^{\wh\bfu})$.
	Consequently,
	\begin{align}\begin{aligned}
		\mathcal{L}^d(\Phi_\beta(S_{b,\beta}^{\wh\bfu}))
		&\lesssim \frac{1}{E\log E}\int_{\Phi_\beta(S_{b,\beta}^{\wh\bfu})} 
		h_\rel(x;\wh\bfu(x)|U(x))\,\dd x
		\\&= \frac{1}{E\log E}\Big(H_\rel(\wh\bfu|U)-\int_{\Om\setminus\Phi_\beta(S_{b,\beta}^{\wh\bfu})}	h_\rel(x;\wh\bfu(x)|U(x))\,\dd x\Big).
		\end{aligned}
	\end{align}
	
The pointwise bounds from Lemma~\ref{l:coerc.relen} and the fact that $h\ge0$ and $0\le\chi\le1$ imply
\begin{alignat*}{3}
	 &-h_\rel(\cdot;\wh\bfu|U)\le 0\qquad&&\text{ in }S_g^{\wh\bfu}\cup S_p^{\wh\bfu}=(S_b^{\wh\bfu})^c
	\\&-h_\rel(\cdot;\wh\bfu|U)\lesssim_{\ol U} |u|_1+1\quad &&\text{ in }\Om, 
\end{alignat*}
allowing us to estimate 
\begin{align*}
-\int_{\Om\setminus\Phi_\beta(S_{b,\beta}^{\wh\bfu})}h_\rel(x;\wh\bfu(x)|U(x))\,\dd x
&\le -\int_{\big(\Om\setminus\Phi_\beta(S_{b,\beta}^{\wh\bfu})\big)\setminus\big(S_g^{\wh\bfu}\cup S_p^{\wh\bfu}\big)}h_\rel(x;\wh\bfu(x)|U(x))\,\dd x
\\&\lesssim \int_{\big(\Om\setminus\Phi_\beta(S_{b,\beta}^{\wh\bfu})\big)\setminus\big(S_g^{\wh\bfu}\cup S_p^{\wh\bfu}\big)}(|u|_1+1)\,\dd x
\lesssim E\mathcal{L}^d(S_{b}^{\wh\bfu}),
\end{align*}
where the last inequality follows from the inclusion $\big(\Om\setminus\Phi_\beta(S_{b,\beta}^{\wh\bfu})\big)\setminus\big(S_g^{\wh\bfu}\cup S_p^{\wh\bfu}\big)\subset\Om\setminus\big(S_g^{\wh\bfu}\cup S_p^{\wh\bfu}\big)=S_{b}^{\wh\bfu}$ and the bound
$|u|_1\lesssim E$ in $S_b^{\wh\bfu}$.

In combination, we infer
	\begin{align*}
E(\log E-C_0)\mathcal{L}^d(S_{b}^{\wh\bfu})\le C_1H_\rel(\wh\bfu|U),
\end{align*}
for suitable constants $C_i=C_i(\PP)<\infty $, $i=0,1$.
Thus, we deduce~\eqref{eq:coerc.Sb} if additionally $E\ge \ee^{C_0+1}$.

The remaining estimates~\eqref{eq:Hrel.coerc.Sg},~\eqref{eq:Hrel.coerc.complSg} follow upon combining the bound~\eqref{eq:coerc.Sb} with Lemma~\ref{l:coerc.relen}.
\end{proof}

We introduce further auxiliary `relative'-type quantities
\begin{align}
	d_\rel(u|U)&:=\sum_{i=1}^n|\nabla \sqrt{u_i}-\frac{\sqrt{u_i}}{\sqrt{U_i}}\nabla\sqrt{U_i}|^2, \nonumber
	\\e_\rel(v|V)&:=\sum_{i=1}^n|\sqrt{v_i}-\sqrt{V_i}|^2,\label{eq:def.erel}
\end{align}
where here $u,U:\Om\to[0,\infty)^n$ denote measurable functions, $U_{|\Om_\sm}\in C^{0,1}(\Om_\sm)$ and $\inf U_i>0$, and $u_i$ are such that 
$\sqrt{u_i}$ possesses a weak derivative $\nabla\sqrt{u_i}_{|\Om_\sm}\in L^2(\Om_\sm)$ for every $i=1,\dots,n$. The expression $e_\rel(v|V)$ will be reserved for interfacial quantities $v,V:\Gamma\to[0,\infty)^n$. It is important, throughout this manuscript, to keep in mind that the weak spatial differentiability of $\sqrt{u_i}$ is only true after restriction to the subdomains $\Om_\pm$ and that the respective traces $\tr_{\Gamma_\pm}\sqrt{u_i}$ of $\sqrt{u_i}$ from $\Om_\pm$ to $\Gamma$ typically do not coincide.

The following interpolation estimate is an adaptation of~\cite[Lemma~10]{Fischer_2017}.
\begin{lemma}\label{l:interpol-trace.g}
Let $u,U:\Om\to[0,\infty)^n$ be as above. Then, 
	for every $\ve>0$ there exists $C_\ve<\infty$ such that 
	\begin{align*}
		\int_{\Gamma}e_\rel(u^\sigma|U^\sigma)\,\dd\mathcal{H}^{d-1}&\le 
		\ve \int_{\Om_\sm}d_\rel(u|U)\,\dd x+C_\ve H_\rel(\wh\bfu|U),\qquad\sm\in\{\pm\},
	\end{align*}
	where the integrand on the left-hand side is to be understood in the trace sense.
\end{lemma}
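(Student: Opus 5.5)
We would follow Fischer's argument, transplanted to one compartment $\Om_\sm$, and apply a trace inequality to the function $w_i:=\sqrt{u_i}-\sqrt{U_i}$ restricted to $\Om_\sm$. Since $\Om_\sm$ is a bounded Lipschitz domain, the standard $W^{1,1}$ trace inequality $\|\tr_\Gamma g\|_{L^1(\Gamma)}\lesssim\|g\|_{L^1(\Om_\sm)}+\|\nabla g\|_{L^1(\Om_\sm)}$ holds. We apply it to $g=w_i^2$ and use $\nabla(w_i^2)=2w_i\nabla w_i$. This gives
\[
\int_\Gamma e_\rel(u^\sm|U^\sm)\,\dd\mathcal{H}^{d-1}\lesssim \int_{\Om_\sm}\sum_i w_i^2\,\dd x+\int_{\Om_\sm}\sum_i|w_i|\,|\nabla w_i|\,\dd x .
\]
By~\eqref{eq:Hrel.coerc.Sg} in Proposition~\ref{prop:coerc.relen-functional}, the first term is bounded by $C H_\rel(\wh\bfu|U)$. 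The whole difficulty is therefore the mixed term, which has to be controlled by $\ve\, d_\rel+C_\ve H_\rel$.

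To handle the mixed term, we rewrite the gradient in relative form:
\[
\nabla w_i=\Big(\nabla\sqrt{u_i}-\tfrac{\sqrt{u_i}}{\sqrt{U_i}}\nabla\sqrt{U_i}\Big)+\frac{\sqrt{u_i}-\sqrt{U_i}}{\sqrt{U_i}}\nabla\sqrt{U_i}.
\]
Since $U_i\ge\iota$ and $\nabla\sqrt{U_i}\in L^\infty$ (with bound depending on $\iota,\mathfrak I$), we get the pointwise bound $|w_i||\nabla w_i|\le |w_i|\,d_\rel(u|U)^{1/2}+C w_i^2$. Young's inequality then yields
\[
|w_i||\nabla w_i|\le \ve\, d_\rel(u|U)+(C_\ve+C)\,w_i^2 .
\]
Integrating over $\Om_\sm$ and using~\eqref{eq:Hrel.coerc.Sg} once more, we obtain the claim with $\ve$ replaced by a constant multiple of $\ve$, which can be rescaled.

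The step that needs care is justifying the trace inequality for $w_i^2$, since $\sqrt{u_i}$ is only in $H^1(\Om_\sm)$ and $u_i\in L\log L$. We plan the following checks.
\begin{enumerate}[label=(\alph*)]
\item $w_i\in H^1(\Om_\sm)$: this holds because $\sqrt{U_i}$ is Lipschitz on $\ol\Om_\sm$.
\item $w_i^2\in W^{1,1}(\Om_\sm)$ with $\nabla(w_i^2)=2w_i\nabla w_i$: this follows from the product rule for $H^1$ functions.
\item The trace of $w_i^2$ equals $(\tr w_i)^2=|\tr\sqrt{u_i^\sm}-\sqrt{U_i^\sm}|^2$, which is exactly the integrand of $e_\rel$ in the trace sense: this is seen by approximating $w_i$ in $H^1$ by smooth functions on the Lipschitz domain.
\end{enumerate}

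Two further points need to be verified. First, the trace constant depends only on $\Om_\sm$, so all constants depend only on $\iota$, $\mathfrak I$ and the geometry. Second, the right-hand side uses the full-domain quantity $H_\rel(\wh\bfu|U)$, which dominates the integral over $\Om_\sm$ because~\eqref{eq:Hrel.coerc.Sg} controls the nonnegative integrand on all of $\Om$.
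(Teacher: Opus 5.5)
Your proposal is correct and follows essentially the same route as the paper. The paper invokes the interpolation--trace inequality $\int_\Gamma|g|^2\,\dd\mathcal{H}^{d-1}\le\frac{\ve}{2}\int_{\Om_\sm}|\nabla g|^2\,\dd x+C_\ve\int_{\Om_\sm}|g|^2\,\dd x$ for $g=\sqrt{u_i}-\sqrt{U_i}$, then splits $\nabla g$ with the same relative-gradient decomposition and concludes with \eqref{eq:Hrel.coerc.Sg}; you instead derive that interpolation--trace inequality yourself from the $W^{1,1}$ trace bound applied to $g^2$ together with Young's inequality.
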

\begin{proof}
	Let $\ve>0$.	By the interpolation-trace theorem, there exists $C_1(\ve)<\infty$ such that 
	\begin{align*}
		\int_{\Gamma}e_\rel(u^\sigma|U^\sigma)\,\dd\mathcal{H}^{d-1}&\le
		\frac{\ve}{2}\int_{\Om_\sm}\sum_{i=1}^n|\nabla\sqrt{u_i}-\nabla\sqrt{U_i}|^2\,\dd x+C_{1}(\ve)\int_{\Om_\sm}\sum_{i=1}^n|\sqrt{u_i}-\sqrt{U_i}|^2\,\dd x
		\\&\le \ve\int_{\Om_\sm}d_\rel(u|U)\,\dd x
		+C_\ve H_\rel(\wh\bfu|U),
	\end{align*}
	where, in the second step, we used the triangle inequality to estimate
	\begin{align*}
		|\nabla\sqrt{u_i}-\nabla\sqrt{U_i}|^2\le 2|\nabla\sqrt{u_i}-\frac{\sqrt{u_i}}{\sqrt{U_i}}\nabla\sqrt{U_i}|^2
		+2\frac{|\nabla\sqrt{U_i}|^2}{U_i}|\sqrt{u_i}-\sqrt{U_i}|^2,
	\end{align*}
	as well as inequality~\eqref{eq:Hrel.coerc.Sg} from Lemma~\ref{l:coerc.relen}.
\end{proof}

\subsection{Stability estimate}

We abbreviate $\xi_j^*(u,\tilde u):=\xi^*(u,\tilde u)u_j$, $\zeta_j^*(u):=\zeta^*(u)u_j$, $j=1,\dots,n$ with $\xi^*,\zeta^*$ as specified in Section~\ref{ssec:adjrelen.coer}. 

\begin{lemma}[Evolution of relative entropy]\label{l:evol.rel-en.gen}
	Under the assumptions of Theorem~\ref{thm:unique}, for a.e.\ $0<T<T^*$ it holds that
	\begin{align}\label{eq:evol.relen}
		\begin{aligned}
			\eval{H_\rel(\wh\bfu|U)}_{t=0}^{t=T}	
			&\le\int_0^T\bigg(\int_\Om\rho_\text{\rm bulk}\,\dd x
			+\sum_{\sigma\in\{\pm\}}\int_{\Gamma_{\sigma}}\rho_{\text{\rm int},\sigma}\,\dd \mathcal{H}^{d-1}\bigg)\dd t
			\\&\qquad+
			\int_0^T\sum_{\sigma\in\{\pm\}}\bigg(\int_{\Om_{-\sigma}}\rho_{\text{\rm bulk,rem},\sigma}\,\dd x+\int_{\Gamma_{-\sigma}}\rho_{\text{\rm int,rem},\sigma}\,\dd\mathcal{H}^{d-1}\bigg)\dd t,
		\end{aligned}
	\end{align}
	where $	\eval{H_\rel(\wh\bfu|U)}_{t=0}^{t=T}:=H_\rel(\wh\bfu(T)|U(T))-H_\rel(\wh\bfu_0|U(0))$. Furthermore,
	\begin{align*}
		\rho_\text{\rm bulk}&=
		\sum_{i=1}^n\big(-4\DC_i\nabla \sqrt{u_i}\cdot\nabla\sqrt{u_i}+D_ih(u)f_i(u)+f_i(U)\big)
		\\&+\sum_{i,j=1}^n\bigg(
		\nabla( \vp_{\text{\rm out}} D_jh(U) D_i\zeta_j^*(u))\cdot A_i\nabla u_i
		+\nabla(\vp_{\rm out}D_{ij}h(U)\zeta_j^*(u))\cdot A_i\nabla U_i
		\\&\qquad\qquad-\vp_{\text{\rm out}}\big(D_jh(U) D_i\zeta_j^*(u)f_i(u)
		+D_{ij}h(U) \zeta_j^*(u)f_i(U)\big)
		\\&\qquad+\sum_{\beta\in \frakP}\Big(\nabla(\vp_\beta D_jh(U) D_i\xi_j^*(\wt\bfu^\beta))\cdot A_i\nabla u_i
		+\nabla(\vp_\beta D_{ij}h(U)\xi_j^*(\wt\bfu^\beta))\cdot A_i\nabla U_i
		\\&\qquad\qquad\qquad\qquad -\vp_\beta\big(D_jh(U) D_i\xi_j^*(\wt\bfu^\beta)f_i(u)+D_{ij}h(U) \xi_j^*(\wt\bfu^\beta)f_i(U)\big)\Big)\bigg),
		\\[2mm]\rho_{\text{\rm int},\sigma}&= -\sum_{i=1}^nD_ih(u) r_i^\sigma(\bfu)
		\\&\qquad+\sum_{i,j=1}^n\sum_{\beta\in\frakP}\vp_\beta\Big( D_jh(U)D_i\xi_j^*(\wt\bfu^\beta)r_i^\sigma(\bfu)
		+ D_{ij}h(U) \xi_j^*(\wt\bfu^\beta)r_i^\sm(\bfU)\Big)\qquad \text{(\,$\Gamma$-trace   from $\Om_\sm$)},
	\end{align*}
	and 
	\begin{align*}
		\rho_{\text{\rm bulk,rem},\sigma}&=	
		\sum_{i,j=1}^n\sum_{\beta\in\frakP}\bigg(
		\nabla \big((\vp_\beta D_jh(U)D_{i+n}\xi_j^*(\wt\bfu^\beta))\circ\Phi_{\beta,-\sigma}\big)
		\cdot A_i\nabla u_i
		\\&\hspace{7em}\qquad
		-(\vp_\beta D_jh(U)D_{i+n}\xi_j^*(\wt\bfu^\beta))\circ \Phi_{\beta,-\sigma}f_i(u)\bigg),
		\\	\rho_{\text{\rm int,rem},\sigma}&=\sum_{i,j=1}^n\sum_{\beta\in\frakP}
		(\vp_\beta D_jh(U)
		D_{i+n}\xi_j^*(\wt\bfu^\beta))\circ\Phi_{\beta,-\sigma}
		r_i^{-\sigma}(\bfu)\qquad \text{(\,$\Gamma$-trace from $\Om_{-\sm}$)},
	\end{align*}
\end{lemma}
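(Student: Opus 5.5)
The plan is to split the relative entropy into three pieces and handle each by a different source of information. Write
\[
H_\rel(\wh\bfu|U)=H(u)-\sum_{j}\int_\Om \log U_j\,\XIH(x,\wh\bfu)u_j\,\dd x+\sum_j\int_\Om U_j\,\dd x .
\]
Then expand $\XIH u_j=\vp_{\rm out}\zeta_j^*(u)+\sum_{\beta}\vp_\beta\xi_j^*(\wt\bfu^\beta)$.

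\textbf{First term.} For $H(u)$ I use the entropy dissipation inequality~\eqref{eq:edi}. It produces the terms $-4A_i\nabla\sqrt{u_i}\cdot\nabla\sqrt{u_i}+D_ih(u)f_i(u)$ in $\rho_{\rm bulk}$ and $-\sum_iD_ih(u)r_i^\sm(\bfu)$ in $\rho_{{\rm int},\sm}$. This is the only step where an inequality appears; everything else will be an identity.

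\textbf{Third term.} For $\sum_j\int U_j$ I use~\eqref{eq:strongsol.weak} with $\psi\equiv1$. The boundary contributions $\int_\Gamma r_j^\sm(\bfU)$ cancel after summing over $\sm$ by the mass conservation~\eqref{eq:mass.pres}. What remains is $\int f_i(U)$.

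\textbf{Middle term, truncation functions.} This term has the form $\int\psi\,\Xi(\wt\bfu)$ with time-dependent weight $\psi=D_jh(U)\vp$ and $\Xi=\zeta_j^*$ or $\xi_j^*$. These are admissible truncations: $\zeta_j^*$ has compactly supported derivative because $\zeta^*$ is compactly supported, so $\zeta_j^*=\zeta^*u_j$ is even compactly supported, and the same holds for $\xi_j^*$.

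\textbf{Middle term, test functions.} The weight $\psi$ is only Lipschitz in $(t,x)$, not $C^\infty_c$. Moreover $\vp_{\rm out}$ is supported in $\ol\Om\setminus\Gamma$, matching~\eqref{eq:def.renorm.out.gen}, while $\vp_\beta$ is supported in $V_\beta$, matching~\eqref{eq:def.renorm.c.gen}. I therefore apply the two renormalised formulations with $\psi$ replaced by a mollification $\psi_\delta$ and pass to the limit $\delta\to0$. This is legitimate because every integrand is integrable:
\begin{itemize}
\item $D\Xi$ is compactly supported;
\item $\nabla u_i=2\sqrt{u_i}\nabla\sqrt{u_i}$ is in $L^2$ on $\{|u|_1\le E^N\}$;
\item traces of bounded compositions are controlled on the support.
\end{itemize}
Formula~\eqref{eq:def.renorm.c.gen} is applied in each of $\Om_\pm\cap V_\beta$. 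Its last three lines give exactly the remainder terms $\rho_{\rm bulk,rem,\sm}$ and $\rho_{\rm int,rem,\sm}$ (with a minus sign in front from $-\log U_j=-D_jh(U)$), living on $\Om_{-\sm}$.

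\textbf{Middle term, time derivative of the weight.} The terms $-\int\pa_t\psi\,\Xi$ with $\pa_t(D_jh(U))=D_{jj}h(U)\pa_tU_j$ must be rewritten using the equation for $U$. I test~\eqref{eq:strongsol.weak} for $U_i$ with $D_{ij}h(U)\vp\,\Xi_j(\wt\bfu)$. This function lies in $L^1W^{1,1}$ by the regularity~\eqref{eq:reg.g} together with the truncation. The result is the terms
\[
\nabla(\vp D_{ij}h(U)\Xi_j)\cdot A_i\nabla U_i,\qquad -\vp D_{ij}h(U)\Xi_jf_i(U),
\]
and the interface term $D_{ij}h(U)\xi_j^*r_i^\sm(\bfU)$. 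The $\vp_{\rm out}$ part has no interface term since it vanishes near $\Gamma$. After the sign from $-\log U_j$, these match $\rho_{\rm bulk}$ and $\rho_{\rm int,\sm}$.

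\textbf{Main obstacle.} The hardest step is justifying this product rule: combining a renormalised solution, which has no time derivative, with a merely Lipschitz $U$. I would do it as in Fischer~2017: a time-mollification / Steklov-average argument on $\psi$, or equivalently a discrete-time difference-quotient argument. This uses that~\eqref{eq:def.renorm.c.gen} holds for a.e.\ $T$ and that $t\mapsto\Xi(\wt\bfu(t))$ is weakly continuous in $L^1$ where needed. The cross term $\int(\Xi(t+h)-\Xi(t))(\psi(t+h)-\psi(t))/h$ must be shown to vanish in the limit; since both $\Xi$ and $\partial_t\psi$ are bounded, this follows by dominated convergence.

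Finally I sum all contributions over $\beta\in\frakP$ and $\sm$. The partition-of-unity identity $\vp_{\rm out}+\sum\vp_\beta=1$ is not needed for the identity itself, only for the later estimates.
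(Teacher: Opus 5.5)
Your proposal is correct and follows essentially the same route as the paper: the entropy inequality for $H(u)$, the two renormalised formulations tested with the Lipschitz weights $-D_jh(U)\vp_{\rm out}$ and $-D_jh(U)\vp_\beta$ (extended to Lipschitz test functions by approximation), the weak formulation of $U$ with test function $\mp\vp D_{ij}h(U)\Xi_j$ to rewrite the $\pa_t U$ term, and mass conservation for $\int U$. Your ``main obstacle'' is not actually an obstacle: the renormalised formulations already put $\pa_t\psi$ on the test function, so your mollification step (which the paper also invokes) already handles it, and no additional Steklov-average product rule is needed.
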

\begin{proof}
	Recalling our short-hand $\zeta_j^*(u):=\zeta^*(u)u_j$ and $\xi_j^*(\wt\bfu^\beta):=\xi^*(\wt\bfu^\beta)u_j$,
	we write 
	\begin{align}\label{eq:relen.eval}
		\eval{H_\rel(\wh\bfu|U)}_{t=0}^{t=T}	 &=
		\eval{H(u)}_{t=0}^{t=T}
		-\eval{\int_{\Om}\,\sum_{j=1}^nD_jh(U)\big(\vp_{\text{\rm out}}\zeta^*_j(u)+
		\sum_{\beta\in\frakP}\vp_\beta\xi^*_j(\wt\bfu^\beta)\big)\dd x}_{t=0}^{t=T}
		+\eval{\int_\Om \sum_{j=1}^nU_j\,\dd x}_{t=0}^{t=T}.
	\end{align}
	The first term on the right-hand side is handled using the entropy dissipation inequality~\eqref{eq:edi}. 
	
	For the second term, we use the renormalised solution property in Definition~\ref{def:renormalised.gen}. To this end, we first observe that by an approximation argument, equations~\eqref{eq:def.renorm.out.gen} and~\eqref{eq:def.renorm.c.gen} remain valid for $\psi\in  C^{0,1}_c([0,\infty)\times G)$ with $G=\ol\Om_\sm\setminus\Gamma$, $\ol\Om_\sm\cap V_\beta,\beta\in\frakP,$ respectively (see e.g.~\cite[Remark~2.1]{Hopf_2022}).
	We  then treat each summand separately:
	for the term involving $\vp_{\text{\rm out}}$, we invoke~\eqref{eq:def.renorm.out.gen} with $\psi=-D_jh(U)\vp_{\text{\rm out}}$ and $\zeta=\zeta_j^*$, 
	while for the term involving $\vp_\beta$, $\beta\in\frakP$,
	  we use~\eqref{eq:def.renorm.c.gen} with $\psi=-D_jh(U)\vp_{\beta}$ and the choice $\xi=\xi_j^*$. This yields the following equations, where compactly supported integrands defined on subdomains are understood to be extended by zero to the whole of $\Om_\sigma:$

 For the outer region away from $\Gamma$, we have
	\begin{align}\label{eq:hrel.evol.renorm.out.v1}
		\begin{aligned}
			\eval{-\int_{\Om_\sigma} \sum_{j=1}^n\vp_{\text{\rm out}}D_jh(U)\zeta^*(u)u_j\,\dd x}_{t=0}^{t=T}
			\,=&-\int_0^T\!\!\int_{\Om_\sigma}\sum_{i,j=1}^n\vp_{\text{\rm out}}D_{ij}h(U)\pa_tU_i \zeta_j^*(u)\,\dd x\dd t
			\\&
			+\int_0^T\!\!\int_{\Om_\sigma}\sum_{i,j=1}^n\nabla( \vp_{\text{\rm out}} D_jh(U) D_i\zeta_j^*(u))\cdot A_i\nabla u_i
			\,\dd x\dd t
			\\&-\int_0^T\!\!\int_{\Om_\sigma}\sum_{i,j=1}^n\vp_{\text{\rm out}}D_jh(U) D_i\zeta_j^*(u)f_i(u)\,\dd x\dd t.
		\end{aligned}
	\end{align}	
	For the family $\{V_\beta\mid \beta\in\frakP\}$ covering $\Gamma$ we obtain
	\begin{align*}
		\eval{-\int_{\Om_\sigma}\sum_{j=1}^n \vp_\beta D_jh(U)\xi_j^*(\wt\bfu^\beta )\,\dd x}_{t=0}^{t=T}
		=&-\int_0^T\!\!\int_{\Om_\sigma}\sum_{i,j=1}^n\vp_\beta D_{ij}h(U)\pa_tU_i \xi_j^*(\wt\bfu^\beta)\,\dd x\dd t
		\\&
		+\int_0^T\!\!\int_{\Om_\sigma}\sum_{i,j=1}^n\nabla(\vp_\beta D_jh(U) D_i\xi_j^*(\wt\bfu^\beta))\cdot A_i\nabla u_i
		\,\dd x\dd t
		\\&+\int_0^T\!\!\int_{\Gamma_\sigma}\sum_{i,j=1}^n\vp_\beta D_jh(U)
		D_i\xi_j^*(\wt\bfu^\beta)r_i^\sigma(\bfu)\,\dd\mathcal{H}^{d-1}\dd t
		\\& -\int_0^T\!\!\int_{\Om_\sigma}\sum_{i,j=1}^n\vp_\beta D_jh(U) D_i\xi_j^*(\wt\bfu^\beta)f_i(u)\,\dd x\dd t
		\\&+\int_0^T\!\!\int_{\Om_{-\sigma}}\sum_{i,j=1}^n
		\nabla \big((\vp_\beta D_jh(U)D_{i+n}\xi_j^*(\wt\bfu^\beta))\circ\Phi_{\beta,-\sigma}\big)
		\cdot A_i\nabla u_i\,\dd x\dd t
		\\&+\int_0^T\!\!\int_{\Gamma_{-\sigma}}\sum_{i,j=1}^n(\vp_\beta D_jh(U)
		D_{i+n}\xi_j^*(\wt\bfu^\beta))\circ\Phi_{\beta,-\sigma}
		r_i^{-\sigma}(\bfu)\,\dd\mathcal{H}^{d-1}\dd t
		\\&-\int_0^T\!\!\int_{\Om_{-\sigma}}\sum_{i,j=1}^n (\vp_\beta D_jh(U)D_{i+n}\xi_j^*(\wt\bfu^\beta))\circ \Phi_{\beta,-\sigma}f_i(u) \,\dd x\dd t.
	\end{align*}
	We rewrite the first term on the right-hand side in each of the last two equations, using the weak formulation~\eqref{eq:strongsol.weak} for $U$ with test function $\psi=-\vp_{\text{\rm out}}D_{ij}h(U)\zeta_j^*(u)$ resp.\ $\psi=-\vp_\beta D_{ij}h(U) \xi_j^*(\wt\bfu^\beta)$, which both satisfy the required regularity $\psi\in L^1(0,T;W^{1,1}(\Om_\sm))$, and then take the sum over $i,j=1,\dots,n$.
	For the outer region, we observe
	\begin{align*}
		-\int_0^T\!\!\int_{\Om_\sigma}\sum_{i,j=1}^n\vp_{\text{\rm out}}D_{ij}h(U)\pa_tU_i \zeta_j^*(u)\,\dd x\dd t
		&=\int_0^T\!\!\int_{\Om_\sigma}\sum_{i,j=1}^n\nabla(\vp_{\rm out}D_{ij}h(U)\zeta_j^*(u))\cdot A_i\nabla U_i\,\dd x\dd t
		\\&\qquad-\int_0^T\!\!\int_{\Om_\sigma}\sum_{i,j=1}^n\vp_{\rm out}D_{ij}h(U) \zeta_j^*(u)f_i(U)\,\dd x\dd t.
	\end{align*}
	For $V_\beta$, $\beta\in\frakP$, we note that
	\begin{align*}
		-\int_0^T\!\!\int_{\Om_\sigma}\sum_{i,j=1}^n\vp_\beta D_{ij}h(U)\pa_tU_i \xi_j^*(\wt\bfu^\beta)\,\dd x\dd t
		&=\int_0^T\!\!\int_{\Om_\sigma}\sum_{i,j=1}^n\nabla(\vp_\beta D_{ij}h(U)\xi_j^*(\wt\bfu^\beta))\cdot A_i\nabla U_i\,\dd x\dd t
		\\&\qquad+\int_0^T\!\!\int_{\Gamma_\sigma}\sum_{i,j=1}^n\vp_\beta D_{ij}h(U) \xi_j^*(\wt\bfu^\beta)r_i^\sigma(\bfU)\,\dd \mathcal{H}^{d-1}\dd t
		\\&\qquad-\int_0^T\!\!\int_{\Om_\sigma}\sum_{i,j=1}^n\vp_\beta D_{ij}h(U) \xi_j^*(\wt\bfu^\beta)f_i(U)\,\dd x\dd t.
	\end{align*}
	For the last term on the right-hand side of~\eqref{eq:relen.eval}, we simply have
	\begin{align}
		\eval{\int_\Om \sum_{j=1}^nU_j\,\dd x}_{t=0}^{t=T}=\int_\Om\sum_{j=1}^nf_j(U)\,\dd x,
	\end{align}
	where we used the fact that $\sum_{\sigma\in\{\pm\}}r^\sigma_j\equiv0$ for all $j=1,\dots,n$ as a consequence of property~\eqref{eq:mass.pres}.
	
	Combining the entropy dissipation property~\eqref{eq:edi} with the identities above and rearranging terms yields the asserted inequality.
\end{proof}

\begin{proposition}[Stability estimate]\label{prop:stab.gen}
Let $\bfU$ be a strong solution in the sense of Definition~\ref{def:strong} with maximal lifespan $T^*\in(0,\infty]$ and let $\ol T\in(0,T^*)$.
 If the parameters $E,N$ in the definition of $\xi^*,\zeta^*$ are chosen large enough (depending on $\bfU$ and $\ol T$ through $\iota,\mathfrak J$), then there exists a function $C\in C([0,\ol T];\mathbb{R}_+)$ with $C(0)=1$ such that 
  for almost all $T\in[0,\ol T]$ and any dissipative renormalised solution $\bfu$ in the sense of Definition~\ref{def:renormalised.gen} the following stability estimate holds true
	\begin{align}\label{eq:stab.est.gen}
		H_\rel(\wh\bfu(T)|U(T))\le C(T)H_\rel(\wh\bfu_0|U_0).
	\end{align}
	In particular, if $u_0^\sigma\equiv U_0^\sigma$ for all $\sigma\in\{\pm\}$, it holds that 
	$u^\sigma\equiv U^\sigma$ $\mathcal{L}^{d+1}$-a.e.\ in $(0,T^*)\times\Om_\sigma$ for  $\sigma\in \{\pm\}$.
\end{proposition}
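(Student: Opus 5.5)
The plan is to start from Lemma~\ref{l:evol.rel-en.gen} and bound every density on its right-hand side by
\[
-c\,d_\rel(u|U)+\big(\text{absorbable interface terms}\big)+C\,h_\rel\text{-type terms},
\]
so that Gronwall closes the argument. The bound should be of the form
\[
\eval{H_\rel}_{0}^{T}\le -c\int_0^T\!\!\int_\Om d_\rel(u|U)\,\dd x\dd t
+C\int_0^T H_\rel(\wh\bfu|U)\,\dd t
+\delta(E,N)\int_0^T\big(\mathcal{D}+\text{dissipation}\big)\,\dd t ,
\]
where $\delta\to0$ as $N\to\infty$ and $E\to\infty$. I would then conclude by Gronwall, with $C(T)=\ee^{CT}$, using the entropy dissipation inequality \eqref{eq:edi} to control the leftover dissipation integrated in time.

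For the bulk terms I would split $\Om$ pointwise into the sets $S_g^{\wh\bfu}$, $S_p^{\wh\bfu}$ and $S_b^{\wh\bfu}$. On $S_g^{\wh\bfu}$ all truncations equal one: $\chi=1$, $D\xi^*_j$ reduces to $e_j$, and second derivatives vanish. The density $\rho_{\rm bulk}$ then collapses to the classical relative-entropy computation of Fischer.
\begin{itemize}
\item The diffusion part gives $-4\sum_i A_i(\nabla\sqrt{u_i}-\sqrt{u_i/U_i}\,\nabla\sqrt{U_i})\cdot(\dots)\le -4\mathfrak a\, d_\rel(u|U)$ plus terms bounded by $C|\sqrt u-\sqrt U|^2$.
\item The reaction part, $\sum_i (D_ih(u)-D_ih(U))(f_i(u)-f_i(U))$ plus a linearisation, is handled by Lipschitz bounds on the bounded set $|u|_1\le E^N$ together with Lemma~\ref{l:coerc.relen}(i).
\end{itemize}
On $S_g^{\wh\bfu}$ I will use the dissipation $-f\cdot Dh\ge0$ and $\mathcal D_\sigma$. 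Off $S_g^{\wh\bfu}$ the truncated terms carry $D\xi^*$, $D^2\xi^*$ or $1-\xi^*$:
\begin{itemize}
\item Terms with $D\xi^*$ or $D^2\xi^*$ are estimated via \eqref{eq:decay.trunc}, e.g.\ $|D^2\xi_j^*|\lesssim 1/(N|\wt\bfu|_1)$. This yields contributions like $N^{-1}\sum_i|\nabla\sqrt{u_i}|^2+N^{-1}|\nabla\sqrt{\tilde u_i}|^2$, absorbed by the dissipation at cost $\delta=1/N$.
\item Terms with the smooth cut-off derivatives $\nabla\vp_\beta$ and $\nabla U$ are bounded by $C(|u|_1+1)$ on $\Om\setminus S_g^{\wh\bfu}$. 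They are then controlled through \eqref{eq:Hrel.coerc.complSg} and Lemma~\ref{l:coerc.relen}(ii): on $S_p^{\wh\bfu}$, $|u|_1\lesssim h_\rel/\log E$, and on $S_b^{\wh\bfu}$, $|u|_1\le E$ with $E\,\mathcal L^d(S_b)\lesssim H_\rel/\log E$.
\end{itemize}
The reaction terms $D_i\xi^*_j f_i(u)$ with large $u$ are the delicate ones. I would bound them by $|f_i(u)|\,\xi^*\lesssim$ a local Lipschitz constant on $\{|\wt\bfu|_1\le E^N\}$ times the support. Alternatively, following \cite{Fischer_2017,Hopf_2022}, I would compare with $-\sum f_iD_ih(u)$ using $|D\xi^*|\lesssim 1/(N|u|_1)$ and the identity $\sum_j D_jh(U)\,D_i\xi^*_j=\xi^*D_ih(U)+O(1/N)$. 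This should give $-(1-\xi^*)\sum f_i D_ih(u)$ plus harmless terms, a nonnegative multiple of the dissipation that is favourable.

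For the remainder bulk terms $\rho_{{\rm bulk,rem},\sigma}$, change variables with the measure-preserving involution $\Phi_\beta$. The derivatives $D_{i+n}\xi_j^*=\xi^*$-derivative$\cdot u_j$ are of size $O(1/N)$ and are supported off $\mathcal A$, so they are estimated as above, using the Lipschitz bound on $\Phi_\beta$ for $\nabla(\cdot\circ\Phi)$.

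For the interface terms, on the set where $\xi^*(\wt\bfu^\beta)=1$ the terms $\rho_{\rm int,\sigma}$ and $\rho_{\rm int,rem,\sigma}$ recombine, after summing over $\sigma$ and using $\sum_\beta\vp_\beta=1$ on $\Gamma$ and \eqref{eq:trace.tilde}, into
\[
-\sum_{\sigma,i}\big(D_ih(u^\sigma)-D_ih(U^\sigma)\big)\big(r_i^\sigma(\bfu)-r_i^\sigma(\bfU)\big)+\text{linearisation error}.
\]
Because traces are bounded by $E$ there, the local Lipschitz continuity of $r$ makes this $\lesssim_E e_\rel(u^\pm|U^\pm)$ up to the nonnegative $\mathcal D_{\rm int}$. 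Where $\xi^*<1$ on $\Gamma$, I would use $D\xi^*=O(1/N)$ and reduce to a multiple $(1-\xi^*+O(1/N))$ of $\mathcal D_{\rm int}$, which is signed favourably, plus bounded terms times $\mathcal H^{d-1}(\{|\wt\bfu|_1>E\})$. The latter is controlled by Chebyshev with $e_\rel$. The $e_\rel$ terms are then absorbed with Lemma~\ref{l:interpol-trace.g}: $\varepsilon\int d_\rel+C_\varepsilon H_\rel$.

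The main obstacle I expect is this interface bookkeeping: showing that the truncated combinations of $r_i^\sigma(\bfu)$ with $D_i\xi^*_j$, $D_{i+n}\xi^*_j$ and $D_jh(U)$ reproduce $-\mathcal D_{\rm int}$ up to errors of relative size $1/N$, without any growth bound on $r$. Here the simultaneous truncation in $(u,\tilde u)$ is essential. Choosing first $E$ large, for coercivity and Proposition~\ref{prop:coerc.relen-functional}, and then $N$ large makes all $\delta$-terms absorbable. Uniqueness follows from \eqref{eq:Hrel.coerc.Sg} when $H_\rel(\wh\bfu_0|U_0)=0$, for each $\ol T<T^*$.
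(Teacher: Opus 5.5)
\textbf{The closing step fails.} The architecture you commit to does not produce \eqref{eq:stab.est.gen}. You keep an unfavourable remainder $\delta(E,N)\int_0^T(\mathcal D+\dots)\,\dd t$ and control it through the entropy dissipation inequality \eqref{eq:edi}. That leaves an additive error of order $\delta(E,N)H(u_0)$, which is not proportional to $H_\rel(\wh\bfu_0|U_0)$ and does not vanish when $u_0=U_0$. So both the stability estimate and uniqueness are lost.

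Sending $E,N\to\infty$ does not repair this. First, $E,N$ are fixed in the statement, since they define the functional $H_\rel$. Second, the Gronwall rate $C(E,N)$ grows with $E,N$: it contains Lipschitz constants of $f_i,r_i^\sigma$ on $\{|\wt\bfu|_1\le E^N\}$ and the constant of Lemma~\ref{l:interpol-trace.g} with $\varepsilon\sim1/C_*(E,N)$. Hence $\ee^{C(E,N)T}\delta(E,N)$ need not tend to zero.

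Smallness in $1/N$ is genuinely needed only for the quadratic gradient terms $\frac CN\sum_i(|\nabla\sqrt{u_i}|^2+|\nabla\sqrt{\tilde u_i^\beta}|^2)$ produced via \eqref{eq:decay.trunc}. These must be absorbed, not bounded by \eqref{eq:edi}, into the term $-4\mathfrak a\sum_i|\nabla\sqrt{u_i}|^2$ already in $\rho_{\rm bulk}$. For the $\tilde u^\beta$-part this works only after integrating and changing variables with the measure-preserving $\Phi_\beta$. One then passes to $d_\rel(u|U)$ at cost $C(E,N)$, using $|\wt\bfu^\beta|_1\le E^N$ there. In the paper, \eqref{eq:edi} enters only through Lemma~\ref{l:evol.rel-en.gen}, to handle $H(u)$.

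\textbf{The obstacle you single out is neither achievable nor needed.} You aim to show that the truncated interface and reaction combinations reproduce $-\mathcal D_{\rm int}$ up to errors of relative size $1/N$. This fails already for \eqref{eq:transmission.CP}: $|r_i|=k_i|u_i^+-u_i^-|$ is not bounded by any multiple of $k_i(u_i^+-u_i^-)(\log u_i^+-\log u_i^-)$ near $u^+=u^-$. So an $O(1/N)|r(\bfu)|$ error cannot be absorbed into $\frac1N\mathcal D_{\rm int}$, and the same objection applies to your alternative treatment of the reaction terms.

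The paper closes the argument more crudely:
\begin{enumerate}
\item Wherever some $D\xi^*$, $D^2\xi^*$ or a factor $1-\xi^*$ is active, $|\wt\bfu^\beta|_1\le E^N$. After dropping the signed terms $\sum_iD_ih(u)f_i(u)\le0$ and $-\sum_{\sigma,i}D_ih(u^\sigma)r_i^\sigma(\bfu)\le0$, everything else is bounded by $C(E,N)$.
\item In the bulk, $C(E,N)\chi_{\Om\setminus S_g^{\wh\bfu}}$ is controlled by $C(E,N)H_\rel(\wh\bfu|U)$ through \eqref{eq:Hrel.coerc.complSg}.
\item On $\Gamma$, $|\tr\wt\bfu^\beta|_1$ is the same from both sides. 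For $E\gg\|\bfU\|_{L^\infty}$ one has $\sum_\sigma e_\rel(u^\sigma|U^\sigma)\gtrsim1$ pointwise on $\{|\tr\wt\bfu^\beta|_1>E\}$, so $C(E,N)$ times its indicator is $\lesssim_{E,N}\sum_\sigma e_\rel$. Lemma~\ref{l:interpol-trace.g} then finishes.
\end{enumerate}
Every error is thus either absorbed into $-c\int d_\rel$ or bounded by $C(E,N)\int_0^TH_\rel\,\dd t$. Gronwall then gives the purely multiplicative bound with $E,N$ fixed.

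Your proposal contains these ingredients: Chebyshev on $\Gamma$, \eqref{eq:Hrel.coerc.complSg}, and your $S_p^{\wh\bfu}/S_b^{\wh\bfu}$ control of terms linear in $|u|_1$, which is sound. What breaks the argument is routing errors into an edi-controlled remainder.
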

\begin{proof} Lemma~\ref{l:evol.rel-en.gen} reduces our task to estimating appropriately the terms on the right-hand side of~\eqref{eq:evol.relen} in way that Gronwall's lemma becomes applicable.
	We first treat separately the estimates of bulk and interface integrals (Steps 1,2) before deriving in Step~3 the bound~\eqref{eq:stab.est.gen}.	\smallskip
	
	\noindent{\bfseries\em Step~1: Bulk terms.}\, We assert the following estimate for the bulk integrals appearing on the right-hand side of~\eqref{eq:evol.relen}: 
		\begin{align}\label{eq:bulk.terms}
			\begin{aligned}
			\int_{\Om_T}\rho_{\rm bulk}\,\dd \mathcal{L}^{1+d}(t,x)+\sum_{\sigma\in\{\pm\}}
			\int_{(0,T)\times\Om_{-\sigma}} &\rho_{\text{\rm bulk,rem},\sigma}\,\mathcal{L}^{1+d}(t,x)
			\\&\le
			-\frac{\mathfrak{a}}{4}\int_{\Om_T}d_\rel(u|U)\,\dd \mathcal{L}^{d+1}(t,x)
			+C(E,N)\int_0^TH_\rel(\wh\bfu|U)\dd t,	 		
		\end{aligned}
	\end{align}	
	where $\Om_T:=(0,T)\times\Om$.
	
	We start with some preliminary reformulations of the integrands:
	gathering terms involving $\nabla\vp_{\rm out},\nabla\vp_\beta$ in an extra contribution~$\rho_\text{\rm bulk,b}$, we write
	\begin{align*}
		\rho_\text{\rm bulk}&=\rho_\text{\rm bulk,g}+\rho_\text{\rm bulk,b},
	\end{align*}
	where
	\begin{align}\label{eq:bulk.g}
	\begin{aligned}	\rho_\text{\rm bulk,g}=
		\sum_{i=1}^n&\big(-4\DC_i\nabla \sqrt{u_i}\cdot\nabla\sqrt{u_i}+D_ih(u)f_i(u)+f_i(U)\big)
		\\&+\sum_{i,j=1}^n\bigg(
		\vp_{\text{\rm out}} 	\nabla(D_jh(U) D_i\zeta_j^*(u))\cdot A_i\nabla u_i
		+\vp_{\rm out}\nabla(D_{ij}h(U)\zeta_j^*(u))\cdot A_i\nabla U_i
		\\&\qquad\qquad-\vp_{\text{\rm out}}\big(D_jh(U) D_i\zeta_j^*(u)f_i(u)
		+D_{ij}h(U) \zeta_j^*(u)f_i(U)\big)
		\\&\qquad\qquad+\sum_{\beta\in\frakP}\Big(\vp_\beta \nabla(D_jh(U) D_i\xi_j^*(\wt\bfu^\beta))\cdot A_i\nabla u_i
		+\vp_\beta \nabla(D_{ij}h(U)\xi_j^*(\wt\bfu^\beta))\cdot A_i\nabla U_i
		\\&\quad\qquad\qquad\qquad\qquad -\vp_\beta\big(D_jh(U) D_i\xi_j^*(\wt\bfu^\beta)f_i(u)+D_{ij}h(U) \xi_j^*(\wt\bfu^\beta)f_i(U)\big)\Big)\bigg)	
\end{aligned}
\end{align}
	and
	\begin{align}\label{eq:rho.bb.C}
		\begin{aligned}
			\rho_\text{\rm bulk,b}&	= \nabla\vp_{\text{\rm out}}\cdot \sum_{i,j=1}^n\big(
			D_jh(U) D_i\zeta_j^*(u) A_i\nabla u_i
			+D_{ij}h(U)\zeta_j^*(u)A_i\nabla U_i\big)
			\\&\qquad+\sum_{\beta\in\frakP}\nabla\vp_\beta\cdot
			\sum_{i,j=1}^n\big(		D_jh(U) D_i\xi_j^*(\wt\bfu^\beta) A_i\nabla u_i
			+D_{ij}h(U)\xi_j^*(\wt\bfu^\beta)A_i\nabla U_i\big).
		\end{aligned}
	\end{align}
	Using~\eqref{eq:partition1} and recalling $\zeta_j^*(u)=\zeta^*(u)u_j$, $\xi_j^*(u,\tilde u)=\xi^*(u,\tilde u)u_j$, equation~\eqref{eq:rho.bb.C} can be rewritten as
	\begin{align}\label{eq:rho.bb.A}
		\begin{aligned}
			\rho_\text{\rm bulk,b}	&= \nabla\vp_{\text{\rm out}}\cdot \sum_{i,j=1}^n\big(
			D_jh(U) (D_i\zeta_j^*(u)-\delta_{ij}) A_i\nabla u_i
			+D_{ij}h(U)(\zeta^*(u)-1)u_jA_i\nabla U_i\big)
			\\&\qquad+\sum_{\beta\in\frakP}\nabla\vp_\beta\cdot
			\sum_{i,j=1}^n\big(D_jh(U)(D_i\xi_j^*(\wt\bfu^\beta)-\delta_{ij}) A_i\nabla u_i
			+D_{ij}h(U)(\xi^*(\wt\bfu^\beta)-1)u_jA_i\nabla U_i\big).
		\end{aligned}
	\end{align}
	\smallskip
	
	\noindent{\em Step~1.1: For the constant $\mathfrak a>0$ in~\ref{it:HP.diff}, it holds that }
	\begin{align}\label{eq:rho.bulk,g}
		\int_{\Om_T}\rho_{\rm bulk,g}\dd\mathcal{L}^{1+d}(t,x)\le -\frac{\mathfrak{a}}{2}\int_{\Om_T}d_\rel(u|U)\,\dd\mathcal{L}^{1+d}(t,x)+ C(E,N)\int_0^TH_\rel(\wh\bfu|U)\dd t.
	\end{align}	 
{\em Proof of~\eqref{eq:rho.bulk,g}.}
We rewrite~\eqref{eq:bulk.g}, using~\eqref{eq:partition1}, in the form
	\begin{align*}
		\rho_{\rm bulk,g}=\sum_{\beta\in\frakP\cup\{{\rm out}\}}\vp_\beta	\rho_{\rm bulk,g}^\beta,
	\end{align*}
	where for $\beta\in\frakP$
	\begin{align*}
		\rho_{\rm bulk,g}^\beta=
		&\sum_{i=1}^n\big(-4\DC_i\nabla \sqrt{u_i}\cdot\nabla\sqrt{u_i}+D_ih(u)f_i(u)+f_i(U)\big)
		\\&+\sum_{i,j=1}^n\nabla(D_jh(U) D_i\xi_j^*(\wt\bfu^\beta))\cdot A_i\nabla u_i
		+\nabla(D_{ij}h(U)\xi_j^*(\wt\bfu^\beta))\cdot A_i\nabla U_i
		\\& -\sum_{i,j=1}^n\big(D_jh(U) D_i\xi_j^*(\wt\bfu^\beta)f_i(u)+D_{ij}h(U) \xi_j^*(\wt\bfu^\beta)f_i(U)\big),
	\end{align*}
	and 
	\begin{align*}
		\rho_{\rm bulk,g}^{\rm out}=
		&\sum_{i=1}^n\big(-4\DC_i\nabla \sqrt{u_i}\cdot\nabla\sqrt{u_i}+D_ih(u)f_i(u)+f_i(U)\big)
		\\&+\sum_{i,j=1}^n\nabla(D_jh(U) D_i\zeta_j^*(u))\cdot A_i\nabla u_i
		+\nabla(D_{ij}h(U)\zeta_j^*(u))\cdot A_i\nabla U_i
		\\& -\sum_{i,j=1}^n\big(D_jh(U) D_i\zeta_j^*(u)f_i(u)+D_{ij}h(U) \zeta_j^*(u)f_i(U)\big).
	\end{align*}
	In the following, we outline the bound for $\rho_{\mathrm{bulk},g}^{\beta}$, $\beta\in\mathfrak{P}$. 
	In our reasoning, whenever possible, we will argue in a pointwise fashion,
	to be understood for $\mathcal{L}^{1+d}$-a.e.\ $(t,x)\in(0,T)\times (V_\beta\cap\Om_\sm)$, $\sm\in\{\pm\}$  (or for a pointwise determined representative).
	Specifically, we will distinguish three cases 
	depending on the value $\wt\bfu^\beta(t,x)\in [0,\infty)^{2n}=\mathcal{A}\dot\cup\mathcal{B}\dot\cup\mathcal{C}$, where $\mathcal{A},\mathcal{B},\mathcal{C}$ are as specified in~\eqref{eq:regions}. 
	
	\smallskip
	\noindent{Case $\wt\bfu^\beta(t,x)\in \mathcal{A}:$}\nopagebreak
	
	We first observe that $D_i\xi_j^*=\delta_{ij}$ in $\mathcal{A}$. Using also the fact that $D_{ij}h(U)=\frac{1}{U_i}\delta_{ij}$, we infer for $\mathcal{L}^{1+d}$-a.a.\ $(t,x)\in(0,T)\times (V_\beta\cap\Om_\pm)$ with $\wt\bfu^\beta(t,x)\in\mathcal{A}$,  
	\begin{align*}
		\rho_{\rm bulk,g}^\beta&=
		\sum_{i=1}^n\big(-4\DC_i\nabla \sqrt{u_i}\cdot\nabla\sqrt{u_i}+\nabla D_ih(U)\cdot A_i\nabla u_i
		+\nabla(\frac{u_i}{U_i})\cdot A_i\nabla U_i\big)
		\\&\qquad+\sum_{i=1}^n\big(D_ih(u)f_i(u)+f_i(U)-D_ih(U)f_i(u)
		-\frac{u_i}{U_i}f_i(U)\big)
		\\&=	4\sum_{i=1}^n\big(-\DC_i\nabla \sqrt{u_i}\cdot\nabla\sqrt{u_i}+\frac{\sqrt{u_i}}{\sqrt{U_i}}\nabla \sqrt{U_i}\cdot A_i\nabla \sqrt{u_i}
		+\big(\frac{\sqrt{u_i}}{\sqrt{U_i}}\nabla\sqrt{u_i}-\frac{u_i}{U_i}\nabla\sqrt{U_i}
		\big)\cdot A_i\nabla \sqrt{U_i}
		\\&\qquad+\sum_{i=1}^n\big(D_ih(u)f_i(u)+f_i(U)-D_ih(U)f_i(u)
		-\frac{u_i}{U_i}f_i(U)\big)
		\\&\le-4\mathfrak{a}\sum_{i=1}^n|\nabla \sqrt{u_i}-\frac{\sqrt{u_i}}{\sqrt{U_i}}\nabla \sqrt{U_i}|^2+\sum_{i=1}^n\Big(\big(\log\frac{u_i}{U_i}-\frac{u_i}{U_i}+1\big)f_i(u)+\big(\frac{u_i}{U_i}-1\big)(f_i(u)-f_i(U))\Big)
		\\&\le-4\mathfrak{a}\,d_\rel(u|U)
		+C(E)\sum_{i=1}^n|\sqrt{u_i}-\sqrt{U_i}|^2.
	\end{align*}
	In the last step, to estimate the second term in the penultimate line, we distinguished two cases: \\s1: $u_i\ge\iota/2$ for all $i=1,\dots,n$.
	\\s2: $u_{i_0}<\iota/2$ for some $i_0\in\{1,\dots,n\}$.
	
	In case, s1, we used the estimate $|\log s-s+1|\lesssim_\iota |\sqrt{s}-1|^2$ for $s\ge\iota/2$, while in the second case, we rely on the lower bound $|\sqrt{u_{i_0}}-\sqrt{U_{i_0}}|\gtrsim_\iota 1$ and the fact that $|u|_1\le E$. 
	
	To estimate the third term in the penultimate line, we used the local Lipschitz continuity of the reaction rates and the bound 
	\begin{align}	\label{eq:diffuU}
	|u_i-U_i|=|(\sqrt{u_i}-\sqrt{U_i})(\sqrt{u_i}+\sqrt{U_i})|\lesssim_E|\sqrt{u_i}-\sqrt{U_i}|,
	\end{align}
	which is valid in the present case due to $|u|_1\le E$.
	
	We thus conclude, in $(0,T)\times (V_\beta\cap\Om_\sm)$, $\sm\in\{\pm\}$, the estimate 
	\begin{align}\label{eq:rho.g.A}
		\begin{aligned}
		\rho_{\rm bulk,g}^\beta\chi_{\{\wt\bfu^\beta\in\mathcal{A}\}}\le-4\mathfrak{a}\,d_\rel(u|U)\chi_{\{\wt\bfu^\beta\in\mathcal{A}\}}
		+C(E)\sum_{i=1}^n|\sqrt{u_i}-\sqrt{U_i}|^2\chi_{\{\wt\bfu^\beta\in\mathcal{A}\}\cap S_g^{\wh\bfu}}
		+C(E)\sum_{i=1}^n\chi_{\{\wt\bfu^\beta\in\mathcal{A}\}\setminus S_g^{\wh\bfu}},		
		\end{aligned}
	\end{align}	
 where for $\mathcal{Y}\subset\Om_T$ measurable, $\chi_{\mathcal{Y}}=\chi_{\mathcal{Y}}(t,x)$ denotes the characteristic function of $\mathcal{Y}$, i.e.\
\[\chi_{\mathcal{Y}}(t,x)=\begin{cases}
	1&\text{if }(t,x)\in \mathcal{Y},
	\\0&\text{if }(t,x)\in\Om_T\setminus\mathcal{Y},
\end{cases}\]
and where the last term in~\eqref{eq:rho.g.A} arises from the rough bound $|\sqrt{u_i}-\sqrt{U_i}|^2\le C(E)$ for $|u|_1\le E$.
	\smallskip
	
	\noindent{Case $\wt\bfu^\beta(t,x)\in \mathcal{B}:$}\nopagebreak
	
We estimate, using~\ref{it:HP.diff}, the fact that $\sum_{i=1}^n(D_ih(u)f_i(u)+f_i(U))\le C$, and  $\xi^*_j(\wt\bfu^\beta)=u_j\xi^*(\wt\bfu^\beta)$,
	\begin{align}\label{eq:rho.bulk.g.beta}
	\rho_{\rm bulk,g}^\beta\le -4\mathfrak{a}\sum_{i=1}^n|\nabla\sqrt{u_i}|^2 
	+C\sum_{i=1}^n\big(|\nabla\xi^*(\wt\bfu^\beta)|+|u||\nabla D_i\xi^*(\wt\bfu^\beta)|\big)|\nabla u_i|
+	C(E,N)(|\nabla u|+|\nabla\tilde u^\beta|+1).
\end{align}
As in~\cite{Fischer_2017,Hopf_2022}, the key point to cope with the non-negative terms on the right-hand side that are quadratic in the gradient of the renormalised solution is to use the specific decay properties~\eqref{eq:decay.xi} of $\xi^*$, which imply the bound
\begin{align*}
\sum_{i=1}^n\big(|\nabla\xi^*(\wt\bfu^\beta)|+|u||\nabla D_i\xi^*(\wt\bfu^\beta)|\big)|\nabla u_i|
\le \frac{C}{N}\sum_{i=1}^n\big(|\nabla\sqrt{u_i}|^2+|\nabla\sqrt{\tilde u_i^\beta}|^2\big).
\end{align*}
For $N$ large enough, the first term on the right-hand side can immediately be absorbed by the diffusive dissipation, i.e., by the first term on the right-hand side of~\eqref{eq:rho.bulk.g.beta}.
 For the term involving the reflected densities $\tilde u_i^\beta$, absorption will only be possible later at the level of spatial integrals over $\Om$, see~\eqref{eq:bulk.g.integral} below. 

We now combine the estimates above with the following elementary relations, valid due to $|u|_1\le E^N$,
 \begin{align*}
	\sum_{i=1}^n|\nabla\sqrt{u_i}|^2\le 2d_\rel(u|U)+C(E,N)\quad\text{and}\quad 
	d_\rel(u|U)\le 2\sum_{i=1}^n|\nabla\sqrt{u_i}|^2+C(E,N),
\end{align*}
and a change of variables in the term involving $\nabla\sqrt{\tilde u_i^\beta}$, recalling also the Lipschitz bound $|D\Phi_{\beta}|\le C$ for $\Phi_{\beta}=\Phi_\beta^{-1}$ as in~\eqref{eq:def.Phi.beta},
to deduce, for $N$ large enough,
\begin{align}\label{eq:rho.g.B}
	\rho_{\rm bulk,g}^\beta&\le -\mathfrak{a}\,d_\rel(u|U)
	+\frac{C}{N}d_\rel(u|U)\circ\Phi_{\beta}+C(E,N)\qquad \text{ in }\{\wt\bfu^\beta\in \mathcal{B}\},
\end{align}
where, as before,  the constant $C$ is independent of $E,N$, but may change from line to line.
\smallskip
	
	\noindent{Case $\wt\bfu^\beta(t,x)\in \mathcal{C}:$}\nopagebreak
	
	We observe that $\xi^*\equiv0$ and hence $\xi_j^*\equiv0$ in $\mathcal{C}$. Therefore,
	\begin{align}\label{eq:rho.g.C}\begin{aligned}
			\rho_{\rm bulk,g}^\beta&=
			\sum_{i=1}^n\big(-4\DC_i\nabla \sqrt{u_i}\cdot\nabla\sqrt{u_i}+D_ih(u)f_i(u)+f_i(U)\big)
			\\&\le -2\mathfrak{a}\,d_\rel(u|U)+C\qquad \text{ in }\{\wt\bfu^\beta\in \mathcal{C}\}.
		\end{aligned}
	\end{align}
	\smallskip
	
To unify the three cases above, we first recall the partition $\mathcal{A}\dot\cup\mathcal{B}\dot\cup\mathcal{C}=[0,\infty)^{2n}$.  We further observe that, thanks to~\eqref{eq:xi.less.1}, 
	$\chi(x,\wh\bfu(t,x))<1$ for $\mathcal{L}^{1+d}$-a.a.\ 
	$(t,x)\in W_T^\beta:=\big((0,T)\times  \{\vp_\beta>0\}\big)\bigcap \{\wt\bfu^\beta \in \mathcal{B}\cup\mathcal{C}\}$, 
and hence, $\mathcal{L}^{1+d}(W_T^\beta \cap S_g^{\wh\bfu})=0$, where here and below \[S_g^{\wh\bfu}:=\{(t,x)\in\Om_T\mid \chi(x,\wh\bfu(t,x))=1\}=\dot\bigcup_{t\in(0,T)}\{t\}\times S_g^{\wh\bfu(t)}.\]
Thus, combining estimates~\eqref{eq:rho.g.A},~\eqref{eq:rho.g.B}, and~\eqref{eq:rho.g.C}, we deduce $\mathcal{L}^{1+d}$-a.e.\ in $\Om_T$,
	 \begin{align}\label{eq:bulk.g.beta}
	 	\begin{aligned}
	 	\vp_\beta\rho_{\rm bulk,g}^\beta\le &\vp_\beta\Big(-\mathfrak{a}d_\rel(u|U)
	 	+\frac{C}{N}d_\rel(u|U)\circ \Phi_{\beta}
	 	\\&\qquad
	 	+C(E,N)\big(\sum_{i=1}^n|\sqrt{u_i}-\sqrt{U_i}|^2\chi_{S_g^{\wh\bfu}}+\chi_{\Om_T\setminus S_g^{\wh\bfu}}\big)\Big).
	 \end{aligned}
 \end{align} 
	 
	For	$\rho_{\mathrm{bulk},g}^{\mathrm{out}}$ we obtain similar estimates by distinguishing the cases $|u|_1\le E$, $E<|u|_1\le E^N$, $|u|_1>E^N$, and exploiting~\eqref{eq:zeta.less.1}, resulting in the bound
	\begin{align}\label{eq:bulk.g.out}
		\vp_{{\rm out}}\rho_{\mathrm{bulk},g}^{\mathrm{out}}\le \vp_{{\rm out}}\Big(-\mathfrak{a}d_\rel(u|U)+C(E,N)\big(\sum_{i=1}^n|\sqrt{u_i}-\sqrt{U_i}|^2\chi_{S_g^{\wh\bfu}}+\chi_{\Om_T\setminus S_g^{\wh\bfu}}\big)\Big).
	\end{align}
	Combining~\eqref{eq:bulk.g.beta} and~\eqref{eq:bulk.g.out}, we infer, upon possibly further increasing $N$,
	\begin{align}\label{eq:bulk.g.integral}
	&	\int_{\Om_T}\sum_{\beta\in\frakP\cup\{{\rm out}\}}\vp_\beta\rho_{\rm bulk,g}^\beta\dd\mathcal{L}^{1+d}(t,x)
			\\&\le -\frac{\mathfrak a}{2}\int_{\Om_T}d_\rel(u|U)\dd\mathcal{L}^{1+1}(t,x)
			+C(E,N)\int_{S_g^{\wh\bfu}}\sum_{i=1}^n|\sqrt{u_i}-\sqrt{U_i}|^2\,\dd\mathcal{L}^{d+1}(t,x)
			+C(E,N)\mathcal{L}^{1+d}(\Om_T\setminus S_g^{\wh\bfu})\nonumber
		\\&	\le -\frac{\mathfrak{a}}{2}\int_{\Om_T}d_\rel(u|U)\,\dd\mathcal{L}^{1+d}(t,x)
			+C(E,N)\int_0^T H_\rel(\wh\bfu|U)\dd t,\nonumber
	\end{align}
	where in the last step we used the coercivity estimates~\eqref{eq:Hrel.coerc.Sg},~\eqref{eq:Hrel.coerc.complSg} pointwise in time to  deduce, after an appeal to Fubini's theorem, the bound
	$\mathcal{L}^{1+d}(\Om_T\setminus S_g^{\wh\bfu})=\int_0^T\mathcal{L}^d(\Om\setminus S_g^{\wh\bfu(t)})\,\dd t\lesssim \int_0^TH_\rel(\wh\bfu(t)|U(t))\,\dd t$. 
	
		This concludes Step~1.1.
	\smallskip
	
	\noindent{\em Step~1.2: Estimate of $\rho_{\rm bulk,b}$}	
	
	Observe that $\{\nabla\vp_\beta\neq0\}\subseteq\{\vp_\beta>0\}$ for all $\beta\in\frakP$. Hence, 
	$\mathcal{L}^{1+d}$-a.e.\ in $S_g^{\wh\bfu}\bigcap\big((0,T)\times\{\nabla\vp_\beta\neq0\}\big)$ it holds that $\xi^*(\wt\bfu^\beta)=1$ and $D_i\xi_j^*(\wt\bfu^\beta)=\delta_{ij}$. Similarly, $\mathcal{L}^{1+d}$-a.e.\ in $S_g^{\wh\bfu}\bigcap\big((0,T)\times\{\nabla\vp_{{\textrm{out}}}\neq0\}\big)$ it holds that $\zeta^*(u)=1$ and $D_i\zeta_j^*(u)=\delta_{ij}$. 
	Hence, from~\eqref{eq:rho.bb.A} we infer $\rho_{\rm bulk,b}=0$ in $S_g^{\wh\bfu}$.
	We further notice that formula~\eqref{eq:rho.bb.C} implies that $\rho_{\rm bulk,b}=0$  in $\{|u|_1\ge E^N\}$, since then $\zeta^*(u)=0$ and $\xi^*(\wt\bfu^\beta)=0$.
	Thus, an inequality of the form 
	\begin{align*}
		\rho_{\rm bulk,b}\le \ve d_\rel(u|U)+C_\ve(E,N)\chi_{\Om_T\setminus S_g^{\wh\bfu}}
	\end{align*}
	for any $\ve\in(0,1]$ can be obtained by straightforward estimates.
	(In the present case, the specific decay~\eqref{eq:decay.trunc} is not needed.)
	Integrating over $\Om_T$ and using~\eqref{eq:Hrel.coerc.complSg}, we infer 
	\begin{align}\label{eq:rhobb.basic}
		\int_{\Om_T}\rho_{\rm bulk,b}\,\dd\mathcal{L}^{1+d}(t,x)
		\le \ve\int_{\Om_T} d_\rel(u|U)\,\dd\mathcal{L}^{1+d}(t,x)+C_{1,\ve}(E,N)\int_0^TH_\rel(\wh\bfu|U)\dd t.
	\end{align}
	\smallskip
	
	\noindent{\em Step~1.3: Remainder bulk terms}
	
We next turn to the remaining bulk integrand $\rho_{\text{\rm bulk,rem},\sigma}$, defined on $(0,T)\times\Om_{-\sm}$ and given by
	\begin{align*}
		\rho_{\text{\rm bulk,rem},\sigma}&=	
		\sum_{i,j=1}^n\sum_{\beta\in\frakP}\bigg(
		\nabla \big((\vp_\beta D_jh(U)D_{i+n}\xi_j^*(\wt\bfu^\beta))\circ\Phi_{\beta,-\sigma}\big)
		\cdot A_i\nabla u_i
		\\&\hspace{7em}\qquad
		-(\vp_\beta D_jh(U)D_{i+n}\xi_j^*(\wt\bfu^\beta))\circ \Phi_{\beta,-\sigma}f_i(u)\bigg)
		=:\sum_{\beta\in\frakP}\rho_{b,r,\sm,\beta},
	\end{align*}
	where 
	\[\rho_{b,r,\sm,\beta}:=
	\sum_{i,j=1}^n 	\nabla \big((\vp_\beta D_jh(U)D_{i+n}\xi_j^*(\wt\bfu^\beta))\circ\Phi_{\beta,-\sigma}\big)
	\cdot A_i\nabla u_i-(\vp_\beta D_jh(U)D_{i+n}\xi_j^*(\wt\bfu^\beta))\circ \Phi_{\beta,-\sigma}f_i(u).\]
	Thus, it suffices to estimate each of the integrals
	\begin{align*}
	\int_{(0,T)\times(\Om_{-\sm}\cap V_\beta)}\rho_{b,r,\sm,\beta}\,\dd\mathcal{L}^{1+d}(t,x)
	=	\int_{(0,T)\times(\Om_{\sm}\cap V_\beta)}\rho_{b,r,\sm,\beta}\circ\Phi_{\beta}\,\dd\mathcal{L}^{1+d}(t,x),
	\end{align*}
	where we changed variables via $\Phi_{\beta}$ using $|\det D\Phi_{\beta}|=1$.
	The integrand on the right-hand side can be written as
	\begin{align*}
	\rho_{b,r,\sm,\beta}\circ\Phi_{\beta}&=
	\sum_{i,j=1}^n 	\Big(\nabla (\vp_\beta D_jh(U)D_{i+n}\xi_j^*(\wt\bfu^\beta))
	\cdot \tilde A_i\nabla \tilde u_i^\beta
	-(\vp_\beta D_jh(U)D_{i+n}\xi_j^*(\wt\bfu^\beta))f_i(\tilde u^\beta)\Big),
	\end{align*}
	where $\tilde A_i :=\big(D\Phi_\beta \,A_i(D\Phi_{\beta})^T\big)\circ \Phi_{\beta}^{-1}$.
	We assert that \[\rho_{b,r,\sm,\beta}\circ\Phi_{\beta}=0\qquad\mathcal{L}^{1+d}\text{-a.e.\ in }S_g^{\wh\bfu}\cap\big((0,T)\times(\Om_{\sm}\cap V_\beta)\big).\]
	The assertion follows by arguing similarly as in Step~1.2: we observe that in $S_g^{\wh\bfu}\bigcup\big((0,T)\times\{\vp_\beta>0\}\big)$ it must hold that $\xi^*(\wt\bfu^\beta)=1$ and hence, by~\eqref{eq:xi.less.1}, $\wt\bfu^\beta\in\mathcal{A}$. Owing to $D_{i+n}\xi_j^*\equiv0$ in $\mathcal{A}$, the assertion follows.
	In the complementary case $\big((0,T)\times(\Om_{\sm}\cap V_\beta)\big)\setminus S_g^{\wh\bfu}$,
	we use estimates similar to those leading to~\eqref{eq:rho.g.B}, \eqref{eq:rho.g.C} and take advantage of the decay~\eqref{eq:decay.trunc} to deduce the rough bound
	\begin{align*}
		\int_{(0,T)\times(\Om_{\sm}\cap V_\beta)}\rho_{b,r,\sm,\beta}\circ\Phi_{\beta}\,\dd x\dd t
&\le \frac{C
		}{N}\int_{\Om_T}d_\rel(u|U)\,\dd\mathcal{L}^{1+d}(t,x)+ C(E,N)\,\int_0^T\mathcal{L}^{d}(\Om\setminus S_g^{\wh\bfu(t)})\,\dd t
		\\&\le \frac{C
		}{N}\int_{\Om_T}d_\rel(u|U)\,\dd\mathcal{L}^{1+d}(t,x)+ C(E,N)\,\int_0^TH_\rel(\wh\bfu|U)\dd t,
	\end{align*}
		where the second step follows from~\eqref{eq:Hrel.coerc.complSg}.
	Since this is true for all $\beta\in\frakP$ and $\sm\in\{\pm\}$, we arrive at
		\begin{align*}
	\sum_{\sigma\in\{\pm\}}\int_{(0,T)\times\Om_{-\sigma}} \rho_{\text{\rm bulk,rem},\sigma}\,\mathcal{L}^{1+d}(t,x)\le \frac{C_0
	}{N}\int_{\Om_T}d_\rel(u|U)\,\dd\mathcal{L}^{1+d}(t,x)+C_1(E,N)\,\int_0^TH_\rel(\wh\bfu|U)\dd t.
	\end{align*}

\smallskip
	
	Combining Steps~1.1--1.3 and choosing $\ve$ in~\eqref{eq:rhobb.basic} small enough such that $\ve\le\frac{\mathfrak{a}}{8}$ and 
	$N$ sufficiently large so that $\frac{C_0}{N}\le \frac{\mathfrak{a}}{8}$ 
	yields the bound~\eqref{eq:bulk.terms}.
\smallskip
	
\noindent{\bfseries\em Step~2: Interface terms.}\,  The purpose of this step is to derive the bound
\begin{align}\label{eq:int-est.all}
	\int_0^T\sum_{\sigma\in\{\pm\}} \Big(\int_{\Gamma_{\sigma}}\rho_{\text{\rm int},\sigma}\,\dd \mathcal{H}^{d-1}+\int_{\Gamma_{-\sigma}}\rho_{\text{\rm int,rem},\sigma}\,\dd\mathcal{H}^{d-1}\bigg)\dd t
	\le C_{*}(E,N)\int_0^T\sum_{\sigma\in\{\pm\}} \int_{\Gamma} e_\rel(u^\sigma|U^\sigma)\,\dd \mathcal{H}^{d-1}\dd t,
\end{align}
where $e_\rel$ is given by~\eqref{eq:def.erel}, and $\rho_{\text{\rm int},\sigma}$, $\rho_{\text{\rm int,rem},\sigma}$ are defined in Lemma~\ref{l:evol.rel-en.gen}.

	We first rewrite the integrands $\rho_{\text{\rm int},\sigma}$ and $\rho_{\text{\rm int,rem},\sigma}$, $\sm\in\{\pm\}$, using identity~\eqref{eq:partition1} and the fact that $\supp\vp_{\rm out}\cap\Gamma=\emptyset$,
	\begin{align}\label{eq:exp.beta.int}
		\rho_{\text{\rm int},\sigma}=\sum_{\beta\in\frakP}\vp_\beta \rho_{\text{\rm int},\sigma}^\beta \quad\text{($\Gamma_\sm$-trace)},\qquad\quad
			\rho_{\text{\rm int,rem},\sigma}=\sum_\beta\vp_\beta\circ \Phi_{\beta,-\sigma} \rho_{\text{\rm int,rem},\sigma}^\beta\quad \text{($\Gamma_{-\sm}$-trace)},
	\end{align}
	where
	\begin{align*}
		\rho_{\text{\rm int},\sigma}^\beta
		&=-\sum_{i=1}^nD_ih(u^\sm) r_i^\sigma(\bfu)
		+\sum_{i,j=1}^n\Big( D_jh(U^\sm)D_i\xi_j^*(\wt\bfu^\beta)r_i^\sigma(\bfu)
		+ D_{ij}h(U^\sm) \xi_j^*(\wt\bfu^\beta)r_i^\sigma(\bfU)\Big),
		\\\rho_{\text{\rm int,rem},\sigma}^\beta&=\sum_{i,j=1}^n
		(D_jh(U^\sm)
		D_{i+n}\xi_j^*(\wt\bfu^\beta))\circ\Phi_{\beta,-\sigma}
		r_i^{-\sigma}(\bfu).
	\end{align*}
	Note that here and in the rest of this step, by adding the superindex $\sm\in\{\pm\}$ in $u^\sm,U^\sm$ we explicitly indicate the compartment where the $\Gamma$-trace is taken.
	We now assert that, for each $\beta\in\frakP$, it holds,  $\mathcal{L}^1\otimes\mathcal{H}^{d-1}$-a.e.\ in $(0,T)\times(\Gamma\cap V_\beta)$,
\begin{subequations}\label{eq:int.bound.both}
	\begin{align}
		\label{eq:int.bound.main}
		&\sum_{\sigma\in\{\pm\}}\rho_{\text{\rm int},\sigma}^\beta
		\lesssim_{E,N} \sum_{\sigma\in\{\pm\}}\sum_{i=1}^n|\sqrt{u_i^\sigma}-\sqrt{U_i^\sigma}|^2,
		\\&\label{eq:int.bound.rem}	\sum_{\sigma\in\{\pm\}}|\rho_{\text{\rm int,rem},\sigma}^\beta|\lesssim_{E,N} 
			\sum_{\sigma'\in\{\pm\}}\sum_{i=1}^n|\sqrt{u_i^{\sigma'}}-\sqrt{U_i^{\sigma'}}|^2.
	\end{align}	
	\end{subequations}
	{\em Proof of~\eqref{eq:int.bound.main}.}
	We can adopt the same case distinction as for the bulk terms. Indeed, by construction of the extension $\tilde u^\beta$, it holds that
	\begin{align}
		|\tr_{|\Gamma_\sigma\cap V_\beta}\wt\bfu^\beta|_1=|(\tr_{|\Gamma_\sigma\cap V_\beta}u, \tr_{|\Gamma_{-\sigma}\cap V_\beta}u)|_1=|\tr_{|\Gamma_{-\sigma}\cap V_\beta}\wt\bfu^\beta|_1\quad\text{ $\mathcal{L}^1\otimes\mathcal{H}^{d-1}$-a.e.\ in }(0,T)\times(\Gamma\cap V_\beta).
	\end{align}
	Hence, the condition $\tr_{\Gamma_\sigma}\wt\bfu^\beta(t,z)\in \mathcal{R}$ for $\mathcal{R}\in \{\mathcal{A},\mathcal{B},\mathcal{C}\}$ is independent of $\sigma$ and thus
	well-defined for $\mathcal{L}^1\otimes\mathcal{H}^{d-1}$-a.a.\ $(t,z)\in(0,T)\times(\Gamma\cap V_\beta)$.
	\smallskip
	
	\noindent{Case $\tr_{\Gamma_\pm}\wt \bfu^\beta(t,z)\in \mathcal{A}$:}\nopagebreak
	
	Using $D_i\xi_j^*(\wt\bfu^\beta)=\delta_{ij}$ and $\xi^*_j(\wt\bfu^\beta)=u_j$ for $\wt\bfu^\beta\in\mathcal{A}$, we have 
	\begin{align*}
		\sum_{\sigma\in\{\pm\}}\rho_{\text{\rm int},\sigma}^\beta&=- \sum_{\sigma\in\{\pm\}}\sum_{i=1}^n\big(D_ih(u^\sm) r_i^\sigma(\bfu)
		-D_ih(U^\sm)r_i^\sigma(\bfu)-D_{ij}h(U^\sm)u_j^\sm r_i^\sigma(\bfU)\big)
		\\&=-\sum_{i=1}^n\bigg(\Big(\log \frac{u_i^\sigma}{U_i^\sigma}- \log \frac{u_i^{-\sigma}}{U_i^{-\sigma}}\Big)r_i^\sigma(\bfu)
		-\Big(\frac{u_i^\sigma}{U_i^\sigma}-\frac{u_i^{-\sigma}}{U_i^{-\sigma}}\Big)r_i^\sigma(\bfU)\bigg)
		\\&=- \sum_{i=1}^n\Big(\log \frac{u_i^\sigma}{U_i^\sigma}-\frac{u_i^\sigma}{U_i^\sigma}+1- \big(\log \frac{u_i^{-\sigma}}{U_i^{-\sigma}}-\frac{u_i^{-\sigma}}{U_i^{-\sigma}}+1\big)\Big)r_i^\sigma(\bfu)
		-\sum_{i=1}^n\Big(\frac{u_i^\sigma}{U_i^\sigma}-\frac{u_i^{-\sigma}}{U_i^{-\sigma}}\Big)(r_i^\sigma(\bfu)-r_i^\sigma(\bfU))
		\\&=:I+II.
	\end{align*}
	To estimate the first term in the last line, we distinguish once more the two subcases: 
	\\s1: $u_i^\sigma\ge\iota/2$ for all $i=1,\dots,n$, $\sigma\in\{\pm\}$.
	\\s2: $u_{i_0}^{\sigma_0}<\iota/2$ for some $i_0\in\{1,\dots,n\}$ and some $\sigma_0\in\{\pm\}$.
	In case s1, we use again the bound $|\log s-s+1|\lesssim_\iota |\sqrt{s}-1|^2$) to deduce 
	\begin{align}
		I\lesssim_E \sum_{\sigma\in\{\pm\}}\sum_{i=1}^n|\sqrt{u_i^\sigma}-\sqrt{U_i^\sigma}|^2.
	\end{align}
	In the subcase s2, we estimate, using~\eqref{eq:R.diss},
	\begin{align*}
		I\le \underbrace{
			-\sum_{\sigma\in\{\pm\}}\sum_{i=1}^nD_ih(u^\sm) r_i^\sigma(\bfu)
			}_{\le0}+C(E)
		\lesssim_E\sum_{\sigma\in\{\pm\}}\sum_{i=1}^n|\sqrt{u_i^\sigma}-\sqrt{U_i^\sigma}|^2,
	\end{align*}
	where the second step follows from the coercivity property
	\begin{align*}
		\sum_{\sigma\in\{\pm\}}\sum_{i=1}^n|\sqrt{u_i^\sigma}-\sqrt{U_i^\sigma}|^2\ge c>0,
	\end{align*}
	valid for $\mathcal{L}^1\otimes\mathcal{H}^{d-1}$-a.e.\ $(t,z)\in(0,T)\times\Gamma$ for which s2 is satisfied.
	
	For the term $II$, we use the local Lipschitz regularity of $r_i^\sm$ and argue as in~\eqref{eq:diffuU} to deduce
	\begin{align*}
		II\lesssim_E\sum_{\sigma\in\{\pm\}}\sum_{i=1}^n|\sqrt{u_i^\sigma}-\sqrt{U_i^\sigma}|^2.
	\end{align*}
	
	\smallskip
	
	\noindent{Case $\tr_{\Gamma_\pm}\wt\bfu^\beta(t,z)\in \mathcal{B}:$} \nopagebreak
	
	In this case, for $E$ large enough (depending on $\|\bfU\|_{L^\infty_{t,x}}$), we have the coercivity property
	\begin{align}\label{eq:coerc.int}
		\sum_{\sigma\in\{\pm\}} \sum_{i=1}^n|\sqrt{u_i^\sigma}-\sqrt{U_i^\sigma}|^2\gtrsim 1.
	\end{align}
	Thus,
	\begin{align*}	
		\sum_{\sigma\in\{\pm\}}\rho_{{\rm int},\sigma}^\beta&=\underbrace{-\sum_{\sigma\in\{\pm\}}\sum_{i=1}^nD_ih(u^\sm) r_i^\sigma(\bfu)}_{\le0}
		+\sum_{\sigma\in\{\pm\}} \sum_{i,j=1}^n\big(D_jh(U^\sm)D_i\xi_j^*(\wt \bfu^\beta)r_i^\sigma(\bfu)
		+D_{ij}h(U^\sm)\xi_j^*(\wt \bfu^\beta)r_i^\sigma(\bfU)\big)
		\\&\lesssim_{E,N}\sum_{\sigma\in\{\pm\}} \sum_{i=1}^n|\sqrt{u_i^\sigma}-\sqrt{U_i^\sigma}|^2.
	\end{align*}
	
	\smallskip
	
	\noindent{Case $\tr_{\Gamma_\pm}\wt\bfu^\beta(t,z)\in \mathcal{C}:$}\nopagebreak
	
	We observe that $\xi^*\equiv0$ and hence $\xi_j^*\equiv0$ in $\mathcal{C}$. Therefore, 
	\begin{align*}
		\sum_{\sigma\in\{\pm\}} \rho_{\text{\rm int},\sigma}^\beta=
	-\sum_{\sigma\in\{\pm\}}\sum_{i=1}^nD_ih(u^\sigma) r_i^\sigma(\bfu)\le0.
	\end{align*}
	Combining the cases above implies~\eqref{eq:int.bound.main}.
	
		{\em Proof of~\eqref{eq:int.bound.rem}.} 
	
	By construction, $|\wt\bfu^\beta\circ\Phi_{\beta,-\sigma}|_1=|\wt\bfu^\beta|_1$. Thus, if
	$|\wt\bfu^\beta|_1\le E$ or $|\wt\bfu^\beta|_1\ge E^N$, then 
	$D_{i+n}\xi_j^*(\wt\bfu^\beta\circ\Phi_{\beta,-\sigma})=0$, and hence
	$\rho_{\text{\rm int,rem},\sigma}^\beta=0$.
	In the remaining case
	 $\tr_{\Gamma_\pm}\wt\bfu^\beta(t,z)\in \mathcal{B}$, 
	 the fact that $\big|(u^{\sigma}_{|_{\Gamma}},u^{-\sigma}_{|_{\Gamma}})\big|_1=|\bfu_{|\Gamma}|_1$ and the coercivity~\eqref{eq:coerc.int} yield 
	\begin{align*}
		|\rho_{\text{\rm int,rem},\sigma}^\beta|\le C\sum_{i,j=1}^n|D_{i+n}\xi_j(u^{\sigma}_{|_{\Gamma\cap V_\beta}},u^{-\sigma}_{|_{\Gamma\cap V_\beta}})r_i^{-\sigma}(\bfu)|
		\lesssim_{E,N} \sum_{\sigma'\in\{\pm\}}\sum_{i=1}^n|\sqrt{u_i^{\sigma'}}-\sqrt{U_i^{\sigma'}}|^2,
	\end{align*}
	which establishes~\eqref{eq:int.bound.rem}.

Taking the sum over $\sm\in\{\pm\}$ in~\eqref{eq:exp.beta.int} and 
inserting the pointwise bounds~\eqref{eq:int.bound.both}, we deduce~\eqref{eq:int-est.all} upon integration over $z\in\Gamma$ and $t\in(0,T)$.
	\smallskip
	
	\noindent {\bfseries\em Step~3: Conclusion.}\,
	We use Lemma~\ref{l:interpol-trace.g} with $\ve'=\ve/C_*(E,N)$ to bound the right-hand side of~\eqref{eq:int-est.all} above by 
	\begin{align*}
		\ve\int_0^T\sum_{\sm\in\{\pm\}}\int_{\Om_\sm}d_\rel(u|U)\,\dd x\dd t+C_{\ve}(E,N) \int_0^T H_\rel(\wh\bfu|U)\,\dd t.
	\end{align*}
Invoking Lemma~\ref{l:evol.rel-en.gen} and inserting Step 1 and the upper bound on the left-hand side of~\eqref{eq:int-est.all} derived in the previous line, we deduce for $\ve=\mathfrak{a}/8$ 
	\begin{align*}
		\eval{H_\rel(\wh\bfu|U)}_{t=0}^{t=T}	
		\le -\frac{\mathfrak{a}}{8}\int_0^T\sum_{\sm\in\{\pm\}}\int_{\Om_\sm}d_\rel(u|U)\,\dd x\dd t
		+C_{2}(E,N)\int_0^TH_\rel(\wh\bfu|U)\dd t.
	\end{align*}	
	Hence, for a.e.\ $0<T<T^*$,
	\[H_\rel(\wh\bfu(T)|U(T))\le H_\rel(\wh\bfu_0|U_0)+C_{2}(E,N)\int_0^TH_\rel(\wh\bfu|U)\dd t.\] 
	Appealing to Gronwall's lemma yields the asserted estimate~\eqref{eq:stab.est.gen} with $C(T)=\ee^{C_{2}(E,N)T}$. 
	
	This completes the proof of Proposition~\ref{prop:stab.gen}, which entails Theorem~\ref{thm:unique}.
\end{proof}	

\section{Existence of dissipative renormalised solutions}\label{sec:existence}
Throughout this section, we assume the hypotheses of Theorem~\ref{thm:ex} and adopt the notations and conventions introduced in Section~\ref{sec:main.results}.
\subsection{Approximating systems and a priori estimates}
We first consider a problem with regularised reaction and interface transmission rates preserving entropic structure and quasi-positivity.
The approximate system with parameter $\ve\in(0, 1]$ formally reads as
\begin{equation}\label{approx_sys}
	\begin{cases}
		\partial_t u_{\eps,i}^{\sm} - \nabla\cdot(A_i \nabla u_{\eps,i}^{\sm}) = f_{\eps,i}(u_{\ve}^{\sm}), &x\in\Omega_{\sm},\\
	-A_i \nabla u_{\eps,i}^{\sm}\cdot \nu^\sigma = r_{\eps,i}^{\sm}(\bfu_{\ve}), &x\in\Gamma,\\
	-A_i \nabla u_{\eps,i}^{\sm}\cdot \nu^\sigma = 0, &x\in \partial\Omega_{\sm}\setminus\Gamma,\\
		u_{\eps,i}^{\sigma}(0,x) = u_{0,\eps,i}^{\sm}(x), &x\in\Osm,
	\end{cases}
\end{equation}
where we define, for $s\in[0,\infty)^n,\bfs\in[0,\infty)^{2n}$,
\begin{equation*}
	f_{\eps,i}(s):= \frac{f_{i}(s)}{1 + \ve |f(s)|}, \quad r_{\eps,i}^{\sm}(\bfs):= \frac{r_{i}^{\sm}(\bfs)}{1 + \ve|r^{\sm}(\bfs)|
		 }, \quad \text{ and } \quad u_{0,\eps,i}^\sm(x) = \min\{u_{0,i}^{\sm}(x),\eps^{-1}\}.
\end{equation*}

Global existence of non-negative weak solutions to this approximate system can be obtained in a standard way.
\begin{proposition}\label{prop:ex.approx}
	Let $\eps\in(0,1]$. 
	There exists a unique non-negative weak solution $\bfu_\ve=(u_{\ve,i}^\sm)$, $u_{\ve,i}^\sm\ge0$,
	to the approximating system \eqref{approx_sys} in the sense that
    \begin{equation*}
        u_{\eps,i}^{\sm}\in L^2(0,T;H^1(\Osm)), \quad\partial_t u_{\eps,i}^{\sm}\in L^2(0,T;(H^1(\Osm))^*)
    \end{equation*}
with $u_{\eps,i}^{\sm}(0)=u_{0,\eps,i}^{\sm}$ in $L^2(\Om_\sm)$,
    and for every test function $\psi \in L^2(0,T;H^1(\Omega_\sm))$ it holds
	\begin{equation}\label{eq:u-eps.weak}
		\int_0^{T}\la \partial_t u_{\eps,i}^{\sm}, \psi\ra_{(H^1(\Omega_\sm))^*,H^1(\Omega_\sm)} + \int_0^T\la A_i\nabla u_{\eps,i}^{\sm}, \nabla \psi\ra_{\sm} = -\int_0^T\!\!\int_{\Gamma}r_{\eps,i}^{\sm}(\bfu_\ve)\psi \,\dd \mathcal{H}^{d-1} \dd t + \int_0^T\!\!\int_{\Omega_\sm}f_{\eps,i}(u_\ve^{\sm})\psi \,\dd x\dd t,
	\end{equation}
	where $\la \cdot, \cdot \ra_{\sm}$ is the inner product in $L^2(\Omega_\sm)$. Moreover, for all $i=1,\dots,n,$ and $\sm\in\{\pm\}$
	\begin{equation*}
		\|u_{\eps,i}^{\sm}\|_{L^\infty(0,T;L^\infty(\Omega_{\sm}))} + \|u_{\eps,i}^{\sm}\|_{L^\infty(0,T;L^\infty(\Gamma))} \le C(T, \eps).
	\end{equation*}
\end{proposition}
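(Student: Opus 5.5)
The plan is to obtain the solution by a fixed-point argument for a truncated problem, and then to remove the truncation using non-negativity and an $L^\infty$ bound. The starting observation is that the regularised rates $f_{\ve,i}$ and $r^\sm_{\ve,i}$ are bounded by $1/\ve$ and locally Lipschitz. They also inherit quasi-positivity from $f_i$ and $r^\sm_i$, since dividing by a positive factor does not change the sign. To make them globally Lipschitz, I first replace the arguments by $s\mapsto \min\{s_+,K\}$ componentwise, for a large cut-off $K$.

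\textbf{Step 1: solve the truncated problem.} For the truncated problem I apply Banach's fixed-point theorem on $L^2(0,T_0;L^2(\Om_\sm)\times L^2(\Gamma))$ with $T_0$ small. Given $\bfv$, the problem with right-hand sides $f_{\ve,i}(\bfv)$, $r_{\ve,i}^\sm(\bfv)$ is linear and decoupled. It is uniquely solvable by Lions' theorem in $W(0,T)=\{u\in L^2(H^1),\ \pa_tu\in L^2((H^1)^*)\}$, using the coercivity from \ref{it:HP.diff} and the trace estimate $\|u\|_{L^2(\Gamma)}^2\le\delta\|\nabla u\|^2+C_\delta\|u\|^2$. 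The interpolated trace estimate also shows that the solution map is a contraction for small $T_0$. Because the truncated rates are bounded, $T_0$ is independent of the data, so iterating gives a global solution. Uniqueness follows from the same Gronwall estimate.

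\textbf{Step 2: non-negativity.} I test with $\psi=-(u^\sm_{\ve,i})_-$. Quasi-positivity gives $f_{\ve,i}(\cdot)\,(u_i)_-\ge0$ on $\{u_i<0\}$, because there the truncated argument has $i$-th component $0$. Similarly $r^\sm_{\ve,i}\,(u_i)_-\le0$ there. Hence $\frac{\dd}{\dd t}\|(u_{\ve,i}^\sm)_-\|^2\le0$, and since $u_{0,\ve}\ge0$ we get $u_\ve\ge0$.

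\textbf{Step 3: $L^\infty$ bound (main obstacle).} The rates are bounded by $1/\ve$, so each component satisfies a scalar linear problem with bounded bulk source and bounded Neumann flux. The initial data satisfy $u_{0,\ve}\le\ve^{-1}$. I then use a De Giorgi/Moser iteration, or Stampacchia truncation $(u-k)_+$ with $k$ increasing in time. This gives $\|u^\sm_{\ve,i}\|_{L^\infty((0,T)\times\Om_\sm)}\le C(T,\ve)$ on Lipschitz domains with merely bounded measurable $A_i$; the boundary term is controlled via the trace inequality. The difficulty is handling the boundary flux in the iteration, which I would do with the standard estimate $\int_\Gamma (u-k)_+\lesssim \|(u-k)_+\|_{W^{1,1}}$ and the usual level-set measure decay.

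\textbf{Step 4: remove the cut-off and bound the traces.} Choosing $K> C(T,\ve)$ removes the cut-off, so the solution solves \eqref{approx_sys} on $(0,T)$, and uniqueness is inherited. For the trace bound, note that $(u-k)_+\in L^2(H^1)$ vanishes for $k\ge C(T,\ve)$, so its trace vanishes too. This gives the $L^\infty(\Gamma)$ bound.
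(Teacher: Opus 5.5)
The paper does not prove this proposition: it is stated and the proof is omitted as ``well known''. There is therefore no argument of the authors to compare with. Your sketch is a correct version of the standard route: cut off the rates, apply Banach's fixed point theorem to the globally Lipschitz problem, test with $-(u^\sm_{\ve,i})_-$ for non-negativity, prove a De Giorgi-type $L^\infty$ bound, and then remove the cut-off.

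The step you flag as the main obstacle goes through as you suggest. Take $k\ge\ve^{-1}$, test with $(u-k)_+$, and use the $W^{1,1}(\Om_\sm)\to L^1(\Gamma)$ trace on Lipschitz domains. This gives
$$\ve^{-1}\int_\Gamma(u-k)_+\,\dd\mathcal{H}^{d-1}\lesssim\ve^{-1}\,\mathcal{L}^d(\{u(t)>k\})^{1/2}\big(\|(u-k)_+\|_{L^2}+\|\nabla(u-k)_+\|_{L^2}\big).$$
After absorbing the gradient and applying Gronwall, you obtain
$$\sup_t\|(u-k)_+\|_{L^2}^2+\|\nabla(u-k)_+\|^2_{L^2((0,T)\times\Om_\sm)}\lesssim_{T,\ve}\mathcal{L}^{1+d}(\{u>k\}).$$
This is exactly the input for the De Giorgi iteration, since the parabolic embedding supplies the superlinear power $(d+2)/d$ of the level-set measure.

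Two points should be made explicit.

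First, the bound of Step~3 must be independent of the cut-off level $K$. It is, because $|f_{\ve,i}|,|r^\sm_{\ve,i}|\le\ve^{-1}$ regardless of the cut-off and $u_{0,\ve}\le\ve^{-1}$. Relatedly, in Step~1 it is the global Lipschitz constant of the cut-off rates that makes $T_0$ independent of the data, not their boundedness.

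Second, ``uniqueness is inherited'' needs one more observation. Any non-negative weak solution $\bar\bfu$ of the uncut system \eqref{approx_sys} also has rates bounded by $\ve^{-1}$. So Step~3 applies to it verbatim and gives the same bound $C(T,\ve)$ in the bulk, and on $\Gamma$ because the trace of $(\bar u-k)_+$ is $(\bar u|_{\Gamma}-k)_+$. Hence for $K>C(T,\ve)$, $\bar\bfu$ solves the cut-off problem and coincides with your solution by the uniqueness from Step~1. The same uniqueness makes the solutions obtained on different horizons $T$ (with $K=K(T)$) consistent, which yields a global solution.
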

We omit the proof of this well-known result.

\begin{lemma}\label{lem4.2}
	Let $\ve\in(0,1]$ and let $(u_{\eps,i}^{\sm})$ be the solution of \eqref{approx_sys}. Then, for all $t>0\ge 0$,
	\begin{align}\label{eq:edi.approx}
	H(u_\ve(t))\nonumber
		&+ 4\sum_{\sm\in\{\pm\}}\sum_{i=1}^n\int_{0}^{t}\intOsm \nabla \sqrt{u_{\eps,i}^{\sm}}\cdot A_i\nabla \sqrt{u_{\eps,i}^{\sm}}\dd x\dd s
        -\sum_{\sm\in\{\pm\}}\int_{0}^{t}\intOsm \sum_{i=1}^nf_{\eps,i}(u_\eps^\sm)\log u_{\eps,i}^\sm \dxds
        \\&+\int_{0}^{t}\int_{\Gamma}\sum_{\sm\in\{\pm\}}\sum_{i=1}^n r_{\eps,i}^\sm(\bfu_\eps)\log{u_{\eps,i}^\sm}\dSdt\le H(u_{\ve,0})\le H(u_{0}),
	\end{align}
	where $u_\ve$ is obtained from $\bfu_\ve=(u_\ve^+,u_\ve^-)$ as in~\eqref{eq:def.ui}.
\end{lemma}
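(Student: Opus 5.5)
The plan is to test the weak formulation~\eqref{eq:u-eps.weak} with $\psi=\log u_{\ve,i}^\sm$ and sum over $i$ and $\sm$. Since $u_\ve$ may vanish, I would not do this directly but use the shifted test function $\psi_\delta=\log(u_{\ve,i}^\sm+\delta)$ for $\delta>0$ and let $\delta\to0$ at the end. Because $u_{\ve,i}^\sm\in L^2(0,T;H^1(\Om_\sm))\cap L^\infty$ by Proposition~\ref{prop:ex.approx}, the function $\psi_\delta$ belongs to $L^2(0,T;H^1(\Om_\sm))$ with $\nabla\psi_\delta=\nabla u_{\ve,i}^\sm/(u_{\ve,i}^\sm+\delta)$. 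Its trace on $\Gamma$ equals $\log(\tr u+\delta)$, which is bounded.

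For the time-derivative term I would use the standard chain rule for $u\in L^2(H^1)$ with $\pa_tu\in L^2((H^1)^*)$, applied to the convex function $\mathfrak B_\delta(r)=(r+\delta)\log(r+\delta)-(r+\delta)$, whose derivative is Lipschitz on the bounded range of $u_\ve$. This gives $\int_0^t\la\pa_tu,\log(u+\delta)\ra=\int_{\Om_\sm}\mathfrak B_\delta(u(t))-\mathfrak B_\delta(u(0))$. The diffusion term equals $\int\!\!\int \nabla u\cdot A_i\nabla u/(u+\delta)$, and this is at least $4\int\!\!\int\nabla\sqrt{u+\delta}\cdot A_i\nabla\sqrt{u+\delta}$, in fact with equality. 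The reaction and interface terms become $\int\!\!\int f_{\ve,i}\log(u+\delta)$ and $-\int\!\!\int_\Gamma r_{\ve,i}^\sm\log(\tr u+\delta)$. Collecting these yields the $\delta$-version of~\eqref{eq:edi.approx}, with equality.

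Passing to $\delta\to0$ is the main obstacle, because $\log(u+\delta)$ is unbounded below near zero. The bulk term $\mathfrak B_\delta(u)$ is harmless: it converges by dominated convergence since $u$ is bounded. The diffusion term converges by monotone convergence, since $\nabla\sqrt{u+\delta}=\nabla u/(2\sqrt{u+\delta})$ increases as $\delta\downarrow0$ to $\nabla u/(2\sqrt u)$ on $\{u>0\}$. This uses $\nabla u=0$ a.e.\ on $\{u=0\}$. For the reaction term I would use quasi-positivity. Where $u_i$ is small, $f_{\ve,i}(u)\ge -C|u_i|$ by local Lipschitz continuity together with $f_i\ge0$ at $u_i=0$. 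Hence $f_{\ve,i}\log(u_i+\delta)$ is bounded above uniformly in $\delta$ on the bounded range. Its negative part, namely $-f_{\ve,i}\log(u+\delta)$ with $f\ge0$, is exactly what we want to be large. So Fatou's lemma applied to $-f\log(u+\delta)$ yields the correct direction of the inequality.

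The interface term is handled identically. Quasi-positivity of $r_i^\sm$ gives $r_{\ve,i}^\sm\le C\,\tr u_i^\sm$ near zero, so $r_{\ve,i}^\sm\log(\tr u+\delta)$ is bounded below and Fatou applies. In both the reaction and the interface terms, the entropy-dissipation hypotheses~\eqref{eq:f.diss},~\eqref{eq:R.diss} are not even needed for this inequality; they only matter for the sign when we later use the estimate. This step is where care is required: on $\{u_i=0\}$ the limiting integrand must be interpreted as $0\cdot\log 0$ or as $+\infty$ with the right sign. The final inequality $H(u_{\ve,0})\le H(u_0)$ holds because the truncation $\min\{u_0,\ve^{-1}\}$ only changes values where $u_0>\ve^{-1}\ge1$, and on $[1,\infty)$ the function $\mathfrak B$ is increasing.
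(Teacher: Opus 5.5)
Your proposal is correct and follows the paper's proof essentially step by step. You test~\eqref{eq:u-eps.weak} with $\log(u_{\ve,i}^\sm+\delta)$, use the chain rule for the time derivative to get the $\delta$-level identity, let $\delta\downarrow0$ with Fatou's lemma for the reaction and interface terms, and conclude $H(u_{\ve,0})\le H(u_0)$ from the monotonicity of $\mathfrak B$ on $[1,\infty)$. Your explicit one-sided bounds from quasi-positivity plus local Lipschitz continuity, and your monotone-convergence treatment of the diffusion term, spell out what the paper compresses into ``by virtue of the local Lipschitz continuity of $f_i$ and $r_i$''.
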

\begin{proof}
	For $\delta>0$, 
    we have $\psi = \log(u_{\eps,i}^\sm + \delta) \in L^2(0,T;H^1(\Osm))$ thanks to $u_{\eps,i}^\sm\ge 0$ and $|\nabla\psi| \le \delta^{-1}|\nabla u_{\eps,i}^\sm|$. Therefore, by taking $\psi$ as a test function in the weak formulation~\eqref{eq:u-eps.weak} of the approximating problem, summing up the resulting identities, and observing that \[\sum_{\sm\in\{\pm\}}	\int_{0}^{t}\la \partial_t u_{\eps,i}^{\sm}, \log(u_{\eps,i}^\sm + \delta)\ra_{(H^1(\Omega_\sm))^*,H^1(\Omega_\sm)}=H(u_\ve(t)+\delta)-H(u_\ve(0)+\delta),\] where $(u_\ve(t)+\delta)_i:=u_{\ve,i}(t)+\delta$,
    we obtain
	\begin{align*}
		H(u_\ve(t)+\delta)
		&+ \sum_{\sm\in \{\pm\}}\sum_{i=1}^n\int_{0}^{t}\int_{\Omega_\sm}\nabla u_{\eps,i}^{\sm}\cdot A_i\frac{\nabla u_{\eps,i}^{\sm}}{u_{\eps,i}^{\sm}{+\delta}}\dxds
		\\&= 
		H(u_\ve(0)+\delta)
		+ \sum_{\sm \in \{\pm\}}\sum_{i=1}^n \int_{0}^{t}\int_{\Omega_{\sm}}f_{\eps,i}(u_\eps^\sm) \log (u_{\eps,i}^\sm + \delta) \dxds \\&\qquad\qquad
		- \sum_{i=1}^n\int_{0}^{t}\int_{\Gamma}(r_{\eps,i}^+(\bfu_\eps) - r_{\eps,i}^{-}(\bfu_\eps))\log\frac{u_{\eps,i}^+ + \delta}{u_{\eps,i}^- + \delta}\dSds.
	\end{align*}
	We now take the limit $\delta\downarrow 0$,  using the uniform bounds of $u_{\eps,i}^\sm$ in $L^\infty((0,T)\times\Osm)$ and in $L^\infty((0,T)\times\Gamma)$ for any $T>0$, 
	and observing that the reaction and interface terms can be handled with Fatou's lemma 
	by virtue of the local Lipschitz continuity of $f_i$ and $r_i$,
 to arrive at 
	\begin{align*}
			H(u_\ve(t))
		+ \sum_{\sm\in \{\pm\}}\sum_{i=1}^n\int_{0}^{t}\int_{\Omega_\sm}\nabla u_{\eps,i}^{\sm}\cdot A_i\frac{\nabla u_{\eps,i}^{\sm}}{u_{\eps,i}^{\sm}}\dd x \dd s
         -\sum_{\sm \in \{\pm\}}\int_{0}^{t}\int_{\Omega_{\sm}}\sum_{i=1}^n f_{\eps,i}(u_\eps^\sm) \log u_{\eps,i}^\sm \dxds&
         \\ + \int_{0}^{t}\int_{\Gamma}\sum_{i=1}^n(r_{\eps,i}^+(\bfu_\eps) - r_{\eps,i}^{-}(\bfu_\eps))\log\frac{u_{\eps,i}^+}{u_{\eps,i}^-}\dSds
	\le 	H(u_\ve(0))&=H(u_{\ve,0}).
    \end{align*}
	The inequality $H(u_{\ve,0})\le H(u_{0})$ for $\ve\le 1$ follows from the fact that $s\mapsto \mathfrak{B}(s)$ is increasing for $s\ge1$.
\end{proof}

\subsection{Renormalised formulation}
To obtain renormalised solutions, we adapt the construction in \cite{Fischer_2015}. Let $\omega: \mathbb R \to [0,1]$ be a $C^{\infty}$ function such that $\omega|_{(-\infty,0)} = 1$ and $\omega|_{(1,\infty)} = 0$. 
For $E\ge2$ and $j\in \{1,\ldots, 2n\}$, we define 
$\xi_j^E: [0,\infty)^{2n}\to [0,\infty)$ via
\begin{equation*}
	\xi_j^E(\bfu) = (u_j - 3E)\omega\bra{\frac 1E \sum_{i=1}^{2n}u_i - 1} + 3E,\qquad\bfu=(u_1,\dots,u_{2n}).
\end{equation*} 
The function $\xi_j^{E}$ is essentially a smooth approximation of the projection $[0,\infty)^{2n} \ni \bfu \mapsto u_j$ onto the $j$-th coordinate. We have the following properties of $\xi_j^E$, where constants hidden in $\lesssim$ are independent of $E$: 
\begin{itemize}
	\item[(V1)] $\xi_j^E\in C^2([0,\infty)^{2n})$.
	\item[(V2)] For every $j=1,\ldots, 2n$, it holds that 
	\begin{equation*}
		\max_{1\le i,k\le 2n}\sqrt{u_i}\sqrt{u_k}|D_iD_k \xi_j^E(\bfu)| \lesssim 1. 
	\end{equation*}
	\item[(V3)] $\text{supp}(D\xi_j^E)$ is bounded for every fixed $E>0$ and every $j=1,\ldots, 2n$.
	\item[(V4)] $\lim_{E\to \infty}D_i\xi_j^E(\bfu) = \delta_{ji}$ for all $i,j = 1,\ldots, 2n$ and all $\bfu\in [0,\infty)^{2n}$.
	\item[(V5)] It holds that 
	\begin{equation*}
		\max_{i,j}\sup_{\bfu\in [0,\infty)^{2n}}|D_i\xi_j^E(\bfu)| \lesssim1.
	\end{equation*}
	\item[(V6)] If $\bfu\in[0,\infty)^{2n}$ and $\sum_{j=1}^{2n}u_j\le E$, then $\xi_j^E(\bfu)= u_j$.
	\item[(V7)] For every $K>0$, and every $i,j,k\in \{1,\ldots, 2n\}$ it holds
	\begin{equation*}
		\lim_{E\to \infty}\sup_{|\bfu|_1 \le K}|D_iD_k\xi_j^E(\bfu)| = 0.
	\end{equation*}
	\item[(V8)] If $\bfu \in [0,\infty)^{2n}$ and $\sum_{j=1}^{2n}\xi_j^{E}(\bfu) \le E$, then $\xi_i^E(\bfu) = u_i$.
	\item[(V9)]	If $\bfu \in [0,\infty)^{2n}$ and $\sum_{j=1}^{2n}u_j \le E$, then $\xi_i^E(\bfu) = u_i$ for all $i=1,\ldots, 2n.$
\end{itemize}

\begin{lemma}[Compactness and limit candidate]\label{l:compactness}
	There exists $\bfu=(u_i^{\sm})$ with
	\begin{align*}
		u_i^{\sm} \in L^{\infty}_{\rm{loc}}([0,\infty);(L\log L)(\Omega_\sm)) \quad \text{ and } \quad \sqrt{u_i^{\sm}} \in L^2_{\rm{loc}}([0,\infty); H^1(\Omega_\sm))
	\end{align*}
	such that, after passage to a subsequence,  
	\begin{align*}
		&u_{\eps,i}^{\sm} \to u_i^{\sm} \quad \mathcal{L}^{1+d}\text{-a.e. in } (0,\infty)\times \Omega_\sm,
		\\&\tr_{\Gamma_\sm} u_{\eps,i}^{\sm} \to \tr_{\Gamma_\sm} u_i^{\sm}\quad \mathcal{L}^1\otimes\mathcal{H}^{d-1}\text{-a.e. in } (0,\infty)\times \Gamma,
		\\&\sqrt{u_{i\ve}^\sm}\rightharpoonup\sqrt{u^\sm_i}\quad\text{in }L^2(0,T;H^1(\Om_\sm)),\quad\text{ for all }T>0.
	\end{align*}
\end{lemma}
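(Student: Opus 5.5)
We follow the standard compactness route of~\cite{Fischer_2015}. The only inputs are the uniform entropy bound of Lemma~\ref{lem4.2} and the fact that every sequence stays componentwise non-negative.

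\textbf{Uniform bounds.} Since the dissipation terms in~\eqref{eq:edi.approx} are non-negative (they are nonnegative by~\eqref{eq:f.diss}, \eqref{eq:R.diss} and the sign-preserving regularisation), for every $T>0$ we get:
\begin{itemize}
\item $\sup_{t\le T}\int_{\Om_\sm}\mathfrak{B}(u^\sm_{\ve,i})\,\dd x\le C$ uniformly in $\ve$. This gives a uniform $L^\infty(0,T;L\log L)$ bound and uniform $L^1$-equiintegrability of $u^\sm_{\ve,i}$.
\item $\|\nabla\sqrt{u^\sm_{\ve,i}}\|_{L^2((0,T)\times\Om_\sm)}\le C$ by~\ref{it:HP.diff}.
\end{itemize}
Together with $\|\sqrt{u^\sm_{\ve,i}}\|_{L^\infty L^2}\le C$, the sequence $\sqrt{u^\sm_{\ve,i}}$ is bounded in $L^2(0,T;H^1(\Om_\sm))$. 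Weak compactness and a diagonal argument in $T$ then give the weak limit $w_i^\sm$. It remains to identify $w_i^\sm=\sqrt{u_i^\sm}$ and to obtain a.e.\ convergence. Once that is done, the $L\log L$ regularity of the limit follows from Fatou's lemma.

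\textbf{Time compactness (main obstacle).} The difficulty is that $\pa_tu_\ve$ is not uniformly bounded in any dual space, because the reaction and interface terms are not uniformly integrable. We therefore apply the renormalisation trick of Fischer instead. Take $\zeta=\xi^E$-type truncations acting on $u^\sm_\ve$ alone, e.g.\ $\zeta^E_j(u)=(u_j-3E)\omega(|u|_1/E-1)+3E$ as in the construction above (with $n$ components). Test~\eqref{eq:u-eps.weak} with $D_j\zeta^E_j(u_\ve)\psi$ for $\psi\in W^{1,\infty}_0$-type test functions. This yields
\[
\|\pa_t\zeta^E_j(u^\sm_\ve)\|_{L^1(0,T;(W^{m,\infty}(\Om_\sm))^*)}\le C(E).
\]
The justification is as follows:
\begin{itemize}
\item $D\zeta^E$ has compact support, so $f_{\ve}$ and $r_\ve$ evaluated where $D\zeta^E\neq0$ are bounded independently of $\ve$, using local Lipschitz continuity and $|f_\ve|\le|f|$.
\item For the interface term, one uses the uniform trace bound on $\sqrt{u_\ve}$ in $L^2(0,T;L^2(\Gamma))$; in fact only boundedness of $D\zeta^E$ together with $|r_\ve(\bfu_\ve)|$ restricted to bounded arguments is needed. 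Here one would rather use a truncation depending on both traces, but since $r_\ve$ is bounded by $\ve^{-1}$ only, I would instead bound $D_j\zeta^E(u^\sm_\ve)\,r^\sm_{\ve,j}(\bfu_\ve)$ in $L^1(\Gamma)$ via~\eqref{eq:edi.approx}. The interface dissipation is nonnegative and dominates $|r|$ wherever $|\log(u^+/u^-)|$ is large, by the usual $(a-b)\log(a/b)$ argument, and where it is small, $u^{-\sm}$ is comparable to $u^\sm\le 3E$, so $r$ is bounded. This step is the delicate one.
\item The gradient term is bounded using property (V2), which gives $|\nabla D\zeta^E(u)\cdot\nabla u|\lesssim\sum|\nabla\sqrt{u_i}|^2$, together with $|D\zeta^E\,\nabla u_i|\lesssim\sqrt{E}\,|\nabla\sqrt{u_i}|$ on the support.
\end{itemize}
Meanwhile $\nabla\zeta^E_j(u_\ve)$ is bounded in $L^2$, and Aubin--Lions (in the version of Simon/Dubinskii) gives strong $L^1_{t,x}$ compactness of $\zeta^E_j(u^\sm_\ve)$ for each fixed $E$.

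\textbf{Passing $E\to\infty$ and identifying the limit.} Since $|\zeta^E_j(u)-u_j|\le C|u|_1\chi_{\{|u|_1>E\}}$ and the uniform $L\log L$ bound gives equiintegrability, we have $\sup_\ve\|\zeta^E_j(u^\sm_\ve)-u^\sm_{\ve,j}\|_{L^1}\to0$ as $E\to\infty$. Hence $u^\sm_{\ve,j}$ is Cauchy in $L^1((0,T)\times\Om_\sm)$. Extracting a diagonal subsequence then yields a.e.\ convergence to some $u^\sm_j$. It follows that $\sqrt{u_\ve}\to\sqrt{u}$ a.e. By Vitali's theorem, using the bound on $\sqrt{u_\ve}$ in $L^\infty L^2$ and the $L^{2+}$-type integrability from Sobolev embedding, the convergence holds strongly in $L^2$, so $w=\sqrt u$ and the weak $H^1$ convergence holds.

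\textbf{Traces.} Strong $L^2$ convergence of $\sqrt{u_\ve}$ combined with the $L^2H^1$ bound gives, by the interpolation trace inequality used in Lemma~\ref{l:interpol-trace.g},
\[
\|\tr(\sqrt{u_\ve}-\sqrt u)\|_{L^2(\Gamma)}^2\le\delta\|\nabla(\cdot)\|^2_{L^2}+C_\delta\|\cdot\|^2_{L^2}.
\]
Hence the traces of $\sqrt{u_\ve}$ converge in $L^2((0,T)\times\Gamma)$. A further subsequence converges $\mathcal{L}^1\otimes\mathcal{H}^{d-1}$-a.e., and squaring gives the claimed a.e.\ convergence of $\tr_{\Gamma_\sm}u^\sm_{\ve,i}$.
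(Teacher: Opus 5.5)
The step you yourself flag as delicate fails as written, and it is the only real problem. With a truncation $\zeta^E(u^\sm_\eps)$ acting on $u^\sm$ alone, the boundary term $\int_\Gamma\psi\,D_k\zeta^E_j(u^\sm_\eps)\,r^\sm_{\eps,k}(\bfu_\eps)$ is restricted only to $\{|\tr u^\sm_\eps|_1\le 2E\}$. The trace $\tr u^{-\sm}_\eps$ remains unconstrained, and under \ref{it:HP.int} the entropy dissipation does not make up for this.

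By \eqref{eq:mass.pres}, the interface dissipation is the single scalar $\sum_i r_i^+(\bfu)(\log u_i^+-\log u_i^-)$, whose summands need not be non-negative. So for $n\ge2$ it neither dominates $|r_i|$ where individual ratios $u_i^+/u_i^-$ are extreme, nor forces $u^{-\sm}\approx u^\sm$ where it is small.

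Concretely, take \eqref{eq:example.r} with $n=2$, $\gamma=(1,0)$, $\delta=(0,3)$. Then $r_1=-\tfrac13 r_2=k_\Gamma(a-b)$ with $a=u_1^+(u_2^-)^3$ and $b=(u_2^+)^3u_1^-$, and the dissipation equals $k_\Gamma(a-b)\log(a/b)$. Fix $u^+$, set $u_2^-=M$, and choose $u_1^-$ so that $a/b=1+M^{-3/2}$. This gives $|r|\sim M^{3/2}$, while the dissipation stays of order one, even though $|\log(u_i^+/u_i^-)|\sim\log M$ for both $i$. Hence there is no bound $|r|\lesssim_E 1+(\text{dissipation})$ on $\supp D\zeta^E(u^\sm)$. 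An integrated bound would need integrability of high powers of $\tr u^{-\sm}_\eps$ on $\Gamma$, which \eqref{eq:edi.approx} does not give. The same happens for \eqref{eq:transmission.nonlin.ki} when $k_1$ grows in $u_2^-$.

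This is exactly the obstruction described in the paper's introduction. The paper overcomes it by truncating jointly in $(u_\eps,\tilde u^\beta_\eps)$ on the reflected neighbourhoods $V_\beta$. There the $\Gamma_\sm$-trace of $\tilde u^\beta_\eps$ is the $\Gamma$-trace of $u^{-\sm}_\eps$, so on $\supp D\xi^E_i$ both traces, and hence $r^\pm$, are bounded. The paper then shows that the limits do not depend on $\beta$, and handles the region away from $\Gamma$ as in Fischer.

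For this particular lemma, however, you never need a test function that is nonzero on $\Gamma$, and that is how to repair your argument. Test only with $\psi\in C^\infty_c((0,T)\times\Om_\sm)$, which is presumably what your phrase ``$W^{1,\infty}_0$-type'' should mean. The interface integral then vanishes identically, and your bulk estimates give $\pa_t\zeta^E_j(u^\sm_\eps)$ bounded in $L^1(0,T;H^{-m}(\Om_\sm))$ with $H^{-m}=(H^m_0)^*$ and $m$ large, rather than in $(W^{m,\infty}(\Om_\sm))^*$. Since $H^1(\Om_\sm)\Subset L^2(\Om_\sm)\hookrightarrow H^{-m}(\Om_\sm)$ is injective and continuous, Aubin--Lions--Simon still gives compactness of $\zeta^E_j(u^\sm_\eps)$ in $L^2((0,T)\times\Om_\sm)$.

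The rest of your argument is then correct:
\begin{itemize}
\item total boundedness in $L^1$, from $\sup_\eps\|\zeta^E_j(u_\eps)-u_{\eps,j}\|_{L^1}\to0$;
\item strong $L^2$ convergence of $\sqrt{u_\eps}$, even more simply via $|\sqrt a-\sqrt b|^2\le|a-b|$;
\item trace interpolation for the $\Gamma$-traces.
\end{itemize}
It is also somewhat more direct than the paper's Steps 2--3. The paper uses the reflected truncations already here because the same $\eps$-level identity \eqref{b6}, including its interface terms, is what gets passed to the limit in Lemma~\ref{l:equation.defect-meas}.
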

\begin{proof}
		The proof  extends~\cite[Lemma~2]{Fischer_2015}. Below, we provide the arguments that need to be adapted due to the interface conditions. The main difference is that the truncation device is only defined 
	 locally in space on $V_\beta\cap\ol\Om_\sigma$. Hence, we will first establish local compactness. 
	The subsequential limits will be shown to be independent of $\beta$, and combined with the uniform bounds from the entropy estimate~\eqref{eq:edi.approx} at the approximate level, we will subsequently infer the asserted convergence results  (Steps 2, 3).
	\smallskip 
	
	\noindent{\em Step~1: Compactness of the family $\{\xi_i^E(\wt\bfu_\eps^\beta)\}_\ve$ for $E<\infty$.}
	
	We first establish compactness locally in $V_\beta\cap\ol\Om_\sigma$ for each  $\beta\in\frakP$ with the finite set of points $\frakP\subset\Gamma$ as specified in Section~\ref{ssec:adjrelen.coer}. To this end, we define $w_{\eps,i}^{E,\beta}:=\xi_i^E(\wt\bfu_\eps^\beta)$ with $\wt\bfu_\eps^\beta  = (u_\ve,\tilde u_\ve^\beta)$, $i=1,\ldots, n$ and let $\hat\vp_\beta\in C^\infty(\ol\Om_\sm)$  with $\supp\hat\vp_\beta\subset V_\beta\cap\ol\Om_\sm$.
	 From the properties of the truncation $\xi_i^E$ and the $\ve$-uniform bounds of $u_\eps$ in $L^{\infty}(0,T;L^1(\Osm))$ and of $\sqrt{u_\eps}$ in $L^2(0,T;H^1(\Osm))$ in Lemma \ref{lem4.2}, we see that $\{\hat\vp_\beta w_{\eps,i}^{E,\beta}\}$ is $\ve$-uniformly bounded in $L^2(0,T;H^1(\Om_\sm))$. Indeed, this follows from
	\begin{align*}
		\nabla (\hat\vp_\beta w_{\eps,i}^{E,\beta}) &= w_{\eps,i}^{E,\beta}\nabla\hat\vp_\beta + \sum_{j=1}^{n}\bra{D_j \xi_i^E(u_\eps,\tilde u_\eps^\beta)\nabla u_{\eps,j} + D_{j+n}\xi_i^E(u_\eps,\tilde u_\eps^\beta)\nabla\tilde{u}_{\eps,j}^{\beta}}\hat\vp_\beta\\
		&= w_{\eps,i}^{E,\beta}\nabla\hat\vp_\beta + 2\sum_{j=1}^{n}\bra{D_j \xi_i^E(u_\eps,\tilde u_\eps^\beta)\sqrt{u_{\eps,j}}\nabla \sqrt{u_{\eps,j}} + D_{j+n}\xi_i^E(u_\eps,\tilde u_\eps^\beta)\sqrt{\tilde{u}_{\eps,j}^{\beta}}\nabla\sqrt{\tilde{u}_{\eps,j}^{\beta}}}\hat\vp_\beta.
	\end{align*}

	To obtain temporal control, we consider  the distributional time derivative of $\hat\vp_\beta w_{\eps,i}^{E,\beta}$, which for $\psi\in C^{\infty}_c([0,\infty)\times \ol\Om_\sm)$, $T\in(0,\infty)$, and $t_1,t_2\in [0,T]$ is given by
	\begin{equation}\label{b6}
		\begin{aligned}
			&\intOsm \hat\vp_\beta w_{\eps,i}^{E,\beta}(t_2)\psi(t_2) \dd x - \intOsm \hat\vp_\beta w_{\eps,i}^{E,\beta}(t_1)\psi(t_1) \dd x - \int_{t_1}^{t_2}\intOsm \hat\vp_\beta w_{\eps,i}^{E,\beta}\partial_t\psi \dxdt\\
			&= \sum_{j=1}^{n}\int_{t_1}^{t_2}\int_{\Osm}\hat\vp_\beta \psi D_j\xi_i^E(u_\eps,\tilde u_\eps^\beta) \partial_t u_{\eps,j}^{\sm} \dxdt + \sum_{j=1}^{n}\int_{t_1}^{t_2}\int_{\Osm}\hat\vp_\beta \psi D_{j+n}\xi_i^E(u_\eps, \tilde u_\eps^\beta) \partial_t \tilde{u}_{\eps,j}^{\beta} \dxdt\\
			&=: (I) + (II).
		\end{aligned}
	\end{equation}
Using the equations~\eqref{eq:u-eps.weak}, the 
properties of $\xi_i^E$, and the approximate entropy estimate~\eqref{eq:edi.approx} it is not difficult to show that, for all $\eps\in(0,1]$, \[|(I) + (II)|\lesssim_{E}\|\psi\|_{L^\infty(0,T;W^{1,\infty}(\Om_\sm))},\]
where here and below the dependence of the estimate on $H(u_0),T$ will not be indicated.
For completeness, we provide the details. We estimate the terms $(I)$ and $(II)$ separately. For $(I)$ we use the equation~\eqref{eq:u-eps.weak} for $u_{\eps,j}^{\sm}$ to rewrite,  abbreviating $\xi_i^E(\cdot):= \xi_i^E(\wt\bfu_\eps^\beta)$,
	\begin{equation}\label{b7}
		\begin{aligned}
			(I) &= - \sum_{j=1}^{n}\int_{t_1}^{t_2}\int_{\Osm}A_j  \nabla u_{\eps,j}^\sm\cdot\Big[\hat \vp_\beta \psi\sum_{q=1}^n (D_{q}D_{j}\xi_i^E(\cdot) \nabla u_{\eps,q}^\sm + D_{q+n}D_j\xi_i^E(\cdot) \nabla \tilde{u}_{\eps,q}^{\beta})\\
			&\hspace{2.5in} + \psi D_{j}\xi_i^E(\cdot) \nabla \hat\vp_\beta + \hat\vp_\beta D_{j}\xi_i^E(\cdot) \nabla \psi \Big]\dxdt\\
			&\quad - \sum_{j=1}^n\int_{t_1}^{t_2}\int_{\Gamma_{\sm}}\hat\vp_\beta \psi D_j  \xi_i^E(\cdot) r_i^{\sm}(\bfu_\eps) \dSdt\\
			&\quad + \sum_{j=1}^{n}\int_{t_1}^{t_2}\int_{\Osm}\hat{\vp}_\beta\psi D_j \xi_i^E(\cdot)f_j(u_\eps^\sm)\dxdt =: (I1) + (I2) + (I3) + (I4) + (I5) + (I6).
		\end{aligned}
	\end{equation}
	Using property (V2) of $\xi_i^E$ and Lemma \ref{lem4.2}, we estimate
	\begin{align*}
		|(I1)| &\lesssim \sum_{j,q=1}^{n} \int_{t_1}^{t_2}\intOsm |A_j|| \hat \vp_\beta| |\psi| \Big|D_qD_j\xi_i^E(\cdot)\sqrt{u_{\eps,j}^\sm}\sqrt{u_{\eps,q}^\sm}\Big|\big|\nabla \sqrt{u_{\eps,j}^\sm}\big|\big|\nabla \sqrt{u_{\eps,q}^\sm}\big| \dxdt\\
		&\lesssim \sum_{j=1,q}^n\|A_j\|_{L^\infty} \|\hat \vp_\beta\|_{L^\infty}\left\|\nabla \sqrt{u_{\eps,j}^\sm}\right\|_{L^2((0,T)\times\Osm)}\left\|\nabla \sqrt{u_{\eps,q}^\sm}\right\|_{L^2((0,T)\times\Osm)}\|\psi\|_{L^\infty((0,T)\times\Osm)}\\
		&\lesssim\|\psi\|_{L^\infty((0,T)\times\Osm)}.
	\end{align*}
	Completely analogous estimates yield the bound
	$|(I2)| \lesssim\|\psi\|_{L^\infty((0,T)\times\Osm)}.$
Furthermore, 
	\begin{align}\label{eq:I1I2}
		\begin{aligned}
		|(I3)| &\lesssim
		  \int_{t_1}^{t_2}\intOsm|\psi D_j\xi_i^E(\cdot)|\sqrt{u_{\eps,j}^\sm}\Big|\nabla \sqrt{u_{\eps,j}^\sm}\Big|\dxdt \lesssim \|\psi\|_{L^\infty((0,T)\times\Osm)},
		  \\	|(I4)| &\lesssim 
		  \int_{t_1}^{t_2}\intOsm \sqrt{u_{\eps,j}^\sm}|D_j\xi_i^E(\cdot)| \Big|\nabla \sqrt{u_{\eps,j}^\sm}\Big|\big|\nabla \psi\big| \dxdt \lesssim\|\nabla \psi\|_{L^\infty((0,T)\times\Osm)},
		  \\	|(I5)| &\lesssim 
		  \sum_{j=1}^{n}\int_{t_1}^{t_2}\int_{\Gamma_\sm}
		  |\psi| |D_j\xi_i^E(\cdot)||r_i^\sm(\bfu_\eps)|\dSdt \lesssim_E \|\psi\|_{L^2((0,T);H^1(\Osm))}, 
		  \\	|(I6)| &\lesssim 
		  \sum_{j=1}^n\int_{t_1}^{t_2}\intOsm |\psi||D_j\xi_i^E(\cdot)||f_j(u_\eps^\sm)|\dxdt
		   \lesssim_E \|\psi\|_{L^1((0,T)\times\Osm)}.		   
		   \end{aligned}
	\end{align}
	For $(II)$, we change variables via $\Phi_{\beta,\pm}$, recall that $|\det D\Phi_{\beta,\pm}|=1$, and use the equation for $(u_{\ve,i}^{-\sm})_i$ to infer
	\begin{align*}
		(II) 
		&= -\sum_{j=1}^{n}\int_{t_1}^{t_2}\int_{\Osmm\cap V_\beta}{A}_j\nabla{u}_{\eps,j}\cdot\nabla\big((\hat\vp_\beta\psi D_{j+n}\xi_i^E(\cdot))\circ\Phi_{\beta,-\sm} 
		\big)\dxdt\\
		&\quad - \sum_{j=1}^{n}\int_{t_1}^{t_2}\int_{\Gamma_{-\sm}\cap V_\beta}(\hat\vp_\beta \psi D_{j+n}\xi_i^E(\cdot)){\circ\Phi_{\beta,-\sm} } r_{j}^{-\sm}(\bfu_\eps)\dSdt\\
		&\quad + \sum_{j=1}^{n}\int_{t_1}^{t_2}\int_{\Osmm\cap V_\beta}\hat\vp_\beta\circ\Phi_{\beta,-\sm} \psi\circ\Phi_{\beta,-\sm} D_{j+n}\xi_i^E(\cdot\circ\Phi_{\beta,-\sm})f_j(u_\eps^{-\sm})\dxdt
		\\&=: (II1) + (II2) + (II3).
	\end{align*}
	The estimates of $(II2)$, $(II3)$ are similar to those of $(I5)$, $(I6)$, while $(II1)$ can be handled as $(I1)$--$(I4)$, giving 
	\begin{equation*}
		|(II)| \lesssim_E \|\psi\|_{L^\infty(0,T;W^{1,\infty}(\Osm))}.
	\end{equation*}
	
	The estimates above show that $\{\partial_t (\hat\vp_\beta w_{\eps,i}^{E,\beta})\}_{\eps}$ is bounded in {$L^1(0,T;(W^{1,\infty}(\Omega))^*)$}.  
    Hence, by the Aubin-Lions lemma, the family $\{\hat\vp_\beta w_{\eps,i}^{E,\beta}\}_{\ve}\subset L^2((0,T)\times\Osm)$ is relatively compact.
    Since the function $\hat\vp_\beta\in C^\infty(\overline{\Om}_\sm)$ with $\supp(\hat\vp_\beta) \subset V_\beta \cap \overline{\Om}_\sm$ was arbitrary, there exists $w_{i}^{E,\beta}\in L^2_\loc([0,T]\times(V_\beta \cap \overline{\Om}_\sm))$ and a subsequence $\ve\downarrow0$ such that the local convergence $w_{\eps,i}^{E,\beta}\to w_{i}^{E,\beta}$ in $L^2_\loc([0,T]\times(V_\beta \cap \overline{\Om}_\sm))$ holds true, and thus, after extracting another subsequence, $w_{\eps,i}^{E,\beta}\to w_{i}^{E,\beta}$ pointwise $\mathcal{L}^{1+d}$-a.e.\ in $[0,T]\times(V_\beta \cap \overline{\Om}_\sm)$.
    Furthermore, thanks to the property (V3) of $\xi_i^E$ and the definition $w_{\eps,i}^{E,\beta} = \xi_i^E(\wt\bfu_\eps^\beta)$, we see that $\{w_{\eps,i}^{E,\beta}\}_{\eps}$ is bounded uniformly in $\eps$ in $L^p((0,T)\times (V_\beta\cap\Osm))$ for any $p>2$. 
    Noting also that, by a diagonal sequence argument, the convergent subsequence can be chosen independent of $\sm,\beta,$ and of  $E,T\in\mathbb{N}$,
    we conclude the following convergence results for all $E,T\in\mathbb{N},\beta\in\frakP,\sm\in\{\pm\}$
    	\begin{subequations}\label{eq:wE.L2.strong}
    	\begin{align}\label{eq:wE.L2.strong.or}
    					&w_{\eps,i}^{E,\beta}\to w_{i}^{E,\beta}\quad\text{in }L^2((0,T)\times (V_\beta\cap\Om)),
    		\\&\tilde w_{\eps,i}^{E,\beta}\to \tilde w_{i}^{E,\beta}\quad\text{in }L^2((0,T)\times (V_\beta\cap\Om)), \nonumber
    	\end{align}
    \end{subequations}    
    where $\tilde w_i^{E,\beta}(t,x):= w_i^{E,\beta}(t,\Phi_{\beta}(x))$. 
Upon passage to another subsequence, we also find that
\begin{align*}
&w_{\eps,i}^{E,\beta} \to w_{i}^{E,\beta} \quad \mathcal{L}^{1+d}\text{-a.e. in } \quad (0,\infty)\times (V_\beta\cap\Omega),
\\&\tilde w_{\eps,i}^{E,\beta} \to \tilde w_{i}^{E,\beta} \quad \mathcal{L}^{1+d}\text{-a.e. in } \quad (0,\infty)\times (V_\beta\cap\Omega).
\end{align*}
From the approximate entropy estimate~\eqref{eq:edi.approx} we further infer, using Fatou's lemma,
\begin{align}\label{eq:w.E-unif.int}
\sup_{E\in \mathbb{N}}\|w_{i}^{E,\beta} \log w_{i}^{E,\beta} \|_{L^\infty(0,T; L^1(V_\beta\cap\Om))}\lesssim1.
\end{align}

	\smallskip 

	\noindent{\em Step~2: Limit $E\to\infty$ and candidate $u$.}
	
	We assert that, for all $\beta\in\frakP$,  the pointwise limit $\lim_{E\to\infty}w_i^{E,\beta}$ exists $\mathcal{L}^{1+d}$-a.e.\ in $(0,\infty)\times (V_\beta\cap\Om)$ and analogously for $\beta={\rm out}$ (adopting once more the notations from Section~\ref{ssec:adjrelen.coer}), and that the limiting value is independent of $\beta$, thus defining a measurable function $u_i$ on $(0,\infty)\times\Om$.
	To this end, consider any $(t,x)\in(0,T)\times (V_\beta\cap\Om)$ for which the pointwise limits $\lim_{\ve\downarrow0}w_{\eps,i}^{E,\beta}(t,x)=w_i^{E,\beta}(t,x)$ and $\lim_{\ve\downarrow0}\tilde w_{\eps,i}^{E,\beta}(t,x)=\tilde w_i^{E,\beta}(t,x)$ exist and additionally 
	\begin{align}\label{eq:sum.less.E}
	\sum_{i=1}^n\Big(w_{i}^{E,\beta}(t,x)+\tilde w_{i}^{E,\beta}(t,x)\Big)<E.
	\end{align}
	Then, for $\ve>0$ small enough, since $\xi_{i+n}^{E}(u_\eps,\tilde u_\eps^\beta)=\xi_{i}^{E}(\tilde u_\eps^\beta,u_\eps)=\xi_{i}^{E}(u_\eps,\tilde u_\eps^\beta)\circ\Phi_\beta$ for $i=1,\dots,n$,
	\[
	\sum_{i=1}^{2n}\xi_{i}^{E}(u_\eps,\tilde u_\eps^\beta)(t,x) =w_{\eps,i}^{E,\beta}(t,x)+\tilde w_{\eps,i}^{E,\beta}(t,x)
	\le E.
	\]
	From (V8) and (V9) we then deduce that 
	\[w_{\eps,i}^{E,\beta}(t,x)=\xi_i^E(u_\ve,\tilde u^\beta_\ve)(t,x)=u_{\ve,i}(t,x) = \xi_i^{E'}(u_\eps, \tilde u_\eps^\beta)(t,x),\qquad i=1,\dots,n,\]
	for all $E' > E$, showing that the limit
 $\lim_{E\to\infty}\xi_i^E(u_\eps, \tilde u_\eps^{\beta})(t,x)$ exists and does not depend on $\beta$.
 
 To cope with points, for which~\eqref{eq:sum.less.E} is not satisfied, we rely on the $E$-uniform integrability of $w_{i}^{E,\beta}$ and $\tilde w_{i}^{E,\beta}$, which follows from~\eqref{eq:w.E-unif.int}, to infer
	\[\mathcal{L}^{1+d}\Big(\big\{(t,x)\in(0,T)\times (V_\beta\cap\Om)\mid\sum_{i=1}^n\big(w_{i}^{E,\beta}(t,x)+\tilde w_{i}^{E,\beta}(t,x)\big)\ge E\big\}\Big)\longrightarrow 0\quad\text{ as }E\to\infty.\]
	Thus, $\lim_{E\to\infty}w_i^{E,\beta} = u_i$ holds true $\mathcal{L}^{1+d}$-a.e.\ in $(0,T)\times(V_\beta \cap \Om)$ for some $\mathcal{L}^{1+d}$-measurable $u_i:(0,\infty)\times\Om\to[0,\infty)$. Furthermore, $\lim_{E\to\infty}w_i^{E,\beta}(t,\Phi_{\beta}(x)) = u_i(t,\Phi_{\beta}(x))$ a.e. in $(0,T)\times(V_\beta \cap \Om)$. From~\eqref{eq:w.E-unif.int}
	 and Fatou's lemma, we further infer that $u_i\log u_i \in L^{\infty}(0,T;L^1(V_\beta\cap\Om))$.
	 
	 We henceforth denote by $u_i^\sm$ the restriction of $u_i$ to $(0,\infty)\times\Om_\sm$, $\sm\in\{\pm\}$.
	
		\smallskip 
	 
	 	\noindent {\em Step~3: Convergence of $u_{\ve,i}^\sm$ to $u_i^\sm$.}
	 
	 Let $\delta>0$ be arbitrary and denote by $Q_{T,\beta}^{\sm}:= (0,T)\times (V_\beta\cap\Osm)$. We now estimate 
	\begin{align}
		\mathcal{L}^{1+d}&\bra{\left\{(t,x)\in Q_{T,\beta}^{\sm}: |u_{\eps,i}(t,x) - u_i(t,x)| > \delta\right\}}\nonumber\\
		&\le \mathcal{L}^{1+d}\bra{\left\{(t,x)\in Q_{T,\beta}^{\sm}: u_{\eps,i}(t,x) \ne \xi_i^E(u_\eps, \tilde u_\eps^\beta)(t,x)\right\}}\label{eq:ptw.conv.ineq}\\
		&\qquad+ \mathcal{L}^{1+d}\bra{\left\{(t,x)\in Q_{T,\beta}^{\sm}: |\xi_i^E(u_\eps, \tilde u_\eps^\beta) - w_i^{E,\beta}(t,x)| > \frac{\delta}{2} \right\}}\nonumber\\
		&\qquad+ \mathcal{L}^{1+d}\bra{\left\{(t,x)\in Q_{T,\beta}^{\sm}: |w_i^{E,\beta}(t,x) - u_i(t,x)| > \frac{\delta}{2} \right\}}.\nonumber
	\end{align}
	Using the $\ve$-uniform limit, which follows from the entropy inequality in Lemma~\ref{lem4.2},
	\begin{align*}
		&\lim_{E\to\infty}\mathcal{L}^{1+d}\bra{\left\{(t,x)\in Q_{T,\beta}^{\sm}: u_{\eps,i}(t,x) \ne \xi_i^E(u_\eps, \tilde u_\eps^\beta)(t,x)\right\}}\\
		&\qquad\le \lim_{E\to\infty}\mathcal{L}^{1+d}\bra{\left\{(t,x)\in Q_{T,\beta}^{\sm}: \sum_{j=1}^{n}(u_{\eps,j}(t,x) + \tilde u_{\eps,j}^{\beta}(t,x)) \ge E\right\}} = 0
	\end{align*}
	and the previous pointwise convergence results, we deduce from~\eqref{eq:ptw.conv.ineq} that $\lim_{\eps\downarrow 0}u_{\eps,i} = u_i$ a.e. in $Q_{T,\beta}^{\sm}$.
This is true for all $\beta \in \frakP$. Furthermore, by directly following~\cite{Fischer_2015},
we can obtain an analogous result for
	$\beta = \text{out}$, so that, along an appropriate subsequence $\ve\downarrow0$,
	\begin{align}\label{eq:ptw.ui.eps.bulk}
	\lim_{\eps\downarrow 0}u_{\eps,i} = u_i \quad \mathcal{L}^{1+d}\text{-a.e.\ in }(0,\infty)\times\Om.
	\end{align}	
Combining the uniform bounds in Lemma~\ref{lem4.2} with~\eqref{eq:ptw.ui.eps.bulk}, we thus infer for all $T>0$, along the same subsequence $\ve\downarrow0$,
\begin{align*}
\lim_{\epsilon\downarrow0}\Big\|\sqrt{u_{\eps,i}} - \sqrt{u_i}\Big\|_{L^2((0,T)\times\Osm)}=0.
\end{align*}
Moreover, from the boundedness of $\{\sqrt{u_{\eps,i}}\}_{\eps}$ in $L^2((0,T);H^1(\Osm))$, we deduce the weak convergence $\sqrt{u_{\eps,i}}\rightharpoonup \sqrt{u_i}$ in $L^2(0,T;H^1(\Osm))$. Therefore, thanks to the weak lower semicontinuity of the $L^2$-norm, 
	\begin{equation*}
		\int_0^T\int_{\Osm}\big|\nabla \sqrt{u_i}\big|^2\dxdt \le \liminf_{\ve\downarrow0}\int_0^T\int_{\Osm}\Big|\nabla\sqrt{u_{\eps,i}}\Big|^2\dxdt \lesssim 1.
	\end{equation*}
	Finally, by the interpolation trace inequality
	\begin{align*}
		\int_0^T\int_{\Gamma_\sm}\Big|\sqrt{u_{\eps,i}} - \sqrt{u_i}\Big|^2\dSdt
		\le \Big\|\sqrt{u_{\eps,i}} - \sqrt{u_i}\Big\|_{L^2((0,T)\times\Osm)}\Big\|\sqrt{u_{\eps,i}} - \sqrt{u_i}\Big\|_{L^2(0,T;H^1(\Osm))},
	\end{align*}
and thus
	\begin{equation*}
		\sqrt{\text{tr}_{\Gamma_\sm}u_{\eps,i}} \to \sqrt{\text{tr}_{\Gamma_\sm} u_i} \qquad\text{ in}\quad L^2((0,T)\times\Gamma)
	\end{equation*}
	for all $T<\infty$.
Upon passage to another subsequence, we conclude
 the pointwise convergence $\mathcal{L}^1\otimes \mathcal{H}^{d-1}$-a.e.\ of $\tr_{\Gamma_\sm} u_{\eps,i}^{\sm}$ to $\tr_{\Gamma_\sm} u_i^{\sm}$ in $(0,\infty)\times\Gamma$.
	 
	This completes the proof of Lemma \ref{l:compactness}.
\end{proof}

\begin{lemma}[Preliminary equation with defect measure]\label{l:equation.defect-meas} Let $u$ be as in Lemma~\ref{l:compactness} and recall the notation $\wt\bfu^\beta:=(u,\tilde u^\beta)$ for $\beta\in\Gamma$. There exist signed Radon measures $\mu_i^{E,\beta,\sm}\in \mathcal{M}_\loc([0,\infty)\times\overline{V_\beta\cap \Osm})$, $i=1,\dots,n$, $\sm\in\{\pm\}$,
	such that, abbreviating  $Q_{T,\infty}^{\sm} := (0,\infty)\times(V_\beta\cap \Osm)$,
	for any function $\psi\in C^\infty_c({[0,\infty)}\times {V_\beta})$,  
	it holds that
	\begin{equation}\label{eq:defect}
	\begin{aligned}
		-\int_{V_\beta\cap\Osm}& \xi_i^E(\wt\bfu^{\beta}_0)\psi(0)\dd x - \int_{Q_{\infty,\beta}^{\sm}} \xi_i^E(\wt\bfu^{\beta})\partial_t \psi \dxdt\\
		&= -\int_{\overline Q_{\infty,\beta}^{\sm}}\psi \dd\mu_i^{E,\beta,\sm}(t,x)   -\sum_{j=1}^{n}\int_{Q_{\infty,\beta}^{\sm}} D_j\xi_i^E(\wt\bfu^\beta)\nabla\psi\cdot A_j \nabla u_j \dxdt \\
		&\quad -\sum_{j=1}^n\int_{0}^{\infty}\!\!\int_{\Gamma_\sm\cap V_\beta} \psi D_j\xi_i^E(\wt\bfu^{\beta})r^{\sm}_j(\bfu)\dSdt  + \sum_{j=1}^{n}\int_{Q_{\infty,\beta}^{\sm}} \psi D_j\xi_i^E(\wt\bfu^{\beta})f_j(u)\dxdt
			\\&\quad  - \sum_{j=1}^n\int_{Q_{\infty,\beta}^{-\sm}}D_{j+n}\xi_i^E(\wt\bfu^\beta)\circ\Phi_{\beta,-\sm}  \nabla(\psi\circ \Phi_{\beta,-\sm})\cdot A_j\nabla u_{j} \dxdt 
		\\&\quad-\sum_{j=1}^n\int_{0}^{\infty}\!\!\int_{\Gamma_{-\sm}\cap V_\beta} (\psi D_{j+n}\xi_i^E(\wt\bfu^{\beta}))\circ\Phi_{\beta,-\sm} r_j^{-\sm}(\bfu)\dSdt\\
		&\quad + \sum_{j=1}^{n} \int_{Q_{\infty,\beta}^{-\sm}}\big(\psi D_{j+n}\xi_i^E(\wt\bfu^{\beta})\big)\circ \Phi_{\beta,-\sm} f_j(u)\dxdt,
	\end{aligned}
\end{equation}
	where for all $T<\infty$
	\begin{align}\label{eq:meas.E.tail}
		\lim_{E\to\infty}|\mu_i^{E,\beta,\sm}|\big([0,T)\times\overline{V_\beta\cap \Osm}\big)=0.
	\end{align}
\end{lemma}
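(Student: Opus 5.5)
The plan follows Fischer's construction \cite[Lemma~5 and Proposition~6]{Fischer_2015}, now applied to the truncated quantities $\xi_i^E(\wt\bfu_\ve^\beta)$ on $V_\beta\cap\Om_\sm$. The steps are: write the exact approximate renormalised equation, pass to the limit $\ve\downarrow0$, and collect the single non-convergent term into a defect measure.

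\emph{Step 1: the $\ve$-level identity.} Test the weak formulation~\eqref{eq:u-eps.weak} with $\psi D_j\xi_i^E(\wt\bfu_\ve^\beta)$, and the equation for $u_\ve^{-\sm}$ with $(\psi D_{j+n}\xi_i^E(\wt\bfu_\ve^\beta))\circ\Phi_{\beta,-\sm}$, exactly as in the computation of $(I)$, $(II)$ in the proof of Lemma~\ref{l:compactness}. This uses the change of variables via the measure-preserving $\Phi_\beta$ and properties \ref{it:Phi.biLipschitz.homeo}--\ref{it:Phi.measure-pres}. We obtain~\eqref{eq:defect} with $u,f,r$ replaced by $u_\ve,f_\ve,r_\ve$, and with $\mu_i^{E,\beta,\sm}$ replaced by the absolutely continuous measure of density
\[
\sum_{j,q=1}^{2n}\psi\text{-free density }D_qD_j\xi_i^E(\wt\bfu_\ve^\beta)\,\nabla \wt u_{\ve,q}\cdot \wt A_j\nabla \wt u_{\ve,j}
\]
(commutator terms, with the reflected components transported by $\Phi_\beta$). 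By (V2) together with~\eqref{eq:edi.approx} and $|D\Phi_\beta|\lesssim1$, these densities $\nu^{E}_{\ve}$ are bounded in $L^1((0,T)\times(V_\beta\cap\Om_\sm))$ uniformly in $\ve$ and $E$. Along a subsequence they therefore converge weak-$*$ to a Radon measure $\mu_i^{E,\beta,\sm}$ on $[0,T]\times\overline{V_\beta\cap\Om_\sm}$.

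\emph{Step 2: passing to the limit $\ve\downarrow0$ in the remaining terms.} The left-hand side converges by the a.e.\ convergence of Lemma~\ref{l:compactness} and the boundedness of $\xi_i^E$ with bounded derivative, together with $u_{0,\ve}\to u_0$. The reaction and interface terms have integrands supported where $|\wt\bfu_\ve^\beta|_1\le 2E$ (by (V3)). There $f_\ve\to f$ and $r_\ve\to r$ uniformly on compacts, so dominated convergence applies, using the a.e.\ convergence of traces from Lemma~\ref{l:compactness} for the interface terms. The trace of the reflected component on $\Gamma$ equals the trace from the other side, as noted in~\eqref{eq:trace.tilde}. For the flux terms, write $D_j\xi_i^E(\cdot)A_j\nabla u_{\ve,j}=2D_j\xi_i^E(\cdot)\sqrt{u_{\ve,j}}\,A_j\nabla\sqrt{u_{\ve,j}}$. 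The prefactor converges strongly in $L^2$ (it is bounded and converges a.e.), and $\nabla\sqrt{u_{\ve,j}}$ converges weakly in $L^2$, so the product converges weakly.

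\emph{Step 3: vanishing of the defect as $E\to\infty$ (main obstacle).} Instead of bounding the limit measure through lower semicontinuity, split $\nu^E_\ve$ according to whether $|\wt\bfu_\ve^\beta|_1\le K$ or $>K$. On $\{|\wt\bfu_\ve^\beta|_1\le K\}$, (V7) gives $|D^2\xi_i^E|\to0$ uniformly as $E\to\infty$, while $|\nabla u_\ve|\lesssim_K|\nabla\sqrt{u_\ve}|$. This part of the total variation is therefore $o_E(1)$ uniformly in $\ve$. On $\{|\wt\bfu_\ve^\beta|_1>K\}$ we need the tail of the Fisher information, $\sup_\ve\int_{\{|\wt\bfu_\ve^\beta|_1>K\}}|\nabla\sqrt{u_\ve}|^2+|\nabla\sqrt{\tilde u_\ve^\beta}|^2$, to tend to $0$ as $K\to\infty$. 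This is the delicate point, because only an $L^2$ bound is available. I would first try Fischer's device: choose $E$ adapted and use that $D^2\xi^E$ is supported in $\{E\le|\bfu|_1\le 2E\}$, where (V2) improves to $|D^2\xi^E|\lesssim E^{-1}$ up to the $\sqrt{u_iu_k}$ weight. Then average over dyadic $E\in[E_0,E_0^{N}]$ (the log-truncation idea behind~\eqref{eq:decay.xi}): since the shells are disjoint, the average of the masses is $\lesssim N^{-1}\cdot\sup_\ve\|\nabla\sqrt{u_\ve}\|_{L^2}^2$. This yields~\eqref{eq:meas.E.tail} along a sequence $E\to\infty$. If the exact statement for all $E$ is needed, replace $\xi^E$ by such averaged truncations, which still satisfy (V1)--(V9). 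Lower semicontinuity of the total variation under weak-$*$ convergence then transfers the bound to $|\mu_i^{E,\beta,\sm}|$.
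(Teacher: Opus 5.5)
Your Steps 1 and 2 match the paper's proof. You test with $\psi D_j\xi_i^E(\wt\bfu_\ve^\beta)$ and its reflected counterpart, collect the second-derivative terms into measures that (V2) bounds in total variation uniformly in $\ve$, and pass to the limit in the remaining terms.

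The gap is in Step 3. Your first attempt needs
$\sup_\ve\int_{\{|\wt\bfu_\ve^\beta|_1>K\}}(|\nabla\sqrt{u_\ve}|^2+|\nabla\sqrt{\tilde u_\ve^\beta}|^2)\to0$.
As you note, this does not follow from an $L^2$ bound. The averaging device you fall back on yields only \emph{some} sequence $E_k\to\infty$ with $|\mu_i^{E_k,\beta,\sigma}|([0,T)\times\overline{V_\beta\cap\Omega_\sigma})\to0$. The lemma asserts the full limit $E\to\infty$ for the fixed truncations $\xi_i^E$. Replacing $\xi^E$ by averaged truncations proves a statement about different functions, whose properties (e.g.\ (V8)) you would have to re-check.

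The missing idea is the order of limits. The obstacle appears only because you try to make the $E$-tail small uniformly in $\ve$.
\begin{enumerate}
\item Localise to the $E$-independent shells $S_{K,\ve}:=\{K-1\le|\wt\bfu^\beta_\ve|_1<K\}$ and set $\nu_{j,K,\ve}:=\chi_{S_{K,\ve}}\big(|\nabla\sqrt{u_{\ve,j}}|^2+|\nabla\sqrt{\tilde u_{\ve,j}^\beta}|^2\big)\,\mathrm{d}x\,\mathrm{d}t$. Extract weak-$*$ limits $\nu_{j,K}$ along one diagonal subsequence.
\item Let $s_K(E):=\sup\{\sqrt{u_ju_q}\,|D_jD_q\xi_i^E(\bfu)| : K-1\le|\bfu|_1\le K,\ 1\le j,q\le 2n\}$. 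By (V2) and Young, $|\mu^{E,\beta,\sigma}_{\ve,i}|\le C\sum_{j,K}s_K(E)\,\nu_{j,K,\ve}$. For fixed $E$ only finitely many $K$ contribute, and none with $K\le E$, since $\xi_i^E(\bfu)=u_i$ on $\{|\bfu|_1\le E\}$.
\item Lower semicontinuity of the total variation on the relatively open set $[0,T)\times\overline{V_\beta\cap\Omega_\sigma}$, together with upper semicontinuity of $\nu_{j,K,\ve}$ on compacts applied to this finite sum, gives
$|\mu^{E,\beta,\sigma}_i|([0,T)\times\overline{V_\beta\cap\Omega_\sigma})\le C\sum_j\sum_{K>E}\nu_{j,K}([0,T]\times\overline{V_\beta\cap\Omega_\sigma})$.
\item Fatou's lemma over $K$ gives, for $T'>T$,
$\sum_K\nu_{j,K}([0,T]\times\overline{V_\beta\cap\Omega_\sigma})\le\liminf_\ve\int_0^{T'}\!\!\int(|\nabla\sqrt{u_{\ve,j}}|^2+|\nabla\sqrt{\tilde u_{\ve,j}^\beta}|^2)\,\mathrm{d}x\,\mathrm{d}t<\infty$.
\end{enumerate}
So $|\mu_i^{E,\beta,\sigma}|$ is bounded by the tail of a convergent series and tends to $0$ for the full limit $E\to\infty$. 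No equi-integrability, averaging, or change of truncation is needed.

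This is the paper's argument, phrased there as dominated convergence in $K$ using (V2) and (V7). Your dyadic shells would also work once the $\ve$-limit is taken shell by shell first, since each interval $[E,2E]$ meets at most two dyadic shells.
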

\begin{proof}
 Let $\psi\in C^\infty_c({[0,\infty)}\times \ol\Om_\sm)$ with $\supp\psi\Subset[0,\infty)\times (V_\beta\cap\ol\Om_\sm)$ be given.
	We aim to derive~\eqref{eq:defect} by taking the limit $\ve\downarrow0$ in \eqref{b6}, which is valid for arbitrary $\beta\in\Gamma$. We choose $\hat\vp_\beta\in C^\infty_c(V_\beta\cap\ol\Om_\sm)$ in~\eqref{b6} such that $\hat\vp_\beta=1$ on $\supp\psi(t,\cdot)$ for all $t\in[0,\infty)$, which is possible since, by hypothesis, $\psi(t,\cdot)\equiv0$ for all $t\ge T_\psi$ for some $T_\psi<\infty$. 
	Using the notations in the proof of Lemma~\ref{l:compactness} and observing the convergence  $\xi_i^E(\wt\bfu^{\beta}_\ve)\to \xi_i^E(\wt\bfu^{\beta})$ in $L^1((0,T)\times(V_\beta\cap\Om))$, we infer from~\eqref{b6} (with $t_2>T_\psi$),
	\begin{equation}\label{b8}
		-\int_{Q_{\infty,\beta}^{\sm}}\xi_i^E(\tilde\bfu_0^\beta)
		\psi(0)\,\dd x - \int_{Q_{\infty,\beta}^{\sm}}\xi_i^E(\wt\bfu^{\beta}) 
		\partial_t \psi \dxdt = \lim_{\eps\to0}\Big[(I^\eps) + (II^\eps)\Big],
	\end{equation}
	where $(I^\eps)$, $(II^\eps)$ correspond to the terms on the right-hand side of~\eqref{b6}, where here the superscripts are inserted to indicate the dependence on $\eps>0$. 
	The passage to $\ve\downarrow0$ in $(I5^\eps)$, $(I6^\eps)$ (cf.~\eqref{b7}) is enabled by Lemma \ref{l:compactness} and property (V3) of the truncation $\xi_i^E$,
	\begin{align*}
		\lim_{\ve\downarrow0}((I5^\eps) + (I6^\eps)) = -\sum_{j=1}^n\int_{0}^{\infty}\!\!\int_{\Gamma\cap V_\beta} \psi D_j\xi_i^E(\wt\bfu^\beta)r_i^{\sm}(\bfu)\dSdt + \sum_{j=1}^{n}\int_{Q_{\infty,\beta}^{\sm}} \psi D_j\xi_i^E(\wt\bfu^\beta)f_j(u)\dxdt.
	\end{align*}
	Recalling that $\nabla\hat\vp_\beta\equiv0$ on $\supp\psi(t,\cdot)$ for all $t\ge0$, we find that $(I3^\eps)\equiv0$. For $(I4^\eps)$ we obtain
	\begin{align*}
		\lim_{\ve\downarrow0}(I4^\eps) = -2\lim_{\ve\downarrow0}\int_{Q_{\infty,\beta}^{\sm}}A_j\sqrt{u_{\eps,j}}  D_j\xi_i^{E}(\wt\bfu_\eps^\beta) \nabla \sqrt{u_{\eps,j}}\cdot\nabla \psi\, \dxdt\\
		= -2\int_{Q_{\infty,\beta}^{\sm}}A_j\sqrt{u_j} D_j\xi_i^E(\wt\bfu^\beta)\nabla \sqrt{u_j}\cdot  \nabla \psi\,\dxdt,
	\end{align*}
	where we used the weak convergence of the gradient term, the pointwise convergence and boundedness of of $\sqrt{u_{\eps,j}}D_j\xi_i^E(\wt\bfu_\eps^\beta)$, as well as \cite[Lemma A.2]{FHKM_2022}. Repeating these arguments for $(II^\ve)$ yields
	\begin{align*}
		\lim_{\ve\downarrow0}(II2^\eps) &= -\sum_{j=1}^n\int_{0}^{\infty}\!\!\int_{\Gamma_{-\sm}\cap V_\beta} (\psi D_{j+n}\xi_i^E(\wt\bfu^\beta)){\circ\Phi_{\beta,-\sm} } r_j^{-\sm}(\bfu)\dSdt,
		\\	\lim_{\ve\downarrow0}(II3^\eps) &= \sum_{j=1}^{n}\int_{Q_{\infty,\beta}^{-\sm}}\psi \circ\Phi_{\beta,-\sm} D_{j+n}\xi_i^E(\wt\bfu^\beta\circ\Phi_{\beta,-\sm}) f_j(u)\dxdt,
	\end{align*}
	and, with the notation $\tilde \psi:= \psi\circ \Phi_{\beta,-\sm}$,
	\begin{align*}
		\lim_{\ve\downarrow0}\sum_{j=1}^n\int_{Q_{\infty,\beta}^{-\sm}} D_{j+n}\xi_i^E(\wt\bfu^\beta_\eps\circ\Phi_{\beta,-\sm})A_j  \nabla   u_{\eps,j} \cdot \nabla\tilde\psi\,\dxdt
		= \sum_{j=1}^n\int_{Q_{\infty,\beta}^{-\sm}}D_{j+n}\xi_i^E(\wt\bfu^\beta\circ\Phi_{\beta,-\sm})A_j  \nabla u_{j} \cdot \nabla\tilde{\psi}\, \dxdt.
	\end{align*}
	Inserting these limits into~\eqref{b8} and reordering terms,  we deduce
		\begin{align*}
			-\int_{V_\beta\cap \Osm}& \xi_i^E(\tilde\bfu_0^\beta)\psi(0)\dd x - \int_{Q_{\infty,\beta}^{\sm}}\xi_i^E(\wt\bfu^{\beta}) \partial_t \psi \dxdt\\
			&= -\lim_{\ve\downarrow0}\bigg(\sum_{j,q=1}^{n}\int_{Q_{\infty,\beta}^{\sm}}\psi\Big[ A_j \nabla u_{\eps,j} \cdot\big(D_qD_j\xi_i^E(\wt\bfu^\beta_\eps)\nabla u_{\eps,q} + D_{q+n}D_j\xi_i^E(\wt\bfu^\beta_\eps)\nabla \tilde u_{\eps,q}^{\beta}  \big)\\
			&\hspace{3.75cm} +\tilde{A}_j \nabla \tilde{u}_{\eps,j}^{\beta}\big(D_{q}D_{j+n}\xi_i^E(\wt\bfu^\beta_\eps)\nabla u_{\eps,q} + D_{q+n}D_{j+n}\xi_i^E(\wt\bfu^\beta_\eps)\nabla \tilde{u}_{\eps,q}^{\beta}\big)\Big]\dxdt \bigg)\\
			&\quad -\sum_{j=1}^n\int_{0}^{\infty}\!\!\int_{\Gamma_\sm\cap V_\beta} \psi D_j\xi_i^E(\wt\bfu^\beta)r_j^{\sm}(\bfu)\dSdt + \sum_{j=1}^{n}\int_{Q_{\infty,\beta}^{\sm}} \psi D_j\xi_i^E(\wt\bfu^\beta)f_j(u)\dxdt\\
			&\quad -\int_{Q_{\infty,\beta}^{\sm}}D_j\xi_i^E(\wt\bfu^\beta)A_j  \nabla {u_j }\cdot \nabla \psi \dxdt - \sum_{j=1}^n\int_{Q_{\infty,\beta}^{-\sm}} D_{j+n}\xi_i^E(\wt\bfu^\beta\circ \Phi_{\beta,-\sm})A_j  \nabla   u_{j} \cdot\nabla\tilde{\psi}\dxdt \\&\quad-\sum_{j=1}^n\int_{0}^{\infty}\!\!\int_{\Gamma_{-\sm}\cap V_\beta}(\psi D_{j+n}\xi_i^E(\wt\bfu^\beta)){\circ\Phi_{\beta,-\sm} } r_j^{-\sm}(\bfu)\dSdt
			\\&\quad  + \sum_{j=1}^{n}\int_{Q_{\infty,\beta}^{-\sm}} \tilde\psi D_{j+n}\xi_i^E(\wt\bfu^\beta\circ \Phi_{\beta,-\sm}) f_j(u)\dxdt,
		\end{align*}
	where we used a change of variables and abbreviate $\tilde A_j :=
	\big(D\Phi_\beta \,A_j(D\Phi_{\beta})^T\big)\circ \Phi_{\beta}^{-1}$
to arrive at the common domain of integration $Q_{\infty,\beta}^{\sm}$ in the second and third line.

	We then define the signed Radon measures
	\begin{align*}
		\mu_{\eps,i}^{E,\beta,\sm}:= &\sum_{j,q=1}^{n}  \Big[ A_j \nabla u_{\eps,j} \cdot\big(D_qD_j\xi_i^E(\wt\bfu^\beta_\eps)\nabla u_{\eps,q} + D_{q+n}D_j\xi_i^E(\wt\bfu^\beta_\eps)\nabla \tilde u_{\eps,q}^{\beta}  \big)\\
		&+\tilde{A}_j \nabla \tilde{u}_{\eps,j}^{\beta}\big(D_{q}D_{j+n}\xi_i^E(\wt\bfu^\beta_\eps)\nabla u_{\eps,q} + D_{q+n}D_{j+n}\xi_i^E(\wt\bfu^\beta_\eps)\nabla \tilde{u}_{\eps,q}^{\beta}\big)\Big]\dxdt.
	\end{align*}
	From the estimates of $(I1)$, $(I2)$, $(II1)$ (cf.~\eqref{eq:I1I2}), we see that for all $T'<\infty$
	\begin{equation*}
		|\mu_{\eps,i}^{E,\beta,\sm}|([0,T']{\times}\ol{V_\beta\cap\Om_{\sm}}) \le C_{T'}<\infty.
	\end{equation*}
 Thus, up to a subsequence, we deduce the local convergence $\mu_{\eps,i}^{E,\beta,\sm} \weakstar \mu_i^{E,\beta,\sm}$ in
 $\mathcal{M}_\loc([0,\infty){\times}\ol{V_\beta\cap\Om_{\sm}})$
	 for a signed Randon measure $\mu_i^{E,\beta,\sm}\in \mathcal{M}_\loc([0,\infty){\times}\ol{V_\beta\cap\Om_{\sm}})$. This implies~\eqref{eq:defect}.
	 
	  It remains to show~\eqref{eq:meas.E.tail} for every $T<\infty$. Defining the measures
	\begin{align*}
		&\nu_{j,K,\eps}:= \chi_{\{K - 1 \le |\tilde\bfu^{\beta}|_1 < K\}}\Big|\nabla\sqrt{u_{\eps,j}} \Big|^2\dxdt,
		\\[2mm]&	\tilde\nu_{j,K,\eps}^{\beta}:= \chi_{\{K - 1 \le |\tilde\bfu^{\beta}|_1 < K\}}\Big|\nabla\sqrt{\tilde u_{\eps,j}^{\beta}} \Big|^2\dxdt,
	\end{align*}
	we can estimate 
	\begin{align*}
		|\mu_{\eps,i}^{E,\beta}|([0,T){\times}\overline{V_\beta\cap \Osm}) &\le C\sum_{j,q=1}^{n}\sum_{K=1}^{\infty}\int_0^T\!\int_{\overline{V_\beta\cap \Osm}}\Bigg(
		\big|\sqrt{u_{\eps,j}}\sqrt{u_{\eps,q}}
		D_qD_j\xi_i^E(\wt\bfu^\beta_\eps)
		\big|\chi_{\{K - 1 \le |\tilde\bfu^{\beta}|_1 < K\}}\big|\nabla \sqrt{u_{\eps,j}}\big|^2\\ 
		&\qquad+ 
		\big|\sqrt{u_{\eps,j}}\sqrt{\tilde u_{\eps,q}^{\beta}}
		D_{q+n}D_j\xi_i^E(\wt\bfu^\beta_\eps)
		\big|\chi_{\{K - 1 \le |\tilde\bfu^{\beta}|_1 < K\}}\big(\big|\nabla \sqrt{u_{\eps,j}}\big|^2 + \nabla \sqrt{\tilde u_{\eps,q}^{\beta}}\big|^2\big)\\
		\\&\qquad+\big|\sqrt{\tilde u_{\eps,j}^{\beta}}\sqrt{u_{\eps,q}}
		D_qD_{j+n}\xi_i^E(\wt\bfu^\beta_\eps)
		\big|\chi_{\{K - 1 \le |\tilde\bfu^{\beta}|_1 < K\}}\big(\big|\nabla \sqrt{\tilde u_{\eps,j}^{\beta}}\big|^2 + |\nabla\sqrt{u_{\eps,q}}|^2\big)\\
		\\&\qquad+\big|\sqrt{\tilde u_{\eps,j}^{\beta}}\sqrt{\tilde u_{\eps,q}^{\beta}}D_{q+n}D_{j+n}\xi_i^E(\wt\bfu^\beta_\eps)
		\big|\chi_{\{K - 1 \le |\tilde\bfu^{\beta}|_1 < K\}}\big|\nabla \sqrt{\tilde u_{\eps,j}^{\beta}}\big|^2
		\Bigg)\dxdt\\
		\le C\sum_{j=1}^{n}\sum_{K=1}^{\infty}\Big(&\nu_{j,K,\eps}([0,T){\times}\overline{V_\beta\cap \Osm}) + \tilde \nu_{j,K,\eps}^{\beta}([0,T){\times}\overline{V_\beta\cap \Osm}) \Big)\underset{\underset{K-1\le |\bfu|_1 \le K}{1\le j,q\le 2n}}{\sup}|\sqrt{u_j}\sqrt{u_q}D_{j}D_{q}\xi_i^E(\bfu)|.
	\end{align*}
	From the boundedness, uniformly in $j, K$ and $\eps$, of $\nu_{j,K,\eps}$ and $\tilde\nu_{j,K,\eps}^{\beta}$ in $\mathcal{M}([0,T']{\times}\ol{V_\beta\cap\Om_{\sm}})$ for every $T'<\infty$,
	 we deduce, along a subsequence $\ve\downarrow0$, $\nu_{j,K,\eps} \weakstar \nu_{j,K}$ and $\tilde\nu_{j,K,\eps}^{\beta} \weakstar \tilde\nu_{j,K}^{\beta}$ in
	 $\mathcal{M}_\loc([0,\infty){\times}\ol{V_\beta\cap\Om_{\sm}})$.
	Therefore, using also the lower semicontinuity of the total variation measure on relatively open  subsets with respect to weak-$*$ convergence of signed Radon measures, we deduce
	\begin{align*}
	&	|\mu_{i}^{E,\beta,\sm}|([0,T){\times}\overline{V_\beta\cap \Osm})
		\le \liminf_{\ve\downarrow0}|\mu_{\eps,i}^{E,\beta,\sm}|([0,T){\times}\overline{V_\beta\cap \Osm})\\
		&\quad\le C\sum_{j=1}^{n}\sum_{K=1}^{\infty}\big(\nu_{j,K}([0,T){\times}\overline{V_\beta\cap \Osm})
		 + \tilde \nu_{j,K}^{\beta}([0,T){\times}\overline{V_\beta\cap \Osm})\big)
		 \underset{\underset{K-1\le |\bfu|_1 \le K}{1\le j,q\le 2n}}{\sup}|\sqrt{u_j}\sqrt{u_q}D_{j}D_{q}\xi_i^E(\bfu)|,     
	\end{align*}
where
\begin{align*}
	\sum_{K=1}^{\infty}\Big(\nu_{j,K}([0,T){\times}\overline{V_\beta\cap \Osm}) &+ \tilde \nu_{j,K}^{\beta}([0,T){\times}\overline{V_\beta\cap \Osm}) \Big)
	\\&\le \liminf_{\ve\downarrow0}\sum_{K=1}^{\infty}\Big(\nu_{j,K,\eps}([0,T){\times}\overline{V_\beta\cap \Osm}) + \tilde \nu_{j,K,\eps}^{\beta}([0,T){\times}\overline{V_\beta\cap \Osm}) \Big)\\
	&= \liminf_{\ve\downarrow0}\int_0^T\!\!\int_{\overline{V_\beta\cap \Osm}}\bra{\Big|\nabla\sqrt{u_{\eps,j}}\Big|^2 + \Big|\nabla\sqrt{\tilde u_{\eps,j}^{\beta}}\Big|^2}\dxdt < +\infty.
\end{align*}
	Therefore, by (V2), (V7), and the dominated convergence theorem, we obtain the asserted limit
	\begin{align*}
		&\lim_{E\to\infty}|\mu_{i}^{E,\beta,\sm}|([0,T){\times}\overline{V_\beta\cap \Osm})
		\\&\;\; \le C\sum_{j=1}^{n}\sum_{K=1}^{\infty}\Big(\nu_{j,K}([0,T){\times}\overline{V_\beta\cap \Osm}) + \tilde \nu_{j,K}^{\beta}([0,T){\times}\overline{V_\beta\cap \Osm}) \Big)\lim_{E\to\infty}\underset{\underset{K-1\le |\bfu|_1 \le K}{1\le j,q\le 2n}}{\sup}|\sqrt{u_ju_q}D_{j}D_{q}\xi_i^E(\bfu)| = 0.
	\end{align*}
\end{proof}

We need the following weak chain rule, which can be deduced from~\cite[Lemma 4]{Fischer_2015}.
\begin{lemma}\label{lem:chain_rule} Let $W:=V\cap\ol{\Om}_0$, where $\Om_0$ is a bounded Lipschitz domain and $V$ is open, and let $\Gamma\subset V\cap\pa\Om_0$  be $\mathcal{H}^{d-1}$-measurable.
	Let ${\textbf{v}}=(v_i)_{i=1}^{m} \in L^1_\loc([0,\infty);L^1(W)^{m})\cap L^2_\loc([0,\infty);H^1(W)^{m})$, $\textbf{v}_0=(v_{0,i})_{i=1}^m\in L^1(W)^{m}$.
	 Let $\nu_i\in \mathcal{M}_\loc([0,\infty)\times\ol W)$, $p_i \in L^1_\loc([0,\infty);L^1(W))$, $\mathsf{z}_i\in L^2_\loc([0,\infty);L^2(W)^d)$, $a_i\in L^1_\loc([0,\infty);L^1(\Gamma))$ for $i=1,\dots,m$. 
		Assume that,  for $i=1,\dots,m$, it holds for all $\psi \in C^\infty_c([0,\infty)\times V)$
	\begin{align}\label{eq:hp.weak.chainrule}
		\begin{aligned}
		 - \int_{W}v_{0,i} \psi(0) \dd x -\int_{0}^\infty\!\!\int_{W}&v_i \partial_t\psi \dxdt\\
		&= \int_{[0,\infty){\times}\ol W}\psi \dd \nu_i + \int_0^\infty\!\!\int_{W}(p_i \psi + \mathsf{z}_i \cdot \nabla \psi)\dxdt + \int_{0}^\infty\!\!\int_{\Gamma}a_i \psi \dSdt.
\end{aligned}
\end{align}
	Let $\xi\in C^\infty(\mathbb{R}^m)$ with $D\xi\in C_c^\infty(\mathbb{R}^m)^m$.
	Then, for all $\psi \in C^{\infty}_c([0,\infty)\times V)$ and $T_\psi<\infty$ such that $\supp\psi\Subset[0,T_\psi)\times V$ it holds that
	\begin{align*}
		\Bigg| &
		- \int_{W} \xi(\bff{v}(0))\psi(0)\dd x-\int_{0}^\infty\!\!\int_{W} \xi(\bff{v}) \partial_t \psi \dxdt\\
		& - \sum_{i=1}^{m}\int_{0}^\infty\!\!\int_{W} p_i \psi D_i \xi(\bff v)\dxdt 
		 - \sum_{i=1}^{m}\int_{0}^\infty\!\!\int_{W}\mathsf{z}_i\cdot \nabla(\psi D_i\xi(\bff v))\dxdt 
		- \sum_{i=1}^{m}\int_{0}^\infty\!\!\int_{\Gamma}a_i \psi D_i\xi(\bff v)\dSdt
		\Bigg|\\
	&\lesssim \|\psi\|_{L^\infty}\|D\xi\|_{L^\infty(\mathbb{R}^m)} \sum_{i=1}^{m}\|\nu_i\|_{\mathcal{M}([0,T_\psi)\times\ol W)}.
	\end{align*}
\end{lemma}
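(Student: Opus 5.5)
We will prove the weak chain rule by regularisation of $\bff v$, following the strategy of~\cite[Lemma 4]{Fischer_2015}; the only new ingredient is the boundary term on $\Gamma$, which appears because test functions need not vanish on $\Gamma\subset\pa\Om_0$.

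\textbf{Step 1: regularise in time.} Extend $v_i$ by $v_{0,i}$ for $t<0$, and let $\bff v^h$ be the backward Steklov average $v^h_i(t)=\frac1h\int_{t-h}^t v_i(s)\,\dd s$. Then \eqref{eq:hp.weak.chainrule} yields, for every test function $\psi$ and in the weak sense in $t$,
\[
\int_W \pa_t v_i^h\,\psi\,\dd x=\int_{\ol W}\psi\,\dd\nu_i^h(t)+\int_W\big(p_i^h\psi+\mathsf z_i^h\cdot\nabla\psi\big)\,\dd x+\int_\Gamma a_i^h\psi\,\dd\mathcal H^{d-1}.
\]
Here $\nu_i^h$ is the time-mollified measure, whose total variation on $[0,T_\psi)\times\ol W$ is at most $\|\nu_i\|$. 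We avoid spatial mollification because it would destroy the structure near $\pa\Om_0$. Note also that $\pa_tv^h_i=(v_i(t)-v_i(t-h))/h$ is integrable in $(t,x)$, so the relation above holds pointwise in $t$ as an identity in $(H^1)^*+$measures tested against $C^1$ functions.

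\textbf{Step 2: test with $\psi D_i\xi(\bff v^h)$.} Since $v^h_i\in W^{1,1}_t L^1_x\cap L^2_tH^1_x$ and $D\xi$ is bounded and smooth, the function $\psi D_i\xi(\bff v^h)$ lies in $L^2_tH^1_x\cap L^\infty$. It is admissible after approximation, because all terms on the right are either integrable against $L^\infty\cap H^1$ functions or are measures. The measure term requires continuity of the test function. We handle this by using the difference-quotient form $\pa_t v^h_i=(v_i(t)-v_i(t-h))/h$ directly, which identifies $\nu_i^h$ as an absolutely continuous measure for fixed $h$, with $L^1$ norm bounded by $\|\nu_i\|$. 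Summing over $i$ and using the classical chain rule $\sum_i D_i\xi(\bff v^h)\pa_tv^h_i=\pa_t\xi(\bff v^h)$ for the $W^{1,1}$-in-time function, we obtain after integrating by parts in $t$:
\[
-\int_W\xi(\bff v_0)\psi(0)-\iint\xi(\bff v^h)\pa_t\psi-\sum_i\iint\big(p^h_i\psi D_i\xi+\mathsf z_i^h\cdot\nabla(\psi D_i\xi)\big)-\sum_i\iint_\Gamma a_i^h\psi D_i\xi(\bff v^h)=\sum_i\int\psi D_i\xi(\bff v^h)\,\dd\nu_i^h.
\]
The right-hand side is bounded by $\|\psi\|_\infty\|D\xi\|_\infty\sum_i\|\nu_i\|$. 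The Steklov average of $\mathsf z_i$ should be understood in the sense that the equation holds with $\mathsf z_i^h$.

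\textbf{Step 3: pass $h\downarrow0$.} We have $\bff v^h\to\bff v$ in $L^1_\loc$ and a.e. (along a subsequence), $\nabla\bff v^h\to\nabla\bff v$ in $L^2_\loc$, $p_i^h\to p_i$ in $L^1$, $\mathsf z_i^h\to\mathsf z_i$ in $L^2$, and $a_i^h\to a_i$ in $L^1(\Gamma)$. The traces satisfy $\tr_\Gamma\bff v^h=(\tr_\Gamma\bff v)^h\to\tr_\Gamma\bff v$ in $L^2_{t}L^2(\Gamma)$ by the trace theorem on the Lipschitz domain $\Om_0$, and hence a.e. on $\Gamma$. With $D\xi$, $D^2\xi$ bounded, dominated convergence handles the $p$ and $a$ terms. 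The term $\mathsf z_i\cdot D^2\xi(\bff v^h)\nabla\bff v^h\,\psi$ converges by the product of strong $L^2$ convergence with bounded a.e.\ convergence. The error estimate is stable under the limit, which yields the claim.

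\textbf{Main obstacle.} The main difficulty is the justification in Step 2 that one may test the time-regularised equation containing the measure $\nu_i$ with the merely Sobolev function $D_i\xi(\bff v^h)$. The plan is to use the discrete-difference formulation, in which for fixed $h$ the defect appears as the $L^1$ function $\nu_i^h$. Its $L^1$ norm is controlled by $\|\nu_i\|$ via the Steklov averaging of a measure in time only, provided the measure has no singular part concentrating along $t=\mathrm{const}$. If it does, this must be controlled through the total variation bound.
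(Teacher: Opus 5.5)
Your overall plan (regularise only in time, test with $\psi D_i\xi(\bff{v}^h)$, let $h\downarrow0$) is reasonable. However, the step you yourself single out as the main obstacle is resolved by a false claim, so as written the proof has a genuine gap, and there is a second problem near $t=0$. (The paper gives no argument of its own here; it only refers to Fischer's Lemma~4.)

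The false claim is that Steklov averaging makes $\nu_i^h$ absolutely continuous. Averaging acts only in $t$ and cannot make $\nu_i^h$ absolutely continuous in $x$. For instance, a stationary $v_i$ whose gradient jumps across an interior hypersurface $S$, with $\mathsf{z}_i=\nabla v_i$, produces a defect $\nu_i=\mathcal{L}^1\otimes g\,\mathcal{H}^{d-1}|_S$, and then $\nu_i^h=\nu_i$ on $(h,\infty)\times\ol W$. The difference-quotient form only gives $\nu_i^h(t)=\pa_t v_i^h(t)-p_i^h(t)+\divv\mathsf{z}_i^h(t)-a_i^h(t)\,\mathcal{H}^{d-1}|_\Gamma$ as distributions. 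That is an $L^1$ function plus a functional acting on $H^1\cap L^\infty$, not an $L^1$ function. Consequently $\int\psi D_i\xi(\bff{v}^h)\,\mathrm{d}\nu_i^h$ has no a priori meaning for the merely $H^1\cap L^\infty$ class $D_i\xi(\bff{v}^h(t))$, and your bound for it is unproved. Your worry about concentration on $\{t=\mathrm{const}\}$ misses the point: the relevant singularity is in $x$.

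What is missing is to \emph{define} this term as the pairing given by the above representation. One then proves $|\langle\nu_i^h(t),\phi\rangle|\le\|\phi\|_{L^\infty}|\nu_i^h(t)|(\ol W)$ for $\phi\in H^1\cap L^\infty$ with compact support in $V$ by density. Take smooth $\phi_k$ with $\|\phi_k\|_{L^\infty}\le\|\phi\|_{L^\infty}$, $\phi_k\to\phi$ in $H^1$ and a.e., and traces converging a.e.\ on $\Gamma$; these can be obtained by translating inward and mollifying, using that $\pa\Om_0$ is Lipschitz. For each $\phi_k$ the pairing is a genuine integral bounded by $\|\phi\|_{L^\infty}|\nu_i^h(t)|$, and all other terms pass to the limit by $H^1$-convergence and dominated convergence. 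Alternatively, you can regularise in space as well, after extending across $\pa\Om_0\cap V$ (for example with the measure-preserving reflection of the appendix), so that the regularised defect is smooth.

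The second problem is the extension of $v_i$ by $v_{0,i}$ for $t<0$, which breaks your claim $\bff{v}^h\in L^2_tH^1_x$. For $t\in(0,h)$ one has $v_i^h(t)=\frac{h-t}{h}v_{0,i}+\frac1h\int_0^t v_i\,\mathrm{d}s$, and $v_{0,i}\in L^1(W)$ only. Hence the flux term $\mathsf{z}_i^h\cdot\nabla(\psi D_i\xi(\bff{v}^h))$ and the trace on $\Gamma$ are undefined on $(0,h)$, and $\tr_\Gamma\bff{v}^h=(\tr_\Gamma\bff{v})^h$ fails there. This is repairable. Extend instead by some $\bff{v}_0^\delta\in H^1(W)^m$ with $\bff{v}_0^\delta\to\bff{v}_0$ in $L^1$. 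The extra jump $(\bff{v}_0-\bff{v}_0^\delta)/h$ on $(0,h)$ costs at most $C\|\psi\|_{L^\infty}\|D\xi\|_{L^\infty}\|\bff{v}_0-\bff{v}_0^\delta\|_{L^1}$. Replacing $\xi(\bff{v}_0)$ by $\xi(\bff{v}_0^\delta)$ costs the same, and both errors vanish as $\delta\downarrow0$ after $h\downarrow0$.

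With these two repairs, the rest of your argument goes through: the chain rule in time, the total-variation bound for the time-averaged defect, and the limits in the source, flux and interface terms. The result would be a self-contained proof that avoids spatial mollification near the boundary.
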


We are now ready to prove the existence of global-in-time dissipative renormalised solutions.
\begin{proof}[Proof of Theorem \ref{thm:ex}]
	Let $\xi\in C^\infty(\mathbb{R}^{2n})$ with $D\xi\in C_c^\infty(\mathbb{R}^{2n})^n$ be an admissible truncation function (cf.\ Definition~\ref{def:renormalised.gen}).
	To derive the equation in $(0,T)\times(V_\beta\cap\ol\Om_\sm)$ for given $\beta\in\Gamma$, we wish to apply the weak chain rule in Lemma \ref{lem:chain_rule} with 
	$V=V_\beta, \Om_0=\Om_\sm$, and 
	$m=2n$ to the map
	\[
	\boldsymbol{v}:=\boldsymbol{\xi}^E(\tilde \bfu^\beta)= \big(\xi_i^E(u,\tilde u^\beta)
	\big)_{i=1,\ldots, 2n}.
	\]
	To this end, we must first determine the equations~\eqref{eq:hp.weak.chainrule}.

For $v_i=\xi_i^E(u,\tilde u^\beta),$ $i=1,\dots,n$, by Lemma~\ref{l:equation.defect-meas} with a change of variables in the fourth and sixth line in~\eqref{eq:defect}, we conclude that equation~\eqref{eq:hp.weak.chainrule} holds with the choice
	\begin{align*}
		&\nu_i^{\sm}:= -\mu_i^{E,\beta,\sm}\\
		&p_i^\sm:= \sum_{j=1}^{n}\big(D_j\xi_i^E(\wt\bfu^\beta)f_j(u)+D_{j+n}\xi_i^E(\wt\bfu^\beta)f_j(\tilde u^\beta)\big) 
			\\
		&\mathsf{z}_i^\sm:= -\sum_{j=1}^n(D_j\xi_i^E(\wt \bfu^\beta)A_j\nabla u_j+D_{j+n}\xi_i^E(\wt \bfu^\beta)\wt A_j\nabla\tilde u_j^\beta)\\
	&a_i^\sm:= -\sum_{j=1}^n\big(D_j\xi_i^E(\wt \bfu^\beta)r_j^\sm(\bfu) +D_{j+n}\xi_i^E(\wt\bfu^\beta)\circ\Phi_{\beta}\,r_j^{-\sm}(\bfu)\big),
	\end{align*}
	with
	\[\wt A_j := \big(D\Phi_\beta \,A_j(D\Phi_{\beta})^T\big)\circ \Phi_{\beta}^{-1}\]
	and where we recall that that $r^\pm_j(\bfu)$ is to be understood in the sense $r^\pm_j(\tr_\Gamma u^+,\tr_\Gamma u^-)$.
	
To determine the equation for $v_{i+n}=\xi_{i+n}^E(u,\tilde u^\beta),$  $i=1,\dots,n$, we observe that by construction and due to $\Phi_\beta^{-1}=\Phi_\beta$ 
\begin{align}\label{eq:xi.i+n.transform}
\xi_{i+n}^E(\wt\bfu^{\beta})=\xi_{i+n}^E(u,\tilde u^\beta)=\xi_{i}^E(\tilde u^\beta,u)=\xi_{i}^E(u,\tilde u^\beta)\circ\Phi_\beta^{-1}\qquad\text{ in }(0,\infty)\times (V_\beta\cap\Om).
\end{align}
Thus, it suffices to transform the equation for $\xi_{i}^E(u,\tilde u^\beta)$ for space variable $y\in V_\beta\cap\Om_{-\sm}$ in Lemma~\ref{l:equation.defect-meas} via $\Phi_\beta$ to $x\in V_\beta\cap\Om_{\sm}$. Choosing in~\eqref{eq:defect} the test function $\psi(t,\Phi_\beta(y))$ instead of $\psi(t,y)$, which is admissible by a density argument, and using~\eqref{eq:xi.i+n.transform}, we thereby deduce the identity
	\begin{equation*}
	\begin{aligned}
		-&\int_{V_\beta\cap\Osm} \xi_{i+n}^E(\wt\bfu^{\beta}_0)\psi(0)\dd x - \int_{Q_{\infty,\beta}^{\sm}} \xi_{i+n}^E(\wt\bfu^{\beta})\partial_t \psi \dxdt\\
		&= -\int_{\overline Q_{\infty,\beta}^{\sm}}\psi \dd((\Phi_{\beta})_{\#}\mu_i^{E,\beta,-\sm})(t,x)  
		 -\sum_{j=1}^{n}\int_{Q_{\infty,\beta}^{\sm}}   D_j\xi^E_{i}(\wt\bfu^{\beta})\circ \Phi^{-1}_\beta\nabla \psi 
	\cdot\wt A_j\nabla \tilde u_j^\beta \dxdt  \\&\qquad+ \sum_{j=1}^{n}\int_{Q_{\infty,\beta}^{\sm}} \psi 
		 \big(D_j\xi^E_{i}(\wt\bfu^{\beta}) f_j(u)\big)\circ \Phi^{-1}_\beta\dxdt\\
		&\qquad -\sum_{j=1}^n\int_{0}^{\infty}\int_{\Gamma_\sm\cap V_\beta} \psi \big(D_j\xi^E_{i}(\wt\bfu^{\beta})\circ \Phi^{-1}_\beta\big) r^{-\sm}_j(\bfu)\dSdt 
			\\&\qquad
			- \sum_{j=1}^n\int_{Q_{\infty,\beta}^{\sm}}D_{j+n}\xi_i^E(\wt\bfu^\beta)\circ\Phi_{\beta} \nabla\psi\cdot A_j\nabla u_{j} \dxdt 
			\\&\qquad-\sum_{j=1}^n\int_{0}^{\infty}\int_{\Gamma_{\sm}\cap V_\beta} \psi D_{j+n}\xi_i^E(\wt\bfu^{\beta}\circ\Phi_{\beta}) r_j^{\sm}(\bfu)\dSdt\\
			&\qquad + \sum_{j=1}^{n} \int_{Q_{\infty,\beta}^{\sm}}\psi D_{j+n}\xi_i^E(\wt\bfu^{\beta}\circ \Phi_{\beta}) f_j(u)\dxdt,
	\end{aligned}
\end{equation*}
Hence, Lemma~\ref{lem:chain_rule} becomes applicable if we further choose (recalling $\Phi_\beta^{-1}=\Phi_\beta$)
	\begin{align*}
	&\nu_{i+n}^{\sm}:=-(\Phi_{\beta})_{\#}\mu_i^{E,\beta,-\sm}\\  
	&p_{i+n}^\sm:= \sum_{j=1}^{n}   \big(D_j\xi^E_{i}(\wt\bfu^{\beta}\circ \Phi_\beta) f_j(\tilde u^\beta)
	+D_{j+n}\xi_i^E(\wt\bfu^{\beta}\circ \Phi_{\beta}) f_j(u)\big)\\
	& \mathsf{z}_{i+n}^\sm:= -\sum_{j=1}^n\big(D_j\xi_i^E(\tilde \bfu^\beta\circ\Phi_\beta)\, \wt A_j\nabla \tilde u_j^\beta+D_{j+n}\xi_i^E(\wt\bfu^\beta\circ\Phi_\beta) A_j\nabla u_j\big)\\
&	a_{i+n}^\sm:= -\sum_{j=1}^n\big(D_j\xi_i^E(\wt\bfu^\beta\circ\Phi_\beta) \,r_j^{-\sm}(\bfu)+D_{j+n}\xi_i^E(\wt\bfu^{\beta}\circ\Phi_{\beta}) r_j^{\sm}(\bfu)\big)
\end{align*}
for $i=1,\dots,n$. Now, invoking Lemma~\ref{lem:chain_rule} and observing that 
\[\|(\Phi_{\beta})_{\#}\mu_i^{E,\beta,-\sm}\|_{\mathcal{M}(\ol Q_{T_\psi,\beta}^\sm)}=\|\mu_i^{E,\beta,-\sm}\|_{\mathcal{M}(\ol Q_{T_\psi,\beta}^{-\sm})},\]
we  deduce
	 for $\psi \in C^{\infty}_c([0,\infty)\times V_\beta)$
	\begin{align}\label{eq:E.renorm}
		&\Bigg|
		- \int_{V_\beta\cap\Osm} \xi(\boldsymbol{\xi}^E(\tilde \bfu^\beta_0)\psi(0)\,\dd x-\int_{Q_{\infty,\beta}^\sm} \xi(\boldsymbol{\xi}^E(\tilde \bfu^\beta) \partial_t \psi \dxdt \\
	&\quad + \sum_{i=1}^{n}\int_{Q_{\infty,\beta}^\sm}\sum_{j=1}^n(D_j\xi_i^E(\wt \bfu^\beta)A_j\nabla u_j+D_{j+n}\xi_i^E(\wt \bfu^\beta)\wt A_j\nabla\tilde u_j^\beta)
	\cdot \nabla \big(\psi D_i\xi(\boldsymbol{\xi}^E(\tilde \bfu^\beta))\big)\dxdt  \nonumber\\
		&\quad + \sum_{i=1}^{n}\int_{Q_{\infty,\beta}^\sm}\sum_{j=1}^n
		\big(D_j\xi_i^E(\tilde \bfu^\beta\circ\Phi_\beta)\, \wt A_j\nabla \tilde u_j^\beta+D_{j+n}\xi_i^E(\wt\bfu^\beta\circ\Phi_\beta) A_j\nabla u_j\big)
	\cdot \nabla \big(\psi D_{i+n}\xi(\boldsymbol{\xi}^E(\tilde \bfu^\beta))\big)\dxdt  \nonumber\\
		&\quad - \sum_{i=1}^{n}\int_{Q_{\infty,\beta}^{\sm}} \sum_{j=1}^{n}\big(D_j\xi_i^E(\wt\bfu^\beta)f_j(u)+D_{j+n}\xi_i^E(\wt\bfu^\beta)f_j(\tilde u^\beta)\big)  \psi D_i \xi(\boldsymbol{\xi}^E(\tilde \bfu^\beta))\dxdt \nonumber\\
		&\quad - \sum_{i=1}^{n}\int_{Q_{\infty,\beta}^{\sm}} \sum_{j=1}^{n}
		 \big(D_j\xi^E_{i}(\wt\bfu^{\beta}\circ \Phi_\beta) f_j(\tilde u^\beta)
		+D_{j+n}\xi_i^E(\wt\bfu^{\beta}\circ \Phi_{\beta}) f_j(u)\big)
		  \psi D_{i+n} \xi(\boldsymbol{\xi}^E(\tilde \bfu^\beta))\dxdt \nonumber\\
		&\quad + \sum_{i=1}^{n}\int_{0}^\infty\int_{V_\beta\cap \Gamma}
		\sum_{j=1}^n\big(D_j\xi_i^E(\wt \bfu^\beta)r_j^\sm(\bfu) +D_{j+n}\xi_i^E(\wt\bfu^\beta)\circ\Phi_{\beta}\,r_j^{-\sm}(\bfu)\big)
		\psi D_i\xi(\boldsymbol{\xi}^E(\tilde \bfu^\beta))\dSdt  \nonumber\\
			&\quad + \sum_{i=1}^{n}\int_{0}^\infty\int_{V_\beta\cap \Gamma}
			\sum_{j=1}^n
			\big(D_j\xi_i^E(\wt\bfu^\beta\circ\Phi_\beta) \,r_j^{-\sm}(\bfu)+D_{j+n}\xi_i^E(\wt\bfu^{\beta}\circ\Phi_{\beta}) r_j^{\sm}(\bfu)\big)
			\psi D_{i+n}\xi(\boldsymbol{\xi}^E(\tilde \bfu^\beta))\dSdt  \nonumber
		\Bigg| \nonumber\\
		&\le C\|\psi\|_{L^\infty}\|D\xi\|_{L^\infty(\mathbb{R}^{2n})}\sum_{i=1}^{n}\big(\|\mu_i^{E,\beta,+}\|_{\mathcal{M}(\ol Q_{T_\psi,\beta}^+)}+\|\mu_i^{E,\beta,-}\|_{\mathcal{M}(\ol Q_{T_\psi,\beta}^-)}\big) \nonumber.
	\end{align}
	We consider the limit $E\to\infty$ in \eqref{eq:E.renorm}. 
	The convergence of the first line is straightforward thanks to the boundedness of $\xi $ and $\xi_i^E(\wt\bfu^\beta) \to (\wt\bfu^\beta)_i$ a.e.\ as $E\to\infty$. 
	To proceed, we note that for $i=1,\dots,n$
	 \[D_j\xi_i^E(\wt \bfu^\beta)\to\delta_{ij}\qquad\text{ and }\qquad D_{j+n}\xi_i^E(\wt \bfu^\beta)\to0\qquad\text{ as }E\to\infty\]
	 in the a.e.\ sense w.r.t.\ $\mathcal{L}^{1+d}$ and $\mathcal{L}\otimes\mathcal{H}^{d-1}$. Furthermore, since $\supp D\xi\subset[0,\infty)^{2n}$ is bounded, there exists a threshold $\ul E<\infty$ such that $D\xi(\wt\bfu)=0$ for all $\wt\bfu\in [0,\infty)^{2n}$ with $|\wt\bfu|_1\ge\ul E$. Moreover, the definition of $\xi_i^E$ implies that, whenever $|\wt\bfu|_1\ge E'$ and $E\ge E'$,  it holds that
	  $|\boldsymbol{\xi}^{E}(\wt\bfu)|_1\ge E'$. Consequently,  if $E\ge\ul E$, then
	 \begin{align*}
	 D\xi(\boldsymbol{\xi}^E(\wt\bfu))=0\quad\text{for all }\wt\bfu\in[0,\infty)^{2n}\text{ with }|\wt\bfu|_1\ge \ul E.
	 \end{align*}
	  
Using these properties, the passage to the limit $E\to\infty$ in the remaining terms is fairly straightforward.
The second line in~\eqref{eq:E.renorm} can be seen to converge to 
	\[\sum_{i=1}^n\int_{Q_{\infty,\beta}^\sm}A_i\nabla u_i	\cdot \nabla \big(\psi D_i\xi(\wt \bfu^\beta)\big)\dxdt.\]
	Similarly, we obtain that the third line converges to 
	\[\sum_{i=1}^{n}\int_{Q_{\infty,\beta}^\sm}
	 \wt A_i\nabla \tilde u_i^\beta
	\cdot \nabla \big(\psi D_{i+n}\xi(\wt \bfu^\beta)\big)\dxdt=\sum_{i=1}^{n}\int_{Q_{\infty,\beta}^{-\sm}}
	 A_i\nabla u_i\cdot \nabla \big(\psi D_{i+n}\xi(\wt \bfu^\beta)\circ\Phi_\beta\big)\dxdt.\]
	The fourth line converges to 
	\[ - \sum_{i=1}^{n}\int_{Q_{\infty,\beta}^{\sm}} f_i(u)\psi D_i \xi(\wt\bfu^\beta))\dxdt,\]
	while the fifth line converges to
	\[- \sum_{i=1}^{n}\int_{Q_{\infty,\beta}^{\sm}} f_i(u)\circ \Phi^{-1}_\beta	\psi D_{i+n} \xi(\wt\bfu^\beta)\dxdt
	=- \sum_{i=1}^{n}\int_{Q_{\infty,\beta}^{-\sm}} f_i(u)\,(\psi D_{i+n} \xi(\wt\bfu^\beta))\circ\Phi_\beta\dxdt.
	\]
The sixth line converges to
\[\sum_{i=1}^{n}\int_{0}^\infty\!\!\int_{V_\beta\cap \Gamma}
\sum_{j=1}^n\big(D_j\xi_i^E(\wt \bfu^\beta)r_j^\sm(\bfu) +D_{j+n}\xi_i^E(\wt\bfu^\beta)\circ\Phi_{\beta}\,r_j^{-\sm}(u^+,u^-)\big)
\psi D_i\xi(\wt \bfu^\beta))\dSdt,\]
while the seventh line converges to
 \[ \sum_{i=1}^{n}\int_{0}^\infty\!\!\int_{V_\beta\cap \Gamma_\sm}
r_i^{-\sm}(\bfu)\psi D_{i+n}\xi(\wt\bfu^\beta))\dSdt 
=\sum_{i=1}^{n}\int_{0}^\infty\!\!\int_{V_\beta\cap \Gamma_{-\sm}}
r_i^{-\sm}(\bfu)(\psi D_{i+n}\xi(\wt\bfu^\beta))\circ\Phi_\beta\dSdt.\]	
	Combining these limits with the fact that the right-hand side of inequality~\eqref{eq:E.renorm} vanishes as $E\to\infty$ due to~\eqref{eq:meas.E.tail}, we deduce a version of the renormalised formulation~\eqref{eq:def.renorm.c.gen} in the interfacial region (with time integral $\int_0^T$ replaced by $\int_0^\infty$ and without the term involving $\psi(T)$). The exact form~\eqref{eq:def.renorm.c.gen} can be deduced from this version by a standard approximation argument.
	 Moreover, the renormalised formulation away from the interface~\eqref{eq:def.renorm.out.gen} can be deduced as in the bulk case, see~\cite{Fischer_2015}.
	Thus, the candidate $\bfu=(u^+,u^-)$, $u^\pm=(u^\pm_i)_i$ obtained in Lemma~\ref{l:compactness} is a renormalised solutions in the sense of Definition~\ref{def:renormalised.gen}. 
	\smallskip
	
	It remains to show the entropy dissipation inequality~\eqref{eq:edi}. In the approximate entropy inequality~\eqref{eq:edi.approx} in Lemma \ref{lem4.2}, we use the weak convergence $\sqrt{u_{\eps,i}^\sm} \rightharpoonup \sqrt{u_{i}^\sm}$, the pointwise convergence $u_{\eps,i}^\sm \to u_{i}^\sm$ both in $\mathcal{L}^{1+d}$-a.e.\ in $(0,T)\times\Osm$ and $\mathcal{L}\otimes \mathcal{H}^{d-1}$-a.e. in $(0,T)\times\Gamma_\sm$, the non-negativity
	\begin{equation*}
		-\sum_{i=1}^n f_{\eps,i}(u_\eps) \log u_{\eps,i} \ge 0\quad \text{ and } \quad \sum_{\sm\in\{\pm\}}\sum_{i=1}^nr_{\eps,i}^\sm(\bfu_\eps)\log u_{\eps,i}^\sm \ge 0,
	\end{equation*}
	and Fatou's lemma to pass to the limit as $\eps \downarrow 0$ to obtain, for a.e.\ $t>0$,
	\begin{align*}
		H(u(t)) + 4\sum_{\sm\in\{\pm\}}\sum_{i=1}^n\int_{0}^{t}\intOsm \nabla \sqrt{u_{i}^{\sm}}\cdot A_i^\sm\nabla \sqrt{u_{i}^{\sm}}\dd x\dd s
		-\sum_{\sm\in\{\pm\}}\int_{0}^{t}\intOsm \sum_{i=1}^nf_{i}(u)\log u_{i} \dxds&
		\\+\int_{0}^{t}\int_{\Gamma}\sum_{\sm\in\{\pm\}}\sum_{i=1}^n r_{i}^\sm(\bfu)\log{u_{i}^\sm}\dSds&
		\le H(u_0).
	\end{align*}
	This completes the proof of Theorem~\ref{thm:ex}.
\end{proof}

\appendix 

\section{Appendix}

\subsection{Reflection at Lipschitz boundary}\label{ssec:reflection}
Below, we recall a standard local reflection technique at Lipschitz boundaries. For further background and references, see e.g.\ the PhD thesis~\cite{nittka2016elliptic}.
 Let $\Om'\subset\mathbb{R}^d$ be a bounded Lipschitz domain and
let $\Gamma'=\pa\Om'$.
For every $z\in\Gamma'$, there exists an orthogonal matrix $O$, a radius $r > 0$, and a Lipschitz continuous function $\eta: \mathbb{R}^{d-1} \to \mathbb{R}$ such that with
\[
V := \left\{ \left( y, \eta(y) + s \right) : y \in B^{d-1}_r(0)\subset \mathbb{R}^{d-1},\ s \in (-r, r) \right\}
\]
we have
\[
O(\Om_{\sigma}-z) \cap V = \left\{ \left( y, \eta(y) + s \right) : y \in B^{d-1}_r(0),\ s \in (-r, 0) \right\}. 
\]
If $\Om'$ has $C^1$ boundary, then $\eta$ can be choosen to be $C^1$.

To simplify notation, we only discuss the case $O = I$ and $z = 0$. The adaptations needed to treat general $O,z$ are elementary.

Define $\BIL(y, s) := (y, \eta(y) + s)$ for $y \in \mathbb{R}^{d-1}$, $s \in \mathbb{R}$. Then $\BIL$ is a bi-Lipschitz homeomorphism from $B^{d-1}_r(0) \times (-r, r)$ to $V$ with derivative
\[
D\BIL(y, s) = \begin{pmatrix} I & 0 \\ D\eta(y) & 1 \end{pmatrix}, \quad \text{and} \quad D\BIL(y, s)^{-1} = \begin{pmatrix} I & 0 \\ -D\eta(y) & 1 \end{pmatrix}
\]
Importantly,
\[\det D\BIL=1.\]
Define the reflection $\RFL : V \to V$ at $\Gamma'\cap V$ by $\RFL(\BIL(y, s)) := \BIL(y, -s)$.
Then, (a.e., in Lipschitz case)
\[
D\RFL(\BIL(y, s)) = D\BIL(y, -s) \begin{pmatrix} I & 0 \\ 0 & -1 \end{pmatrix} D\BIL(y, s)^{-1} = \begin{pmatrix} I & 0 \\ 2D\eta(y) & -1 \end{pmatrix}.
\]
In particular,
\[\det D\RFL=-1.\]
Note that $\RFL(\RFL(x)) = x$, $D\RFL$ is bounded. Moreover, $D\RFL(y, s)$ does not depend on $s$, whence $D\RFL(\RFL(x)) = D\RFL(x)$, and $D\RFL(x)^{-1} = D\RFL(x)$.

\section*{Acknowledgements}
The authors would like to thank Jan--Frederik Pietschmann for several helpful discussions on this topic. The first author further thanks Virginie Ehrlacher and Annamaria Massimini for valuable discussions on the paper~\cite{CCCE_2024_moving-interface}.

\printbibliography[heading=bibintoc]

\end{document}